\newtheoremstyle{example}
{}                % Space above
{}                % Space below
{\sffamily}        % Theorem body font % (default is "\upshape")
{}                % Indent amount
{\bfseries}       % Theorem head font % (default is \mdseries)
{.}               % Punctuation after theorem head % default: no punctuation
{ }               % Space after theorem head
{}                % Theorem head spec
\newtheorem{theorem}{Theorem}
\newtheorem{lemma}[theorem]{Lemma}
\newtheorem{corollary}[theorem]{Corollary}
\theoremstyle{example}
\newtheorem{example}[theorem]{Example}
\theoremstyle{definition}
\newtheorem{definition}[theorem]{Definition}
\newtheorem{remark}[theorem]{Remark}
\numberwithin{theorem}{section}
\numberwithin{theorem}{section}
\DeclareMathOperator{\Gal}{Gal}
\DeclareMathOperator{\Aut}{Aut}
\DeclareMathOperator{\Disc}{Disc}
\DeclareMathOperator{\ord}{ord}
\newcommand{\co}{\mathcal{O}}
\newcommand{\Z}{\mathbb{Z}}
\newcommand{\R}{\mathbb{R}}
\newcommand{\Q}{\mathbb{Q}}
\newcommand{\C}{\mathbb{C}}
\newcommand{\F}{\mathbb{F}}
\newcommand{\N}{\mathbb{N}}
\newcommand{\p}{\mathfrak{p}}
\newcommand{\triv}{\mathrm{triv}}
\newcommand{\m}{\widetilde{m}}
\title{$S_4$-quartics with Prescribed Norms}
\author{Sebastian Monnet}
\begin{document}

\begin{abstract}
    Given a number field $k$ and a finitely generated subgroup $\mathcal{A} \subseteq k^*$, we study the distribution of $S_4$-quartic extensions of $k$ such that the elements of $\mathcal{A}$ are norms. We show that the density of such extensions is the product of so-called ``local masses'' at every place of $k$. We give these local masses explicitly in almost all cases and give an algorithm for computing the remaining cases.
\end{abstract}

\maketitle
\section{Introduction}
\label{sec-intro}
Fix a number field $k$. An \emph{$S_4$-quartic extension of $k$} is a quartic field extension $K/k$, such that the normal closure $\widetilde{K}$ of $K$ over $k$ has $\Gal(\widetilde{K}/k) \cong S_4$. For a real number $X > 0$, write $N(X)$ for the number of isomorphism classes of $S_4$-quartic extensions $K/k$ such that the norm of the relative discriminant of $K$ over $k$ is at most $X$. Given a finitely generated subgroup $\mathcal{A}$ of the unit group $k^*$, write $N(X;\mathcal{A})$ for the number of such isomorphism classes with $\mathcal{A} \subseteq N_{K/k}(K^*)$. 

\begin{theorem}
    \label{thm-prop-subgroup-between-0-1}
    For every finitely generated subgroup $\mathcal{A}\subseteq k^*$, we have 
    $$
    0 < \lim_{X\to\infty} \frac{N(X;\mathcal{A})}{N(X)} \leq 1,
    $$
    with equality if and only if $\mathcal{A}\subseteq k^{*4}$. 
\end{theorem}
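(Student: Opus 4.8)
The plan is to deduce the statement from the local--global product formula that is the main theorem of the paper. Writing $m_v$ for the local mass governing $N(X)$ at the place $v$ and $m_v(\mathcal A)$ for the analogous mass subject to the constraint that every element of $\mathcal A$ be a local norm, one obtains
\[
\lim_{X\to\infty}\frac{N(X;\mathcal A)}{N(X)} = \prod_v \rho_v(\mathcal A), \qquad \rho_v(\mathcal A) := \frac{m_v(\mathcal A)}{m_v}.
\]
Since $m_v(\mathcal A)$ is the mass of a sub-collection of the local algebras counted by $m_v$, each factor satisfies $0 \le \rho_v(\mathcal A) \le 1$, which already gives the upper bound $\le 1$. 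For strict positivity of each individual factor I would observe that the totally split étale algebra $k_v \times k_v \times k_v \times k_v$ has surjective norm map and carries positive local mass, so every element of $\mathcal A$ is a norm for it; hence $\rho_v(\mathcal A) > 0$ for every $v$.

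The heart of the argument is to show that the infinite product converges to a positive limit, i.e.\ that $\sum_v \bigl(1 - \rho_v(\mathcal A)\bigr) < \infty$. Fix generators $a_1,\dots,a_r$ of $\mathcal A$. At the finitely many places that are archimedean, divide $2$, or divide some $a_i$, the factor $\rho_v(\mathcal A)$ lies in $(0,1]$ and is harmless. At every other place each $a_i$ is a unit, and I claim $1 - \rho_v(\mathcal A) = O(q_v^{-2})$, where $q_v$ is the residue cardinality. The key point --- and the feature distinguishing the $S_4$ setting from the cyclic case, where the analogous density vanishes --- is that the dominant ramified local type, inertia generated by a transposition, has discriminant valuation $1$ and produces an étale algebra of the form $(\text{ramified quadratic}) \times k_v \times k_v$; the two linear factors already make the norm map surjective, so every unit, in particular every $a_i$, is automatically a local norm there. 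Any local type on which some $a_i$ can fail to be a norm must therefore have discriminant valuation at least $2$, and the total mass of such types is $O(q_v^{-2})$. As $\sum_v q_v^{-2} < \infty$, the product converges to a positive value.

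It remains to characterise equality. If $\mathcal A \subseteq k^{*4}$, then each $a_i = b_i^4$ with $b_i \in k$, and since $[K:k]=4$ we have $a_i = N_{K/k}(b_i)$ for every quartic $K$; thus $\mathcal A \subseteq N_{K/k}(K^*)$ for all $K$, so $N(X;\mathcal A) = N(X)$ and the ratio is identically $1$. Conversely, as each factor is positive and $\le 1$, the product equals $1$ only if $\rho_v(\mathcal A) = 1$ for every $v$. Assume $\mathcal A \not\subseteq k^{*4}$, say $a_i \notin k^{*4}$. Because $4 \not\equiv 0 \pmod 8$, the exceptional case of the Grunwald--Wang theorem does not occur, so $a_i \notin (k_v^*)^4$ for some place $v$; at such a $v$ I would exhibit a positive-mass local quartic type --- for instance a totally ramified cyclic quartic, whose norm group is an index-$4$ subgroup of $k_v^*$ --- for which $a_i$ is not a norm. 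This forces $\rho_v(\mathcal A) < 1$ and hence the product to be $< 1$.

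The main obstacle is the quantitative input of the second paragraph: one must check, uniformly in $v$, both that the transposition-ramified algebra has surjective norm and that the remaining, potentially obstructing, local types contribute mass $O(q_v^{-2})$. I expect this to follow from the explicit local masses together with the norm-group computation $k_v^*/N_{K_v/k_v}(K_v^*) \cong G/\bigl(H\,[G,G]\bigr)$, expressed in terms of the decomposition group $G$ and point stabiliser $H$ inside $S_4$, which should be available from the local analysis preceding the statement.
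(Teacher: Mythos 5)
Your proposal is correct and follows essentially the same route as the paper: both pass to the Euler product $\prod_\p m_{\mathcal{A},\p}/m_{1,\p}$, both get convergence to a positive limit from the fact that at all but finitely many primes the only local types that can obstruct are the overramified ones of discriminant valuation $\geq 2$ (the paper's Lemma~\ref{lem-subgroup-counting-function-lim-positive} makes your $O(q_v^{-2})$ bound explicit as $1-q^{-3}$), and both settle the equality case with the non-special case of Grunwald--Wang for $m=4$ together with explicit local norm groups. The one imprecision is your illustrative obstructing type: when $q\equiv 3\pmod 4$ the totally ramified quartics are non-abelian with norm group of index $2$ (Lemma~\ref{lem-norm-group-(1^4)-3mod4}), so a ``totally ramified cyclic quartic'' need not exist, but an obstructing quartic \'etale algebra always does (e.g.\ the unramified quartic, a totally ramified quartic, or $E\times E$ for a suitable quadratic $E$, exactly as in the paper's case analysis), so this is cosmetic rather than a gap.
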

\begin{corollary}
    \label{cor-exist-exts-with-subgroup}
    For every finitely generated subgroup $\mathcal{A}\subseteq k^*$, there are infinitely many $S_4$-quartic extensions $K$ of $k$ with $\mathcal{A}\subseteq N_{K/k}(K^*)$. 
\end{corollary}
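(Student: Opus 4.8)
The plan is to deduce the corollary directly from Theorem \ref{thm-prop-subgroup-between-0-1}, using the fact that the denominator $N(X)$ itself tends to infinity. Write $c = \lim_{X\to\infty} N(X;\mathcal{A})/N(X)$. By the theorem, $c > 0$. Hence for all sufficiently large $X$ we have $N(X;\mathcal{A})/N(X) \geq c/2$, and therefore
$$
N(X;\mathcal{A}) \geq \frac{c}{2}\, N(X)
$$
once $X$ is large enough.

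The remaining ingredient is that $N(X) \to \infty$ as $X \to \infty$. This is a standard consequence of the known asymptotic count of $S_4$-quartic extensions with bounded discriminant: for $k = \Q$ this is Bhargava's theorem giving $N(X) \sim c_0 X$ for an explicit constant $c_0 > 0$, and the analogous lower bound $N(X) \gg X$ holds over a general number field $k$. First I would record this fact with a citation (a far weaker statement, namely merely $N(X) \to \infty$, already suffices), since it is not itself reproved in this paper. Combining it with the displayed inequality yields $N(X;\mathcal{A}) \to \infty$.

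Finally, since $N(X;\mathcal{A})$ counts isomorphism classes of $S_4$-quartic extensions $K/k$ with $\mathcal{A} \subseteq N_{K/k}(K^*)$ and relative discriminant of norm at most $X$, the divergence of this count as $X \to \infty$ forces the set of such isomorphism classes to be infinite, which is exactly the assertion of the corollary. I do not anticipate a genuine obstacle here: all the substance lives in Theorem \ref{thm-prop-subgroup-between-0-1}, and the only external input is the well-known divergence of $N(X)$, so the corollary is essentially a formal consequence of the theorem.
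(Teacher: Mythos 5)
Your proof is correct and is essentially the deduction the paper intends: the corollary is stated without proof as an immediate consequence of Theorem~\ref{thm-prop-subgroup-between-0-1}, namely that a positive proportion of an infinite family is infinite. The one inefficiency is the external citation for $N(X)\to\infty$: the paper's own Theorem~\ref{thm-counting-function-subgroup} combined with Lemma~\ref{lem-subgroup-counting-function-lim-positive} already yields $\lim_{X\to\infty}N(X;\mathcal{A})/X>0$, hence $N(X;\mathcal{A})\gg X$ directly, so no outside input (and no detour through the ratio $N(X;\mathcal{A})/N(X)$) is actually needed.
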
 
Let us unpack the content of Theorem~\ref{thm-prop-subgroup-between-0-1}. It tells us that any finitely generated subgroup $\mathcal{A}\subseteq k^*$ is contained in the norm group of a positive proportion of $S_4$-quartic extensions. Moreover, it tells us that $\mathcal{A}$ can only be contained in 100\% of such norm groups if $\mathcal{A}\subseteq k^{*4}$, in which case $\mathcal{A}\subseteq N_{K/k}(K^*)$ for every $S_4$-quartic extension $K$ of $k$. In other words, if $\mathcal{A}\subseteq N_{K/k}(K^*)$ for almost all $K$, then $\mathcal{A}\subseteq N_{K/k}(K^*)$ for all $K$.

\begin{remark}
    In the special case where $\mathcal{A}$ is generated by one element, Corollary~\ref{cor-exist-exts-with-subgroup} can be obtained by classical methods, using Hilbert's irreducibility theorem. See \cite[Example~1.13]{newton-nf-prescribed} for details.
\end{remark}
In \cite{newton-nf-prescribed}, Frei, Loughran, and Newton considered the same question for abelian extensions with a given Galois group. They used class field theory to solve the problem. In our setting, the extensions are nonabelian, so we are unable to use class field theory in the same way. Our approach is to use a local-global principle called the Hasse norm principle to reduce the global problem to a collection of local conditions. We then apply a result of Bhargava, Shankar, and Wang to count the $S_4$-quartics satisfying this collection of local conditions. Bhargava, Shankar, and Wang's result is also valid for $S_3$-cubics and $S_5$-quintics, and we expect that our methods will generalise to such extensions.

To prove Theorem~\ref{thm-prop-subgroup-between-0-1}, we first obtain the following explicit formula for the density of number fields in question.

\begin{theorem}
    \label{thm-counting-function-subgroup}
    Let $\mathcal{A}\subseteq k^*$ be a finitely generated subgroup. There exist positive rational numbers $m_{\mathcal{A},\p}$, indexed by the primes $\p$ of $k$, such that each $m_{\mathcal{A},\p}$ is defined explicitly in terms of $\mathcal{A}$ and $\p$, and 
    $$
    \lim_{X\to\infty} \frac{N(X;\mathcal{A})}{X} = \frac{1}{2}\operatornamewithlimits{Res}_{s=1}\big(\zeta_k(s)\big) \prod_\p m_{\mathcal{A},\p}. 
    $$
\end{theorem}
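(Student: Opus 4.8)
The plan is to combine three ingredients: the Hasse norm principle for $S_4$-quartics, an explicit local description of the norm conditions, and the counting theorem of Bhargava, Shankar, and Wang. First I would fix generators $a_1,\dots,a_r$ of $\mathcal{A}$, so that $\mathcal{A}\subseteq N_{K/k}(K^*)$ holds if and only if each $a_i\in N_{K/k}(K^*)$. Since the Hasse norm principle holds for every $S_4$-quartic extension $K/k$ (the relevant obstruction group attached to the pair $(S_4,S_3)$ vanishes), each condition $a_i\in N_{K/k}(K^*)$ is equivalent to the purely local condition that $a_i\in N_{K_\p/k_\p}(K_\p^*)$ at every place $\p$, where $K_\p:=K\otimes_k k_\p$ is the associated degree-$4$ étale $k_\p$-algebra. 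Collecting these, $\mathcal{A}\subseteq N_{K/k}(K^*)$ is equivalent to requiring, at each $\p$, that the completion type of $K$ lie in the set $\Sigma_\p=\{\,\text{degree-}4\text{ étale } k_\p\text{-algebras } L:\ \mathcal{A}\subseteq N_{L/k_\p}(L^*)\,\}$. Thus $N(X;\mathcal{A})$ is exactly the count of $S_4$-quartic extensions of bounded relative discriminant whose local algebra lies in $\Sigma_\p$ for every $\p$.

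Next I would analyse the sets $\Sigma_\p$. Each $\Sigma_\p$ is nonempty, since the split algebra $k_\p^{\,4}$ has surjective norm map and so always lies in $\Sigma_\p$; this already forces positivity of the local masses. More importantly, I would show that $\Sigma_\p$ is the full set of degree-$4$ étale algebras for all but finitely many $\p$: at any $\p$ where every $a_i$ is a unit, units are norms from unramified extensions, so the condition is automatic on the unramified algebras, and the only possible failures occur among the higher-discriminant ramified ones. Writing $m_\p$ for the full local mass of degree-$4$ étale algebras and $m_{\mathcal{A},\p}$ for the mass of $\Sigma_\p$, this yields $m_{\mathcal{A},\p}=m_\p\bigl(1+O(\Nm(\p)^{-2})\bigr)$ outside a finite bad set, so $\prod_\p m_{\mathcal{A},\p}$ converges. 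Rationality of each $m_{\mathcal{A},\p}$ follows because $\Sigma_\p$ is cut out by finitely many norm-residue conditions, making its mass a rational sub-sum of Bhargava's rational local mass; the explicit evaluation is deferred to the later explicit and algorithmic computations.

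Finally I would feed these conditions into the theorem of Bhargava, Shankar, and Wang, which counts $S_4$-quartic extensions of $k$ of bounded relative discriminant subject to finitely many local conditions, with leading constant $\tfrac12\,\mathrm{Res}_{s=1}\zeta_k(s)$ times a product of relative local densities over all places (the finitely many archimedean factors being subsumed into the normalisation, leaving the product over finite primes). Imposing the conditions only at a finite truncation $S$ of the places gives the asymptotic $\tfrac12\,\mathrm{Res}_{s=1}\zeta_k(s)\bigl(\prod_{\p\in S} m_{\mathcal{A},\p}\bigr)X$, and letting $S$ exhaust all primes, together with the convergence from the previous step, produces the stated formula.

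The main obstacle, and the step I expect to require the most care, is passing from finitely many to infinitely many local conditions: I must show that restricting at the primes outside $S$ alters the count by an amount that is $o(X)$ uniformly as $S$ grows. I would obtain this from a uniform upper bound for the number of $S_4$-quartics whose relative discriminant is divisible by a large prime power at a prime $\p\notin S$, which controls the total contribution of the algebras excluded from $\Sigma_\p$; combined with the convergence of $\prod_\p m_{\mathcal{A},\p}$, this justifies interchanging the limit in $X$ with the limit over $S$ and hence completes the proof.
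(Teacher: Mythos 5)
Your overall strategy coincides with the paper's: reduce to local conditions via the Hasse norm principle, show the resulting collection of local conditions is well behaved at almost all primes, and feed it into the Bhargava--Shankar--Wang counting theorem. But two points need attention.

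First, the step you single out as the main obstacle --- imposing the conditions only at a finite set $S$ of places and then justifying the interchange of the limit in $X$ with the limit over $S$ by a uniform tail bound --- is precisely the hard uniformity estimate proved by Bhargava, Shankar, and Wang, and as your proposal stands it is asserted rather than supplied, so the argument is incomplete at its most delicate point. The paper sidesteps this entirely by quoting their Theorem~2 in the form that applies directly to an \emph{infinite} collection of local conditions, provided the collection is \emph{acceptable}: for all but finitely many $\p$, the set $\Sigma_\p$ must contain every quartic \'etale algebra $L$ with $v_\p(\Disc(L/k_\p))<2$. Your second paragraph essentially verifies acceptability, though note that your opening claim that $\Sigma_\p$ is the \emph{full} set of \'etale algebras for almost all $\p$ is false (totally ramified quartic extensions exclude any $\alpha$ whose unit part is not a local fourth power), and that besides the unramified algebras you must also handle the algebras with $v_\p(\Disc)=1$, namely those of type $(1^211)$ and $(1^22)$; these are fine because they contain a $k_\p$-factor or have full norm group, respectively, but your ``units are norms from unramified extensions'' argument does not cover them. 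Once acceptability is established, no truncation or limit-interchange argument is needed.

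Second, the parenthetical claim that the archimedean factors are ``subsumed into the normalisation, leaving the product over finite primes'' is wrong: the archimedean masses depend on $\mathcal{A}$ (at a real place the mass is $5/12$ or $7/24$ according to the signs of the generators), so they cannot be absorbed into a universal constant and must appear as genuine factors $m_{\mathcal{A},\p}$ in the Euler product.
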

 
For an element $\alpha \in k^*$, write $N(X;\alpha)$ to mean $N(X;\langle \alpha \rangle)$, where $\langle\alpha\rangle$ is the subgroup of $k^*$ generated by $\alpha$. Equivalently, $N(X;\alpha)$ is the number of $S_4$-quartic extensions $K/k$ with appropriately bounded discriminant such that $\alpha \in N_{K/k}(K^*)$. Similarly, write $m_{\alpha,\p}$ for the number $m_{\langle\alpha\rangle,\p}$ from Theorem~\ref{thm-prop-subgroup-between-0-1}.
\begin{theorem}
    \label{thm-values-of-masses}
    For $\alpha \in k^*$ and finite primes $\p$ of $k$ with odd norm, the values of $m_{\alpha,\p}$ are given by Tables~\ref{table-q-1mod4} and \ref{table-q-3mod4}. If $\p$ is an infinite prime and $f:k \to \C$ is an embedding corresponding to $\p$, then 
    $$
    m_{\alpha,\p} = \begin{cases}
        \frac{5}{12} \quad \text{if $f$ is real and $f(\alpha)>0$},
        \\
        \frac{7}{24} \quad \text{if $f$ is real and $f(\alpha) < 0$},
        \\
        \frac{1}{24} \quad \text{if $f$ is complex.}
    \end{cases}
    $$
    In the special case $k=\Q$ and $\p = (2)$, the values of $m_{\alpha,\p}$ are given by Table~\ref{table-q-2}.
\end{theorem}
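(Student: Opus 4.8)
The plan is to compute each local mass $m_{\alpha,\p}$ directly from its definition as a weighted sum over the isomorphism classes of étale quartic $k_\p$-algebras $K_\p$, in which each algebra contributes $\frac{1}{|\Aut_{k_\p}(K_\p)|}$ times its discriminant weight, and the sum is restricted to those $K_\p$ satisfying the local norm condition $\alpha \in N_{K_\p/k_\p}(K_\p^*)$. The whole calculation then reduces to three tasks at each $\p$: enumerate the étale quartic $k_\p$-algebras, compute their automorphism groups and discriminant valuations, and decide for each one whether $\alpha$ lies in its local norm group. The last task is the heart of the matter and is handled through local class field theory.

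First I would treat the infinite primes, where there is no discriminant weight and the list of algebras is short. Over $\R$ the étale quartic algebras are $\R^4$, $\R^2\times\C$, and $\C^2$, with automorphism groups of orders $24$, $4$, and $8$; over $\C$ the only algebra is $\C^4$, with $|\Aut|=24$. Since the norm from an $\R$-factor is surjective onto $\R^*$ while the norm from a $\C$-factor has image $\R_{>0}$, the condition $\alpha\in N_{K_\p/k_\p}(K_\p^*)$ excludes $\C^2$ exactly when $f(\alpha)<0$ and is otherwise automatic. Summing the reciprocal orders then yields $\frac{1}{24}+\frac{1}{4}+\frac{1}{8}=\frac{5}{12}$ when $f(\alpha)>0$, $\frac{1}{24}+\frac{1}{4}=\frac{7}{24}$ when $f(\alpha)<0$, and $\frac{1}{24}$ at a complex place, matching the stated values.

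For a finite prime $\p$ of odd norm $q$, the key simplification is that every quartic extension of $k_\p$ is tamely ramified, so its Galois closure has metacyclic Galois group, necessarily of type $\Z/4$, $V_4$, or $D_4$, and the étale algebras fall into a short, explicit list indexed by decomposition and ramification type. For each field factor $L/k_\p$ I would compute the local norm group $N_{L/k_\p}(L^*)\subseteq k_\p^*$ via local class field theory, finding that membership of $\alpha$ is governed by congruence conditions on $v_\p(\alpha)$ together with the class of $\alpha$ in $k_\p^*/(k_\p^*)^2$ or $k_\p^*/(k_\p^*)^4$. Since the structure of the quartic quotient depends on whether $\gcd(4,q-1)$ equals $4$ or $2$, the relevant invariants of $\alpha$, and hence the tabulated masses, split along the dichotomy $q\equiv 1$ versus $q\equiv 3\pmod 4$ of Tables~\ref{table-q-1mod4} and~\ref{table-q-3mod4}. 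Assembling the weighted sum over all decomposition types then produces these tables.

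The main obstacle is the wildly ramified case $k=\Q$, $\p=(2)$. Here quartic extensions of $\Q_2$ can be wildly ramified, so the clean tame classification breaks down; instead I would enumerate the étale quartic $\Q_2$-algebras explicitly (by hand or with computer assistance) and, for each, record its automorphism group, discriminant valuation, and norm group $N_{K_\p/\Q_2}(K_\p^*)\subseteq\Q_2^*$. Deciding membership of $\alpha$ now requires working in $\Q_2^*/(\Q_2^*)^2$ and its quartic refinement, which has more classes than in the odd case, and the norm groups of the ramified factors are more delicate to pin down. Carrying this out carefully for every algebra and every class of $\alpha$ is precisely what yields Table~\ref{table-q-2}.
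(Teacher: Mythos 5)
Your proposal is correct and follows essentially the same route as the paper: enumerate the quartic \'etale $k_\p$-algebras (via the short list over $\R$ and $\C$, via tame ramification and splitting symbols for odd $q$, and by computer-assisted enumeration for $\p=(2)$), compute automorphism orders and discriminant weights, determine each norm group by local class field theory, and sum according to the definition of the mass. The only cosmetic difference is that the paper organises the odd-$q$ case by splitting symbol and Bhargava's mass formula rather than by the Galois group of the closure, and it reduces the $2$-adic computation to finitely many cases by noting that units congruent to $1$ modulo $8\pi$ are fourth powers, so that representatives of $\Q_2^*/\Q_2^{*4}$ suffice.
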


We illustrate how to use our results, and in particular how to read Tables~\ref{table-q-1mod4},~\ref{table-q-3mod4}, and~\ref{table-q-2}, with an example. 

\begin{example}
    We will give an exact expression for the proportion of $S_4$-quartic number fields $K/\Q$ such that $2 \in \N_{K/\Q}(K^*)$. Since $2 > 0$, we have 
    $$
    m_{2,\infty} = m_{1,\infty} = \frac{5}{12}. 
    $$
    In Table~\ref{table-q-2}, we have $r = u = 1$, so 
    $$
    m_{2,2} = \frac{6523}{8192}. 
    $$
    For an odd prime $p$, we know that $2$ is a quadratic residue if and only if $p \equiv \pm 1 \pmod{8}$. Therefore, if $p\equiv \pm 3 \pmod{8}$, we have 
    $$
    m_{2,p} = \frac{(4p^2 + 4p + 5)(p-1)}{4p^3}.
    $$
    If $p \equiv 7 \pmod{8}$, then Table~\ref{table-q-3mod4} tells us that 
    $$
    m_{2,p} = \frac{(p^3 + p^2 + 2p + 1)(p-1)}{p^4}.
    $$
    For $p \equiv 1\pmod{8}$, the situation is more complicated, and we have 
    $$
    m_{2,p} = 
    \begin{cases}
        \frac{(p^3 + p^2 + 2p + 1)(p-1)}{p^4} \quad \text{if $2$ is a quartic residue modulo $p$},
        \\
        \frac{(p^2+p +2)(p-1)}{p^3} \quad \text{otherwise}.
    \end{cases}
    $$
    We have 
    $$
    m_{1,p} = \frac{(p^3 + p^2 + 2p + 1)(p-1)}{p^4},
    $$
    for all finite primes (including $2$), and therefore 
    $$
    \frac{m_{2,p}}{m_{1,p}} = \begin{cases}
        1 \quad \text{if $p=\infty$},
        \\
        \frac{6523}{8704} \quad \text{if $p=2$},
        \\
        \frac{4p^3 + 4p^2 + 5p}{4(p^3 + p^2 + 2p + 1)} \quad \text{if $p\equiv \pm 3 \pmod{8}$},
        \\
        1 \quad \text{if $p\equiv 7\pmod{8}$},
        \\
        1 \quad \text{if $p\equiv 1\pmod{8}$ and $2$ is a quartic residue modulo $p$},
        \\
        \frac{p^3 + p^2 + 2p}{p^3 + p^2 + 2p + 1} \quad \text{if $p\equiv 1\pmod{8}$ and $2$ is a quartic nonresidue modulo $p$}
    \end{cases}
    $$
    Finally then, the proportion of $S_4$-quartic extensions $K/\Q$ with $2 \in N_{K/\Q}(K^*)$ is given by the Euler product 
    $$
    \frac{1}{2}\cdot \frac{6523}{8704}\cdot \prod_{p\equiv \pm 3 \pmod{8}} \Big(1 - \frac{3p + 4}{4p^3 + 4p^2 + 8p + 4}\Big)\cdot  \prod_{\substack{p \equiv 1 \pmod{8} \\ \text{$2$ is a quartic nonresidue modulo $p$}}} \Big(1 - \frac{1}{p^3 + p^2 + 2p + 1}\Big),
    $$
    which is approximately equal to $33.06\%$. 
\end{example}
\subsection{Structure of the paper}
In Section~\ref{sec-reducing-to-local}, we explain our approach to the problem. Our first step is to introduce the \emph{Hasse Norm Principle} in Section~\ref{subsec-HNP}, which lets us consider the situation locally at each prime of $k$. Subsequently, in Section \ref{subsec-bhargava-result}, we state Bhargava, Shankar, and Wang's counting result, which requires our local conditions to be ``acceptable''. In Section~\ref{subsec-acceptable}, we demonstrate that our conditions are indeed acceptable, allowing us to prove Theorem~\ref{thm-counting-function-subgroup}.

The goal of Section~\ref{sec-masses} is to prove Theorem~\ref{thm-values-of-masses}. That is, we compute the masses $m_{\alpha,\p}$ explicitly for $\alpha \in k^*$ and primes $\p$ that are either infinite or have odd norm, as well as in the special case $k=\Q$ and $\p=(2)$. In Section~\ref{subsec-strat}, we introduce some notation and outline the strategy of Sections~\ref{subsec-norms}--\ref{subsec-primes-over-2}. Then, in Section~\ref{subsec-norms}, we compute all possible norm groups of several classes of quartic \'etale algebras over $k_\p$, where $\p$ is a finite prime with odd norm. We apply these norm group computations in Section~\ref{subsec-masses} to compute all possible masses $m_{\alpha,\p}$, for finite primes $\p$ with odd norm, as well as for infinite primes. In Section~\ref{subsec-primes-over-2}, we present our MAGMA (\cite{magma}) algorithm to compute masses in general and obtain the values in Table~\ref{table-q-2}. We then assemble our results to prove Theorem~\ref{thm-values-of-masses}.

Finally, Section~\ref{sec-proportion} is dedicated to proving Theorem~\ref{thm-prop-subgroup-between-0-1}. Along the way, we obtain an explicit finite bound on $\lim_{X\to\infty}\frac{N(X;\alpha)}{X}$ for $\alpha \in k^*$, in terms of $\alpha$ and $k$. This bound implies that $\lim_{X\to\infty}\frac{N(X,\mathcal{A})}{X}$ is finite for any finitely generated subgroup $\mathcal{A}\subseteq k^*$, which allows us to obtain an explicit expression for the proportion $\lim_{X\to\infty}\frac{N(X;\mathcal{A})}{N(X)}$. By considering this explicit expression, we prove Theorem~\ref{thm-prop-subgroup-between-0-1}.
\subsection{Acknowledgements}
I would like to thank my supervisor, Rachel Newton, for suggesting the project and for her unwavering support throughout. Jiuya Wang's suggestion to look at \cite{geom-of-nums} was vital to the eventual approach, so I am very grateful to her as well. Thanks also to Ross Patterson for suggesting I use MAGMA to verify my results; my code did actually uncover a mistake, so the check was very useful. Ashvin Swaminathan was very generous in explaining some details of Bhargava's parametrisation of quartic rings, for which I am also grateful. Of course, none of this work would have been possible without my generous funding from the ESPRC, University College London and the Heilbronn Institute for Mathematical Research. 

\subsection{Notation}
We fix the following notation: 
\begin{itemize}
    \item $k$ is a number field,
    \item $\p$ is a prime of $k$,
    \item $\F_\p$ is the residue field $\co_K/\p$, in the case where $\p$ is finite,
    \item $N(\p)$ and $q$ both denote the cardinality of $\F_\p$, in the case where $\p$ is finite,
    \item $K$ is a finite-degree extension of $k$,
    \item $\alpha$ is an element of $k^*$,
    \item $\mathcal{A}$ is a finitely generated subgroup of $K^*$,
    \item $\zeta_m$ is a primitive $m^\mathrm{th}$ root of unity, where $m$ is a positive integer. 
\end{itemize}

\subsection{Detailed version of the paper}
This version of the paper has been optimised for concision. In case the reader is interested in more detailed proofs and worked examples, there is a much longer draft available on the author's website (\url{www.smonnet.com}).

\section{Reducing to Local Conditions}
\label{sec-reducing-to-local}
In this section, we break our global problem into infinitely many local problems. In Section~\ref{subsec-HNP}, we recall the Hasse Norm Principle, which says that $\alpha$ is a norm globally if and only if it is a norm everywhere locally. In Section~\ref{subsec-bhargava-result}, we recall a result of Bhargava, Shankar, and Wang, which allows us to count number fields satisfying a given collection of local conditions. In a probabilistic sense, this counting result essentially says that the local conditions at each prime behave like independent random events. The result also tells us how to compute the ``probabilities'' of these independent events. There is one technical hitch, which is that this result only applies to so-called \emph{acceptable} collections of local conditions. Therefore, we spend Section~\ref{subsec-acceptable} proving that our local conditions are indeed acceptable, so that we can apply Bhargava, Shankar, and Wang's result.  
\subsection{The Hasse Norm Principle}
\label{subsec-HNP}
As usual, let $K/k$ be an extension of number fields and let $\p$ be a prime of $k$. Write $K_\p$ for the \'etale $k_\p$-algebra
$$
K_\p = K \otimes_k k_\p,
$$
which we call the \emph{completion of $K$ at $\p$}. In this paper, we will refer to the following result as ``the Hasse Norm Principle''. This terminology is slightly nonstandard; in general, the Hasse Norm Principle is a local-global principle that may or may not hold in a particular extension. Theorem~\ref{thm-HNP} says that this principle holds for all $S_4$-quartic extensions. 
\begin{theorem}[Hasse Norm Principle]
    \label{thm-HNP}
    Let $K/k$ be an degree $n$ extension whose normal closure has Galois group $S_n$, and let $\alpha \in k^*$. The following are equivalent: 
    \begin{enumerate}
        \item $\alpha \in N_{K_\p/k_\p}(K_\p^*)$ for each prime $\p$ of $k$,
        \item $\alpha \in N_{K/k}(K^*)$. 
    \end{enumerate} 
\end{theorem}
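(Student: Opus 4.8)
The implication $(2)\Rightarrow(1)$ is immediate from functoriality of the norm under base change: if $\alpha = N_{K/k}(\beta)$ with $\beta\in K^*$, then applying $-\otimes_k k_\p$ exhibits $\alpha$ as $N_{K_\p/k_\p}(\beta\otimes 1)$ at every place $\p$. The content is the reverse implication, and the plan is to phrase it in the language of algebraic tori. Let $\widetilde{K}$ be the normal closure, $G=\Gal(\widetilde{K}/k)\cong S_n$, and $H=\Gal(\widetilde{K}/K)\cong S_{n-1}$ the point stabiliser, so $[G:H]=n$. Consider the norm-one torus $T = R^1_{K/k}\mathbb{G}_m := \ker\big(N\colon R_{K/k}\mathbb{G}_m \to \mathbb{G}_m\big)$. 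From the short exact sequence $1\to T\to R_{K/k}\mathbb{G}_m \xrightarrow{N}\mathbb{G}_m\to 1$, together with Hilbert 90 and Shapiro's lemma (which give $H^1(k,R_{K/k}\mathbb{G}_m)=H^1(K,\mathbb{G}_m)=0$, and likewise locally), the long exact cohomology sequence identifies $H^1(k,T)\cong k^*/N_{K/k}K^*$ and $H^1(k_\p,T)\cong k_\p^*/N_{K_\p/k_\p}K_\p^*$. Under these identifications the group of everywhere-local-but-not-global norms is exactly the Tate--Shafarevich group $\mathrm{Sha}^1(k,T)=\ker\big(H^1(k,T)\to\prod_\p H^1(k_\p,T)\big)$, so the theorem is equivalent to the vanishing statement $\mathrm{Sha}^1(k,T)=0$.

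To compute this group I would pass to the dual side. By Poitou--Tate duality for tori, $\mathrm{Sha}^1(k,T)$ is Pontryagin dual to $\mathrm{Sha}^2(k,\widehat{T})$, where $\widehat{T}$ is the character lattice of $T$; concretely $\widehat{T}=\coker\big(\Z\to\Z[G/H]\big)$ for the norm map $1\mapsto\sum_{gH}gH$, a $G$-lattice split by $\widetilde{K}$. Since $\widehat{T}$ is unramified outside a finite set and, by the Chebotarev density theorem, the decomposition groups $G_\p$ realise every cyclic subgroup of $G$ (up to conjugacy) as an unramified Frobenius, a standard inflation--restriction argument shows that $\mathrm{Sha}^2(k,\widehat{T})$ is contained in the purely group-theoretic kernel
$$
\mathrm{Sha}^2_\omega(G,\widehat{T}) = \ker\Big(H^2(G,\widehat{T})\to \prod_{C\leq G \text{ cyclic}} H^2(C,\widehat{T})\Big).
$$
Thus it suffices to prove the combinatorial statement $\mathrm{Sha}^2_\omega(S_n,\widehat{T})=0$, reducing everything to finite group cohomology.

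The remaining step is this $S_n$-computation, which I expect to be where essentially all the work lies. Using the defining sequence $0\to\Z\to\Z[G/H]\to\widehat{T}\to 0$ and Shapiro's lemma ($H^i(G,\Z[S_n/S_{n-1}])\cong H^i(S_{n-1},\Z)$), one reduces to understanding $H^2$ of the permutation lattice $\Z[S_n/S_{n-1}]$ and of the trivial module, together with the restriction maps to cyclic subgroups---which in $S_n$ are generated by products of disjoint cycles and are hence completely explicit. The crux is to show that any class in $H^2(S_n,\widehat{T})$ whose restriction to every cyclic subgroup vanishes is itself zero. This is precisely the content established by Macedo and Newton, who prove that the Hasse norm principle holds for every degree-$n$ extension with Galois group $S_n$; in the case $n=4$ needed here the kernel can also be checked directly from the subgroup lattice of $S_4$. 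The formal reductions of the first two paragraphs are routine, so I would expect the hard part to be this explicit verification that the Shafarevich kernel for $S_n$ vanishes.
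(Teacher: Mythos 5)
Your proposal is correct in outline, but note that the paper does not actually prove this theorem at all: its ``proof'' is a one-line citation of Voskresenski\u{\i} (Corollary to Theorem 4), so what you have written is essentially a reconstruction of the argument behind that citation rather than an alternative to anything in the paper. Your reductions are all sound: the identification $H^1(k,T)\cong k^*/N_{K/k}K^*$ for the norm-one torus $T=R^1_{K/k}\mathbb{G}_m$ via Hilbert 90 and Shapiro, the reformulation of the theorem as $\mathrm{Sha}^1(k,T)=0$, the Poitou--Tate duality with $\mathrm{Sha}^2(k,\widehat{T})$ for $\widehat{T}=\coker(\Z\to\Z[G/H])$, and the Chebotarev/inflation--restriction reduction to the group-theoretic kernel $\mathrm{Sha}^2_\omega(S_n,\widehat{T})$ are all standard and correctly stated. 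You also correctly identify that all the real content sits in the final vanishing $\mathrm{Sha}^2_\omega(S_n,\widehat{T})=0$, which you then defer to Macedo--Newton (for $n=4$ a direct check on the subgroup lattice of $S_4$ is indeed feasible); so in the end your argument, like the paper's, rests on a citation for the crux --- the difference is only that yours makes the intervening machinery explicit, while the paper outsources the whole statement. The one attribution quibble is that the $S_n$ case is classical and due to Voskresenski\u{\i}, which is what the paper cites; Macedo--Newton's explicit methods do recover it, so your reference is legitimate but not the primary one.
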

\begin{proof}
    This is \cite[Corollary~to~Theorem~4]{Voskresenski}. 
\end{proof}
\begin{corollary}
    \label{cor-HNP}
    Let $K/k$ be a degree $n$ extension whose normal closure has Galois group $S_n$, and let $\mathcal{A}\subseteq k^*$ be a finitely generated subgroup. The following are equivalent:
    \begin{enumerate}
        \item $\mathcal{A}\subseteq N_{K_\p/k_\p}(K_\p^*)$ for each prime $\p$ of $k$,
        \item $\mathcal{A}\subseteq N_{K/k}(K^*)$.
    \end{enumerate}
\end{corollary}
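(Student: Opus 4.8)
The plan is to deduce this directly from the single-element Hasse Norm Principle (Theorem~\ref{thm-HNP}) by reducing a statement about the finitely generated subgroup $\mathcal{A}$ to a statement about a finite set of generators. The guiding observation is that for any subgroup $H \leq k^*$, the containment $\mathcal{A} \subseteq H$ holds if and only if $H$ contains a fixed generating set of $\mathcal{A}$, since $H$ is closed under products and inverses.

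First I would fix generators $\alpha_1, \dots, \alpha_r \in k^*$ so that $\mathcal{A} = \langle \alpha_1, \dots, \alpha_r\rangle$. The next step is to record that $N_{K/k}(K^*)$ is a subgroup of $k^*$, being the image of the group homomorphism $N_{K/k} \colon K^* \to k^*$, and similarly that for each prime $\p$ the set $N_{K_\p/k_\p}(K_\p^*)$ is a subgroup of $k_\p^*$, being the image of the norm homomorphism on the unit group of the \'etale algebra $K_\p$. With these subgroup facts in hand, condition~(2) is equivalent to the assertion that $\alpha_i \in N_{K/k}(K^*)$ for every $i$, and condition~(1) is equivalent to the assertion that $\alpha_i \in N_{K_\p/k_\p}(K_\p^*)$ for every $i$ and every $\p$.

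Finally I would apply Theorem~\ref{thm-HNP} to each generator $\alpha_i$ in turn: for each fixed $i$, the theorem gives that $\alpha_i \in N_{K_\p/k_\p}(K_\p^*)$ for all $\p$ if and only if $\alpha_i \in N_{K/k}(K^*)$. Conjoining these equivalences over the finitely many indices $i = 1, \dots, r$ yields exactly the equivalence of conditions~(1) and~(2), completing the argument.

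There is no genuine obstacle here, as the corollary is a formal consequence of the scalar case; the only point requiring a moment's care is the reduction to generators, which rests solely on the fact that the relevant norm images are subgroups rather than arbitrary subsets. I would state that subgroup property explicitly to make the passage from generators to the whole of $\mathcal{A}$ rigorous.
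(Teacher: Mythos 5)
Your argument is correct and is exactly the intended one: the paper gives no explicit proof of this corollary, treating it as an immediate consequence of Theorem~\ref{thm-HNP} via the reduction to generators that you spell out. Your explicit note that the norm images are subgroups (so containment of $\mathcal{A}$ is equivalent to containment of a generating set) is the only point of substance, and you handle it correctly.
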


Let $\p$ be a prime of $k$. Define $\Sigma_{\mathcal{A},\p}$ to be the set of isomorphism classes of quartic \'etale algebras $L/k_\p$ with $\mathcal{A} \subseteq N_{L/k_\p}(L^*)$. Then Corollary~\ref{cor-HNP} tells us that an $S_4$-quartic extension $K/k$ has $\mathcal{A} \subseteq N_{K/k}(K^*)$ if and only if $K_\p \in \Sigma_{\mathcal{A},\p}$ for all $\p$. Given $\alpha\in k^*$, write $\Sigma_{\alpha,\p}$ for $\Sigma_{\langle\alpha\rangle, \p}$, where $\langle\alpha\rangle$ is the subgroup of $k^*$ generated by $\alpha$. 
\subsection{Counting number fields using local conditions}
The Hasse Norm Principle tells us that the number fields $K$ we are looking for are precisely those satisfying a certain collection of local conditions, in the sense of the following definition. 
\label{subsec-bhargava-result}
\begin{definition}
    Let $n$ be a positive integer. A \emph{degree $n$ collection of local conditions on $k$} is a collection $(\Sigma_\p)_\p$, where $\p$ ranges over primes of $k$, and each $\Sigma_\p$ is a set of isomorphism classes of \'etale $k_\p$-algebras of degree $n$.
\end{definition}
\begin{definition}
    An extension $K/k$ \emph{satisfies} a collection of local conditions $(\Sigma_\p)_\p$ if $K_\p \in \Sigma_\p$ for all $\p$. 
\end{definition}
\begin{remark}
    \label{remark-disc-def}
    We will refer extensively to the discriminant $\Disc(L/k_\p)$, where $L$ is an \'etale $k_\p$-algebra. This discriminant has the well-known property that
    $$
    \Disc\big((L_1\times\ldots\times L_r) / k_\p\big) = \prod_{i=1}^r \Disc(L_i/k_\p),
    $$
    for \'etale $k_\p$-algebras $L_i$. 
\end{remark}
\begin{definition}
    A \emph{$p$-adic field} is a finite degree extension of the $p$-adic numbers $\Q_p$. 
\end{definition}
\begin{definition}
    Let $F$ be a $p$-adic field with maximal ideal $P$ and residue field of size $q$. Let $L$ be an \'etale algebra over $F$. Write \emph{$\Disc(L/F)$} for the relative discriminant of $L$ over $F$, and define \emph{$\Disc_P(L/F)$} by
    $$
    \Disc_P(L/F) = q^{v_P(\Disc(L/F))}. 
    $$ 
\end{definition}
\begin{definition}
    Following \cite{geom-of-nums}, a degree $n$ collection of local conditions $(\Sigma_\p)_\p$ is said to be \emph{acceptable} if, for all but finitely many $\p$, the set $\Sigma_\p$ contains every degree $n$ \'etale algebra $L/k_\p$ with $v_{\p}(\Disc(L/k_\p))< 2$.  
\end{definition}
\begin{definition}
    Let $\Sigma = (\Sigma_\p)_\p$ be a degree $4$ collection of local conditions on $k$. For a positive real number $X$, write $N(X;\Sigma)$ for the number of $S_4$-quartic extensions of $k$ satisfying $\Sigma$ such that the norm of the relative discriminant of $K/k$ is at most $X$. 
\end{definition}

\begin{definition}
    \label{def-mass}
    Let $\p$ be a prime of $k$ and let $\Sigma_\p$ be a set of isomorphism classes of \'etale $k_\p$-algebras. Define the \emph{mass of $\Sigma_\p$} to be 
    $$
    m_\p(\Sigma_\p) = \begin{dcases}
        \frac{N(\p) - 1}{N(\p)}\sum_{L \in \Sigma_\p}\frac{1}{\Disc_\p(L/k_\p)} \cdot \frac{1}{\#\Aut(L/k_\p)} \quad \text{if $\p$ is finite},
        \\
        \sum_{L \in \Sigma_\p}\frac{1}{\#\Aut(L/k_\p)}\quad \quad\quad\quad \quad\quad\quad \quad\quad\quad \quad\text{ if $\p$ is infinite}.
    \end{dcases}
    $$
\end{definition}

\begin{theorem}
    \label{thm-bhargava-result}
    Let $\Sigma = (\Sigma_\p)_\p$ be an acceptable collection of local conditions. Then 
    $$
    \lim_{X\to\infty}\frac{N(X;\Sigma)}{X} = \frac{1}{2}\operatornamewithlimits{Res}_{s=1}\big(\zeta_k(s)\big)\prod_\p m_\p(\Sigma_\p). 
    $$
\end{theorem}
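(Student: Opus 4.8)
The plan is to recognize this statement as the specialization to $n=4$ of the main counting theorem of \cite{geom-of-nums}, so that the bulk of the analytic work is imported rather than reproved, and the substance of the proof becomes reconciling our bookkeeping with theirs. Concretely, I would first locate in \cite{geom-of-nums} the theorem that assigns, to an acceptable family of local specifications, the asymptotic count of $S_4$-quartic fields over $k$ of bounded discriminant as a constant times $X$, with that constant presented as a global volume times an Euler product of local densities. The remaining work then splits into three matching tasks: (i) translating ``$S_4$-quartic extensions satisfying $\Sigma$ with $\Nm(\Disc(K/k)) \le X$'' into the orbit-counting problem the reference solves; (ii) identifying each local density with our mass $m_\p(\Sigma_\p)$ of Definition~\ref{def-mass}; and (iii) identifying the global constant with $\tfrac{1}{2}\operatornamewithlimits{Res}_{s=1}\zeta_k(s)$.

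For task (i) I would invoke Bhargava's parametrization: quartic rings together with their cubic resolvent rings correspond to orbits of $G(\co_k)$ on the lattice $V(\co_k)$ of pairs of ternary quadratic forms, where $G = \GL_2 \times \SL_3$ acts in the standard way, and the maximal, irreducible orbits correspond to rings of integers of quartic fields $K$ (with the $S_4$ condition holding generically). Under this dictionary the discriminant of the quartic matches that of the associated point, and imposing $K_\p \in \Sigma_\p$ becomes an open-and-closed $\p$-adic condition on $V(\co_\p)$. Thus $N(X;\Sigma)$ becomes a count of maximal, irreducible $G(\co_k)$-orbits of discriminant bounded by $X$ lying in a prescribed union of $\p$-adic neighbourhoods, which is exactly the shape the geometry-of-numbers machine handles.

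The mechanism of that machine — which I would sketch but not reprove — is to count lattice points in a fundamental domain for the $G$-action intersected with the region of discriminant bounded by $X$, apply Davenport's lemma to replace the count by a volume, and sieve the $\p$-adic conditions place by place. Task (ii) then amounts to Bhargava's local mass formula: the $\p$-adic density of points of $V(\co_\p)$ whose associated \'etale algebra is $L$ is proportional to $\Disc_\p(L/k_\p)^{-1}\#\Aut(L/k_\p)^{-1}$, so summing over $L \in \Sigma_\p$ with the normalizing factor $\tfrac{N(\p)-1}{N(\p)}$ reproduces $m_\p(\Sigma_\p)$; at the infinite places the density reduces to the automorphism-weighted count, matching the archimedean line of Definition~\ref{def-mass}. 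Task (iii) extracts $\operatornamewithlimits{Res}_{s=1}\zeta_k(s)$ from the archimedean volume together with the class-number and unit contributions (one works over a set of ideal-class representatives, and the residue enters through the analytic class number formula), while the factor $\tfrac{1}{2}$ records the residual symmetry normalization in the reference's main term.

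I expect the genuine obstacle — living entirely inside \cite{geom-of-nums}, and which I would only cite — to be the uniform control of the sieve and of the cusp. One must show that points in the cusp of the fundamental domain (the reducible or non-maximal points) contribute negligibly, and, crucially, that the error from truncating the infinitely many local conditions is summable. This last estimate is precisely where acceptability is used: because $\Sigma_\p$ contains every algebra with $v_\p(\Disc) < 2$ for all but finitely many $\p$, the tail of the sieve is governed by points of large $\p$-adic discriminant, whose density decays fast enough in $N(\p)$ for $\prod_\p m_\p(\Sigma_\p)$ to converge and for the interchange of limit and product to be justified. Verifying that our specific $\Sigma_{\mathcal{A},\p}$ satisfy the acceptability hypothesis is deferred to Section~\ref{subsec-acceptable}, so here it may simply be assumed.
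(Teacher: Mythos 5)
Your proposal is correct and matches the paper's approach: the paper's entire proof is the citation ``this is \cite[Theorem~2]{geom-of-nums} in the case $n=4$ and $F$ a number field,'' which is exactly the reduction you make before sketching the internals of that reference. The additional detail you give about the parametrization, the local densities matching Definition~\ref{def-mass}, and the role of acceptability is accurate but is material the paper deliberately leaves inside the citation.
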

\begin{proof}
    This is \cite[Theorem~2]{geom-of-nums}, in the case where $n=4$ and $F$ is a number field. 
\end{proof}

\subsection{Showing that our conditions are acceptable}
In order to apply Theorem~\ref{thm-bhargava-result}, we need to show that the collection $(\Sigma_{\mathcal{A},\p})_\p$ of local conditions is acceptable. 
\label{subsec-acceptable}
We start by defining the ``splitting symbol'' of a prime in an extension, which will allow us to consider extensions $K/k$ separately depending on how the prime $\p$ splits in $K$.
\begin{definition}
    \label{def-splitting-symbol}
    Let $K/k$ be an extension of number fields and let $\p$ be a prime of $k$. Suppose that 
    $$
    \p\co_K = P_1^{e_1}\ldots P_r^{e_r},
    $$
    for distinct prime ideals $P_i$ of $\co_K$. For each $i$, let $f_i$ be the inertia degree of $P_i$ over $\p$. Then the \emph{splitting symbol of $\p$ in $K$} is 
    $$
    (K,\p) = (f_1^{e_1}\ldots f_r^{e_r}). 
    $$
    If $e_i=1$ for some $i$, we suppress the notation and write $f_i$ instead of $f_i^1$. Note that this symbol is only well-defined up to permutation of the indices, so we identify the symbols $(f_1^{e_1}\ldots f_r^{e_r})$ and $(f_{\sigma(1)}^{e_{\sigma(1)}}\ldots f_{\sigma(r)}^{e_{\sigma(r)}})$ for each permutation $\sigma$ of the set $\{1,2,\ldots, r\}$. For example, we say that $(1^22) = (21^2)$. 
\end{definition}

We are interested in the completion $K_\p$ of $K$ at $\p$, which is an \'etale $k_\p$-algebra. There is a natural notion of splitting symbol for \'etale $k_\p$-algebras. We first define this splitting symbol.
\begin{definition}
    Let $F$ be a $p$-adic field with maximal ideal $P$, and let $L/F$ be an \'etale algebra. Then $L \cong L_1\times \ldots \times L_r$ for finite field extensions $L_i/F$. Let $e_i$ and $f_i$ be respectively the ramification index and inertia degree of $L_i$ over $F$. Then the \emph{splitting symbol of $L$} is 
    $$
    (L,P) = (f_1^{e_1}\ldots f_r^{e_r}).
    $$
    Again, we identify splitting symbols related by permutation, in the sense of Definition~\ref{def-splitting-symbol}. 
\end{definition}
It is easy to see that our two definitions of splitting symbol agree, in the sense that 
$$
(K_\p,\p) = (K,\p),
$$
whenever $\p$ is a finite prime of $k$. The following result is standard algebraic number theory. 

\begin{lemma}
    \label{lem-disc-from-symbol}
    Let $E/F$ be an extension of $p$-adic fields, and let $P$ be the maximal ideal of $\co_F$. Let $p$ be the residue characteristic of $F$. Write $e$ and $f$ for the ramification index and inertia degree, respectively, of $E/F$. If $p\nmid e$, then 
    $$
    v_P(\Disc(E/F)) = f(e-1). 
    $$
\end{lemma}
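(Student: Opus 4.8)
The plan is to prove the discriminant formula for a (possibly ramified but tamely ramified) extension $E/F$ of $p$-adic fields by reducing to the computation of the different $\mathfrak{d}_{E/F}$, since the discriminant is the norm of the different. First I would recall the tower of fields $F \subseteq F^{\mathrm{ur}} \subseteq E$, where $F^{\mathrm{ur}}$ is the maximal unramified subextension of $E/F$; it has degree $f$ over $F$ and $E/F^{\mathrm{ur}}$ is totally ramified of degree $e$. Because the different and discriminant are multiplicative in towers, and because unramified extensions contribute nothing to the different (they are étale over the base in the relevant sense, so $\mathfrak{d}_{F^{\mathrm{ur}}/F}$ is the unit ideal), the whole contribution comes from the totally ramified piece $E/F^{\mathrm{ur}}$.

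Next I would analyse the totally ramified extension $E/F^{\mathrm{ur}}$ of degree $e$ with $p \nmid e$. The key structural input is that a tamely and totally ramified extension is generated by a root of an Eisenstein polynomial of the form $x^e - \pi u$ for a uniformiser $\pi$ of $F^{\mathrm{ur}}$ (after adjusting by a unit, tame totally ramified extensions are of Kummer type once enough roots of unity are present, but for the differental computation it suffices to take $E = F^{\mathrm{ur}}(\varpi)$ with $\varpi^e$ a uniformiser). Writing $g(x)$ for the minimal polynomial of a uniformiser $\varpi$ of $E$, the different is generated by $g'(\varpi)$. Since $g(x) = x^e - \pi u$ up to the relevant normalisation, we have $g'(\varpi) = e\varpi^{e-1}$, and here $p \nmid e$ is exactly what guarantees that $e$ is a unit in $\co_E$, so $v_E(g'(\varpi)) = e-1$. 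Hence $\mathfrak{d}_{E/F^{\mathrm{ur}}} = P_E^{e-1}$, where $P_E$ is the maximal ideal of $\co_E$.

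To finish I would pass from the different to the discriminant via the norm: $\Disc(E/F) = \Nm_{F^{\mathrm{ur}}/F}\big(\Disc(E/F^{\mathrm{ur}})\big) \cdot \Disc(F^{\mathrm{ur}}/F)^{[E:F^{\mathrm{ur}}]}$, or more directly use $v_P(\Disc(E/F)) = f \cdot v_{P'}(\Disc(E/F^{\mathrm{ur}}))$ where the factor $f$ records the residue degree when norming down from $F^{\mathrm{ur}}$ to $F$. Combining $v_{P'}(\Disc(E/F^{\mathrm{ur}})) = e-1$ (the discriminant valuation of the totally ramified piece, coming from the different exponent $e-1$ and the fact that the residue extension there is trivial) with the unramified contribution being zero gives $v_P(\Disc(E/F)) = f(e-1)$.

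The main obstacle is being careful with the bookkeeping of valuations under the norm map across the two stages of the tower: one must track how $v_P$ on $F$ relates to the valuation on $E$ and on $F^{\mathrm{ur}}$, since ramification and residue degree both intervene. The cleanest route is to invoke the conductor–discriminant / different machinery (the tame case of the standard formula $v_E(\mathfrak{d}_{E/F}) = e - 1$ when $p \nmid e$) and the multiplicativity of the discriminant in towers, at which point the result is immediate; the only genuinely necessary ingredient beyond standard references is the hypothesis $p \nmid e$, which is precisely what makes $e$ a unit and kills the higher ramification.
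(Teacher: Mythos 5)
Your argument is correct. Note, though, that the paper does not actually prove this lemma: it is introduced with ``The following result is standard algebraic number theory'' and no proof is given, so there is nothing to compare against. Your write-up supplies exactly the standard argument one would cite: split the extension as $F \subseteq F^{\mathrm{ur}} \subseteq E$, observe that the unramified part contributes nothing to the different or discriminant, compute $v_E(\mathfrak{d}_{E/F^{\mathrm{ur}}}) = e-1$ from the derivative of the minimal polynomial of a uniformiser using $p \nmid e$, and norm down, picking up the factor $f$ because $\Nm_{E/F}$ sends the maximal ideal of $\co_E$ to $P^f$. One small simplification: you do not need the Kummer-type normal form $g(x) = x^e - \pi u$. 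For \emph{any} Eisenstein polynomial $g(x) = x^e + a_{e-1}x^{e-1} + \cdots + a_0$ of a uniformiser $\varpi$ of $E$ over $F^{\mathrm{ur}}$, each term $i\,a_i\varpi^{i-1}$ with $i \geq 1$ has $v_E \geq e + i - 1 \geq e$, while $v_E(e\varpi^{e-1}) = e-1$ since $e$ is a unit; so $v_E(g'(\varpi)) = e-1$ directly, and the tameness hypothesis enters only through $e$ being a unit, exactly as you say.
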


\begin{definition}
    Let $\sigma$ be a splitting symbol. We say that $\sigma$ is \emph{overramified} if it is one of $(1^21^2), (2^2)$, and $(1^4)$. 
\end{definition}

\begin{lemma}
    \label{lem-small-disc-implies-not-overram}
    Let $F$ be a $p$-adic field with maximal ideal $P$, such that the residue characteristic is not $2$. Let $L/F$ be a quartic \'etale algebra. If $v_P(\Disc(L/F)) < 2$, then the splitting symbol $(L,P)$ is not overramified. 
\end{lemma}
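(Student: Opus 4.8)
The plan is to prove the contrapositive: I will show that if the splitting symbol $(L,P)$ is one of the three overramified symbols $(1^21^2)$, $(2^2)$, $(1^4)$, then $v_P(\Disc(L/F)) \geq 2$. The whole argument rests on a single structural observation. Since the residue characteristic $p$ is odd by hypothesis, and the only ramification indices appearing in the overramified symbols are $e=2$ and $e=4$, we have $p \nmid e$ in every case. Thus each factor of $L$ is tamely ramified, and Lemma~\ref{lem-disc-from-symbol} applies to compute the discriminant valuation of each factor exactly as $f(e-1)$.

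Concretely, I would write $L \cong L_1 \times \cdots \times L_r$ for finite field extensions $L_i/F$ and use the multiplicativity recalled in Remark~\ref{remark-disc-def}, which gives
$$
v_P(\Disc(L/F)) = \sum_{i=1}^r v_P(\Disc(L_i/F)).
$$
I then treat the three overramified symbols in turn. For $(1^4)$, we have $r=1$ with $e=4$, $f=1$, so Lemma~\ref{lem-disc-from-symbol} yields $v_P(\Disc(L/F)) = 1\cdot(4-1) = 3$. For $(2^2)$, we have $r=1$ with $e=2$, $f=2$, giving $v_P(\Disc(L/F)) = 2\cdot(2-1) = 2$. For $(1^21^2)$, we have $r=2$ with each $L_i$ having $e_i=2$, $f_i=1$, so each factor contributes $1\cdot(2-1)=1$ and the total is $1+1 = 2$.

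In all three cases $v_P(\Disc(L/F)) \geq 2$, which is exactly the contrapositive of the stated implication, so the lemma follows. There is no genuine obstacle here: the content is entirely in recognizing that odd residue characteristic forces tame ramification for the relevant indices, after which the result is a short case check against Lemma~\ref{lem-disc-from-symbol}. The only point requiring a small amount of care is making sure the three overramified symbols are matched correctly to their $(e,f)$ data and that the multiplicativity of the discriminant is invoked for the split symbol $(1^21^2)$.
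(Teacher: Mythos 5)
Your proof is correct and follows exactly the paper's route: pass to the contrapositive, use the multiplicativity of the discriminant from Remark~\ref{remark-disc-def} together with the tame formula $v_P(\Disc) = f(e-1)$ from Lemma~\ref{lem-disc-from-symbol}, noting that odd residue characteristic makes all the relevant ramification tame. The only difference is that you spell out the three case computations explicitly, which the paper leaves implicit.
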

\begin{proof}
    It suffices to show that overramified extensions have $v_P(\Disc(L/F)) \geq 2$. This follows from Lemma~\ref{lem-disc-from-symbol} and Remark~\ref{remark-disc-def}, since the residue characteristic is odd, hence does not divide any of the ramification indices in overramified symbols. 
\end{proof}
We will use the following well-known fact. 
\begin{lemma}
    \label{lem-unram-norm-grp-contains-units}
    Let $E/F$ be an unramified extension of $p$-adic fields. Then 
    $$
    \co_F^* \subseteq N_{E/F}(E^*).
    $$ 
\end{lemma}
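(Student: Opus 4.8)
The plan is to prove the slightly stronger statement that $N_{E/F}(\co_E^*) = \co_F^*$; since $\co_E^* \subseteq E^*$, this immediately gives the lemma. Set $n = [E:F]$ and let $P, Q$ be the maximal ideals of $\co_F, \co_E$, with residue fields $\kappa_F = \co_F/P$ and $\kappa_E = \co_E/Q$. Because $E/F$ is unramified we have $P\co_E = Q$, so $P^m\co_E = Q^m$ for all $m$, and the residue extension $\kappa_E/\kappa_F$ is separable of degree $n$. The norm and trace maps of $E/F$ carry $\co_E$ into $\co_F$ and, upon reduction modulo $P$, induce the norm and trace maps $N_{\kappa_E/\kappa_F}$ and $\Tr_{\kappa_E/\kappa_F}$ on residue fields.

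First I would fix $u \in \co_F^*$ and produce a first approximation to a preimage. The norm map of finite fields $\kappa_E^* \to \kappa_F^*$ is surjective, since both groups are cyclic and the map is a surjective power map; hence there is $x_0 \in \co_E^*$ with $N_{E/F}(x_0) \equiv u \pmod{P}$.

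Next I would improve the approximation inductively. Suppose $x_m \in \co_E^*$ satisfies $N_{E/F}(x_m) \equiv u \pmod{P^{m+1}}$, and write $N_{E/F}(x_m) = u(1 + a)$ with $a \in P^{m+1}$. I look for a correction $x_{m+1} = x_m(1 + y)$ with $y \in Q^{m+1} = P^{m+1}\co_E$. Expanding the norm as the product of conjugates, every symmetric term beyond the linear one involves at least two factors from $Q^{m+1}$ and so lies in $P^{2(m+1)} \subseteq P^{m+2}$, giving $N_{E/F}(1 + y) \equiv 1 + \Tr_{E/F}(y) \pmod{P^{m+2}}$. Since $u$ is a unit, arranging $N_{E/F}(x_{m+1}) \equiv u \pmod{P^{m+2}}$ amounts to solving $\Tr_{E/F}(y) \equiv -a \pmod{P^{m+2}}$ with $y \in Q^{m+1}$. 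This is exactly the surjectivity of $\Tr_{E/F} \colon Q^{m+1}/Q^{m+2} \to P^{m+1}/P^{m+2}$, which, after dividing by a uniformizer of $F$ (also a uniformizer of $E$, as $E/F$ is unramified), is identified with $\Tr_{\kappa_E/\kappa_F} \colon \kappa_E \to \kappa_F$; the latter is surjective because $\kappa_E/\kappa_F$ is separable, so its trace is a nonzero $\kappa_F$-linear functional. This yields $x_{m+1}$ with the desired congruence.

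Finally, the successive corrections satisfy $1 + y \in 1 + Q^{m+1}$, so $(x_m)$ is a Cauchy sequence in the complete local ring $\co_E$ and converges to some $x \in \co_E^*$; by continuity of the norm, $N_{E/F}(x) = u$, as required. The only nonformal input — and hence the main point to isolate cleanly — is the inductive improvement step, which reduces surjectivity of the norm on units to surjectivity of the residue-field trace; the remainder is routine successive approximation in a complete discrete valuation ring.
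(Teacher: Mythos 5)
Your proof is correct. The paper does not actually prove this lemma --- it simply cites the corollary on page 50 of Lang's \emph{Algebraic Number Theory} --- and your successive-approximation argument (surjectivity of the norm on finite residue fields to get a first approximation, then surjectivity of the residue-field trace, via separability, to drive the inductive refinement, then completeness to pass to the limit) is precisely the standard proof of that cited result, with all the key points (the identification $Q^{m+1}=P^{m+1}\co_E$, the estimate $Q^{2(m+1)}\cap\co_F\subseteq P^{m+2}$, and continuity of the norm) handled correctly.
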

\begin{proof}
    This is precisely the corollary on Page 50 of \cite{lang}. 
\end{proof}
\begin{lemma}
    \label{lem-triv-symbols-have-full-norm-group}
    Let $L/k_\p$ be a quartic \'etale algebra such that  
    $$
    (L,\p) \in \{(1111), (112), (13), (1^211), (1^22), (1^31)\}.
    $$
    We have 
    $$
    N_{L/k_\p}(L^*) = k_\p^*. 
    $$
\end{lemma}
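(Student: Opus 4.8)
The plan is to exploit multiplicativity of the norm across a product decomposition. For each listed symbol, write $L \cong L_1 \times \cdots \times L_r$ as the corresponding product of field extensions of $k_\p$ (as in Remark~\ref{remark-disc-def}), so that
$$
N_{L/k_\p}(L^*) = \prod_{i=1}^r N_{L_i/k_\p}(L_i^*).
$$
Since $k_\p^*$ is abelian, the right-hand side is a subgroup of $k_\p^*$, so it suffices to show that this product of subgroups exhausts $k_\p^*$.

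First I would dispatch every symbol except $(1^22)$ at once. Each of $(1111)$, $(112)$, $(13)$, $(1^211)$, and $(1^31)$ contains at least one factor with $f_i = e_i = 1$, that is, a factor $L_i \cong k_\p$. For such a factor the norm map $N_{k_\p/k_\p}$ is the identity, so $N_{L_i/k_\p}(L_i^*) = k_\p^*$ already, and the whole product is forced to equal $k_\p^*$.

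The only genuine case is $(1^22)$, where $L \cong R \times E$ with $R/k_\p$ totally ramified quadratic ($e=2$, $f=1$) and $E/k_\p$ unramified quadratic ($e=1$, $f=2$). Here $N_{L/k_\p}(L^*) = N_{R/k_\p}(R^*)\cdot N_{E/k_\p}(E^*)$. Since $E/k_\p$ is unramified, Lemma~\ref{lem-unram-norm-grp-contains-units} gives $\co_{k_\p}^* \subseteq N_{E/k_\p}(E^*)$, so the product contains every unit. For $R$, I would pick a uniformizer $\pi_R$ and apply the valuation formula $v_\p(N_{R/k_\p}(x)) = f\, v_R(x)$ with $f=1$, giving $v_\p(N_{R/k_\p}(\pi_R)) = 1$, so the product also contains an element of valuation $1$. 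A subgroup of $k_\p^*$ containing all units together with a single element of valuation $1$ is necessarily all of $k_\p^*$, which completes this case.

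The argument is elementary throughout, and I expect the only step with any content to be the $(1^22)$ case. Even there, the work reduces to combining the unit contribution supplied by the unramified factor with an odd-valuation element coming from the ramified factor; neither factor's norm group is all of $k_\p^*$ on its own (each is an index-$2$ subgroup), so the mild subtlety is simply observing that their product is everything.
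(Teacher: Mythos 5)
Your proposal is correct and follows essentially the same route as the paper: every symbol other than $(1^22)$ has a factor isomorphic to $k_\p$, and for $(1^22)$ one combines the units supplied by the unramified quadratic factor (Lemma~\ref{lem-unram-norm-grp-contains-units}) with a valuation-one norm from the totally ramified factor. No gaps.
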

\begin{proof}
    Write $L = L_1\times \ldots \times L_r$ as usual. If $\sigma \neq (1^22)$, then one of the $L_i$ is equal to $k_\p$, so $N_{L/k_\p}(L^*) = k_\p^*$. Suppose that $\sigma = (1^22)$. Then we may assume that $L_1/k_\p$ is quadratic and unramified, and $L_2/k_\p$ is quadratic and totally ramified. Since $L_1/k_\p$ is unramified, Lemma~\ref{lem-unram-norm-grp-contains-units} tells us that the group $N_{L_1/k_\p}(L_1^*)$ contains all units of $\co_{k_\p}$. Since $L_2/k_\p$ is totally ramified, $N_{L_2/k_\p}(L_2^*)$ contains an element of valuation $1$. The result follows. 
\end{proof}
\begin{definition}
In light of Lemma~\ref{lem-triv-symbols-have-full-norm-group}, write \emph{$\triv$} for the set
$$
\triv = \{(1111), (112), (13), (1^211), (1^31), (1^22)\}.
$$
We call the elements of this set the \emph{trivial} splitting symbols. 
\end{definition}
\begin{remark}
    We have chosen to call these symbols trivial because the problem we want to solve, namely determining whether $\mathcal{A} \subseteq N_{L/k_\p}(L^*)$, is trivial whenever $(L,\p) \in \triv$, since the answer is always yes. 
\end{remark}

\begin{lemma}
    \label{lem-acceptable-precursor}
    Let $\alpha \in k^*$, and let $\p$ be a finite prime of $k$ such that $N(\p)$ is odd and $4 \mid v_\p(\alpha)$. Then $\Sigma_{\alpha,\p}$ contains every quartic \'etale algebra $L/k_\p$ with $v_\p(\Disc(L/k_\p)) < 2$. 
\end{lemma}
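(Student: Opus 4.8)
The plan is to classify the splitting symbols of the algebras in question and then dispatch each class using the norm-group lemmas already established. Recall that $\Sigma_{\alpha,\p}$ is the set of quartic \'etale algebras $L/k_\p$ with $\alpha \in N_{L/k_\p}(L^*)$ (since a norm group is a subgroup, $\langle\alpha\rangle \subseteq N_{L/k_\p}(L^*)$ is equivalent to $\alpha \in N_{L/k_\p}(L^*)$), so I must show that every quartic \'etale $L/k_\p$ with $v_\p(\Disc(L/k_\p)) < 2$ has $\alpha$ in its norm group. Since $N(\p)$ is odd, the residue characteristic of $k_\p$ is odd, so Lemma~\ref{lem-small-disc-implies-not-overram} applies and tells us that the splitting symbol $(L,\p)$ is not overramified. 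Enumerating the splitting symbols of quartic \'etale algebras and discarding the three overramified ones $(1^21^2)$, $(2^2)$, $(1^4)$, the only possibilities that remain are the six trivial symbols together with $(22)$ and $(4)$.

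For the trivial symbols there is nothing to do: if $(L,\p) \in \triv$, then Lemma~\ref{lem-triv-symbols-have-full-norm-group} gives $N_{L/k_\p}(L^*) = k_\p^*$, so $\alpha \in N_{L/k_\p}(L^*)$ automatically. It therefore remains to treat the two symbols $(22)$ and $(4)$, and this is where the hypothesis $4 \mid v_\p(\alpha)$ enters. The key observation is that in both cases $L$ is built from unramified factors, so I can combine Lemma~\ref{lem-unram-norm-grp-contains-units} (every unit of $\co_{k_\p}$ is a norm from an unramified extension) with the elementary fact that, for $c$ in the base field, the norm of $c$ from a factor of degree $d$ is $c^d$.

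Concretely, I would fix a uniformiser $\pi$ of $k_\p$ and write $\alpha = \pi^{v_\p(\alpha)} u$ with $u \in \co_{k_\p}^*$. In the case $(4)$, $L$ is the unramified quartic extension of $k_\p$; by Lemma~\ref{lem-unram-norm-grp-contains-units} we have $u \in N_{L/k_\p}(L^*)$, and $N_{L/k_\p}(\pi) = \pi^4$ shows $\pi^4 \in N_{L/k_\p}(L^*)$. Since $4 \mid v_\p(\alpha)$, the element $\pi^{v_\p(\alpha)} = (\pi^4)^{v_\p(\alpha)/4}$ is a norm, and hence so is $\alpha$. In the case $(22)$, we have $L \cong E \times E$ with $E/k_\p$ the unramified quadratic extension; embedding $E^*$ into $L^*$ as the first factor gives $N_{E/k_\p}(E^*) \subseteq N_{L/k_\p}(L^*)$, and the same argument as before (now with $N_{E/k_\p}(\pi) = \pi^2$, using only $2 \mid v_\p(\alpha)$) yields $\alpha \in N_{E/k_\p}(E^*) \subseteq N_{L/k_\p}(L^*)$.

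The only genuinely delicate point is the case $(4)$: this is where the full strength of the hypothesis $4 \mid v_\p(\alpha)$ is needed, since the norm group of the unramified quartic extension consists exactly of the elements whose valuation is divisible by $4$, so the tightest arithmetic constraint in the lemma is forced by this symbol alone. For $(22)$ the weaker condition $2 \mid v_\p(\alpha)$ would already suffice, and the trivial symbols impose no condition at all. Everything else is bookkeeping: confirming the classification of non-overramified quartic splitting symbols and checking that the two lemmas on unramified extensions combine to give the stated containments. I expect no serious obstacle beyond carefully verifying that list of symbols.
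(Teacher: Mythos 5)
Your proposal is correct and follows essentially the same route as the paper: rule out overramified symbols via Lemma~\ref{lem-small-disc-implies-not-overram}, dispatch the trivial symbols with Lemma~\ref{lem-triv-symbols-have-full-norm-group}, and handle $(22)$ and $(4)$ with Lemma~\ref{lem-unram-norm-grp-contains-units} together with $4 \mid v_\p(\alpha)$. Your explicit computation of $N(\pi)$ in the unramified cases, and the observation that only $(4)$ forces the full divisibility condition, merely spell out a step the paper leaves implicit.
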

\begin{proof}
    Let $L/k_\p$ be a quartic \'etale algebra with $v_\p(\Disc(L/k_\p))<2$. By Lemma~\ref{lem-small-disc-implies-not-overram}, the symbol $(L,\p)$ is not overramified, which means that it is either trivial or one of $(22), (4)$. Suppose that $(L,\p)$ is trivial. Then Lemma~\ref{lem-triv-symbols-have-full-norm-group} tells us that that $\alpha \in N_{L/k_\p}(L^*)$. Suppose instead that $(L,\p)$ is one of $(22)$ and $(4)$. Then $N_{L/k_\p}(L^*)$ contains all units by Lemma~\ref{lem-unram-norm-grp-contains-units}. Since $4\mid v_\p(\alpha)$, it follows that $\alpha \in N_{L/k_\p}(L^*)$. In either case, we have shown that $\alpha \in N_{L/k_\p}(L^*)$, so indeed $L \in \Sigma_{\alpha,\p}$. 
\end{proof}
The next result is immediate from the definitions of $\Sigma_{\mathcal{A},\p}$ and $\Sigma_{\alpha,\p}$. 
\begin{lemma}
    \label{lem-sigma-of-subgroup-eq-intersection}
    Let $\mathcal{A}\subseteq k^*$ be a finitely generated subgroup, and let $\alpha_1,\ldots, \alpha_n$ be a finite set of generators for $\mathcal{A}$. Then 
    $$
    \Sigma_{\mathcal{A},\p} = \bigcap_{i=1}^n \Sigma_{\alpha_i,\p}. 
    $$
\end{lemma}
\begin{lemma}
    \label{lem-acceptable}
    For each finitely generated subgroup $\mathcal{A} \subseteq k^*$, the collection of local conditions $(\Sigma_{\mathcal{A},\p})_\p$ is acceptable.
\end{lemma}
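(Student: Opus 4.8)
The plan is to reduce everything to Lemma~\ref{lem-acceptable-precursor} via the intersection formula of Lemma~\ref{lem-sigma-of-subgroup-eq-intersection}, and then observe that the hypotheses of Lemma~\ref{lem-acceptable-precursor} fail for only finitely many primes. First I would fix a finite generating set $\alpha_1,\ldots,\alpha_n$ for $\mathcal{A}$, which exists since $\mathcal{A}$ is finitely generated. By Lemma~\ref{lem-sigma-of-subgroup-eq-intersection} we have $\Sigma_{\mathcal{A},\p}=\bigcap_{i=1}^n \Sigma_{\alpha_i,\p}$ for every prime $\p$, so it suffices to show that for all but finitely many $\p$, \emph{each} set $\Sigma_{\alpha_i,\p}$ contains every quartic \'etale algebra $L/k_\p$ with $v_\p(\Disc(L/k_\p))<2$; the containment then survives passing to the finite intersection.

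Next I would identify the finite set of ``bad'' primes. Lemma~\ref{lem-acceptable-precursor} guarantees the desired containment for $\Sigma_{\alpha_i,\p}$ whenever $N(\p)$ is odd and $4\mid v_\p(\alpha_i)$. The norm $N(\p)$ is even only for the finitely many primes $\p$ lying above $2$. Moreover, for each fixed generator $\alpha_i\in k^*$, the valuation $v_\p(\alpha_i)$ is nonzero for only the finitely many primes $\p$ in the support of the fractional ideal $(\alpha_i)$; at every other prime $v_\p(\alpha_i)=0$, which is certainly divisible by $4$. Hence the set
$$
S=\{\p : \p\mid 2\}\cup \bigcup_{i=1}^n \{\p : v_\p(\alpha_i)\neq 0\}
$$
is a finite union of finite sets, so it is finite.

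Finally I would combine these observations: for every prime $\p\notin S$, the norm $N(\p)$ is odd and $4\mid v_\p(\alpha_i)$ for all $i$, so Lemma~\ref{lem-acceptable-precursor} applies to each $\alpha_i$ and shows that $\Sigma_{\alpha_i,\p}$ contains every quartic \'etale algebra $L/k_\p$ with $v_\p(\Disc(L/k_\p))<2$. Intersecting over $i$ and invoking Lemma~\ref{lem-sigma-of-subgroup-eq-intersection}, the same holds for $\Sigma_{\mathcal{A},\p}$. Since $S$ is finite, this is exactly the definition of acceptability.

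There is no genuine analytic obstacle here; the argument is pure finiteness bookkeeping, and the only thing to be careful about is that the bound on the number of bad primes is uniform across the chosen generators. The mild subtlety worth flagging is that acceptability is a property of $\mathcal{A}$ and not of the particular generating set, so I would note that the conclusion is insensitive to the choice of $\alpha_1,\ldots,\alpha_n$: a different generating set merely changes the finite set $S$, which is harmless.
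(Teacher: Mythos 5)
Your proposal is correct and follows essentially the same route as the paper: reduce to the single-generator case via Lemma~\ref{lem-sigma-of-subgroup-eq-intersection}, apply Lemma~\ref{lem-acceptable-precursor} outside a finite exceptional set, and conclude. The only cosmetic difference is that the paper also throws the (finitely many) infinite primes into the exceptional set $S$, which you should do too since $N(\p)$ and $v_\p$ are not defined there; this does not affect the argument.
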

\begin{proof}
    Take a finite generating set $\alpha_1,\ldots, \alpha_n$ for $\mathcal{A}$, and let $S$ be the set of primes $\p$ of $k$ such that at least one of the following holds: 
    \begin{enumerate}
        \item $N(\p)$ is even,
        \item $\p$ is infinite,
        \item $4\nmid v_\p(\alpha_i)$ for some $i$. 
    \end{enumerate}  
    By Lemma~\ref{lem-acceptable-precursor}, for every prime $\p$ not in $S$, each set $\Sigma_{\alpha_i,\p}$ contains every quartic \'etale algebra $L/k_\p$ with $v_\p(\Disc(L/k_\p)) < 2$. By Lemma~\ref{lem-sigma-of-subgroup-eq-intersection}, it follows that, for every prime $\p$ not in $S$, the set $\Sigma_{\mathcal{A},\p}$ contains every quartic \'etale algebra $L/k_\p$ with $v_\p(\Disc(L/k_\p)) < 2$.
    
    Since $S$ is finite, this means that the collection $(\Sigma_{\mathcal{A},\p})_\p$ of local conditions is acceptable. 
\end{proof}
Write \emph{$m_{\mathcal{A},\p}$} for the mass $m_\p(\Sigma_{\mathcal{A},\p})$. 
\begin{proof}[Proof of Theorem~\ref{thm-counting-function-subgroup}]
    The equality is immediate from Theorem~\ref{thm-bhargava-result} and Lemma~\ref{lem-acceptable}. The numbers $m_{\mathcal{A},\p}$ are positive since $\Sigma_{\mathcal{A},\p}$ contains every extension with trivial splitting symbol, and they are rational by definition of masses in Definition~\ref{def-mass}. 
\end{proof}

\section{Computation of Masses}
\label{sec-masses}
In this section, given an $\alpha \in k^*$, we give explicit values for the masses $m_{\alpha,\p}$, whenever $N(\p)$ is odd. We also present an algorithm for computing these masses in general, which we use to calculate every possible $m_{\alpha,\p}$ in the case $k=\Q$ and $\p=(2)$. In Section~\ref{subsec-strat}, we define some notation and outline our strategy for computing the masses $m_{\alpha,\p}$. In Section~\ref{subsec-norms}, we compute explicitly the norm groups of all possible quartic \'etale algebras over $k_\p$, for $\p$ not lying over $2$. In Section~\ref{subsec-masses}, we compute all possible sets $\Sigma_{\alpha,\p}$, again for $\p$ not lying over $2$, as well as their masses $m_\p(\Sigma_{\alpha,\p})$. Finally, in Section~\ref{subsec-primes-over-2}, we describe our algorithm for computing $m_\p(\Sigma_{\alpha,\p})$ when $\p$ lies over $2$. We conclude the section by assembling the pieces to prove Theorem~\ref{thm-values-of-masses}.
\subsection{Strategy}
\label{subsec-strat}
We start by introducing some auxiliary notation. 
\begin{definition} 
    \label{defi-pre-mass}
    Let $\p$ be a finite prime of $k$, and let $\Sigma_\p$ be a set of quartic \'etale $k_\p$-algebras. Define the \emph{pre-mass of $\Sigma_\p$}, denoted \emph{$\m_\p(\Sigma_\p)$}, by 
$$
\m_\p(\Sigma_\p) = \sum_{L \in \Sigma_\p}\frac{1}{\Disc_\p(L/k_\p)}\cdot \frac{1}{\#\Aut(L/k_\p)}. 
$$
\end{definition} 
The additional notation of pre-mass is perhaps a little cumbersome, but it is a natural quantity to consider and makes the results of the current section easier to state. Of course, the point is that 
$$
m_\p(\Sigma_\p) = \frac{N(\p)-1}{N(\p)}\cdot \m_\p(\Sigma_\p)
$$
for $\p$ and $\Sigma_\p$ as in Definition~\ref{defi-pre-mass}. As such, for finite primes $\p$, we will compute $\m_\p(\Sigma_{\alpha,\p})$, and this information will immediately allow us to compute the values of $m_\p(\Sigma_{\alpha,\p})$.  
\begin{definition}
    Recall that, for $\alpha \in k^*$, we defined $\Sigma_{\alpha,\p}$ to be the set of isomorphism classes of quartic \'etale $k_\p$-algebras $L$ with $\alpha \in N_{L/k_\p}(L^*)$. For a possible splitting symbol $\sigma$, write \emph{$\Sigma_{\alpha,\p}^\sigma$} for the set 
    $$
    \Sigma_{\alpha,\p}^\sigma = \{L \in \Sigma_{\alpha,\p} : (L,\p) = \sigma\}.
    $$
\end{definition}
Since the sets $\Sigma_{\alpha,\p}^\sigma$ partition $\Sigma_{\alpha,\p}$ we have
$$
\m_\p(\Sigma_{\alpha,\p}) = \sum_\sigma \m_\p(\Sigma_{\alpha,\p}^\sigma).
$$
Therefore, it suffices to compute the values of $\m_\p(\Sigma_{\alpha,\p}^\sigma)$ for each $\sigma$. This is more tractable, since there are not many quartic \'etale algebras with a given splitting symbol, so we can consider each one in turn. In the current section, for each $\p$ with $N(\p)$ odd, we explicitly compute the sets $\Sigma_{\alpha,\p}^\sigma$ along with the norm group of each \'etale algebra in $\Sigma_{\alpha,\p}^\sigma$.

\subsection{Norm group computations}
\label{subsec-norms}
Recall that we fixed a number field $k$ and a prime ideal $\p$ of $k$, and that we write $N(\p)$ or $q$ for the size of the residue field $\co_k/\p$.

For each $\sigma$, we compute the norm group of each \'etale $k_\p$-algebra with splitting symbol $\sigma$. Throughout this subsection, $L$ will denote a quartic \'etale $k_\p$-algebra, and $\pi$ will be a uniformiser of $k_\p$. The following lemma is well-known class field theory. 
\begin{lemma}
    \label{lem-index-of-norm-group}
    Let $E/F$ be an extension of $p$-adic fields, and write $E^\mathrm{ab}$ for the largest abelian extension of $F$ contained in $E$. Then 
    $$
    [F^* : N_{E/F}(E^*)]  = [E^\mathrm{ab} : F]. 
    $$
\end{lemma}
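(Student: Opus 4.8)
The plan is to deduce the statement from the norm limitation theorem of local class field theory together with the reciprocity isomorphism for abelian extensions. The key reduction is the claim that $N_{E/F}(E^*) = N_{E^{\mathrm{ab}}/F}((E^{\mathrm{ab}})^*)$; once this is established, local reciprocity applied to the finite abelian extension $E^{\mathrm{ab}}/F$ gives an isomorphism $F^*/N_{E^{\mathrm{ab}}/F}((E^{\mathrm{ab}})^*) \cong \Gal(E^{\mathrm{ab}}/F)$, and comparing cardinalities yields $[F^*:N_{E/F}(E^*)] = [E^{\mathrm{ab}}:F]$, as desired. Since $p$-adic fields have characteristic $0$, every extension in sight is separable, so norms and Galois theory behave as expected.

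One inclusion is formal: the tower $F \subseteq E^{\mathrm{ab}} \subseteq E$ and transitivity of the norm, $N_{E/F} = N_{E^{\mathrm{ab}}/F} \circ N_{E/E^{\mathrm{ab}}}$, give $N_{E/F}(E^*) \subseteq N_{E^{\mathrm{ab}}/F}((E^{\mathrm{ab}})^*)$. First I would record that $N_{E/F}(E^*)$ is an open finite-index subgroup of $F^*$ (it contains $(F^*)^{[E:F]}$, and $F^*/(F^*)^m$ is finite for every $m$), so by the local existence theorem there is a unique finite abelian extension $M/F$ with $N_{M/F}(M^*) = N_{E/F}(E^*)$. The inclusion-reversing correspondence between norm groups and abelian extensions, applied to the displayed inclusion, then forces $E^{\mathrm{ab}} \subseteq M$.

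The heart of the argument is the reverse inclusion $M \subseteq E$. Here I would pass to the compositum $EM/E$, which is abelian because $M/F$ is. Using the functoriality of the local reciprocity map with respect to the norm (equivalently, the description of norm groups under base change), one has $N_{EM/E}((EM)^*) = \{x \in E^* : N_{E/F}(x) \in N_{M/F}(M^*)\}$. Since $N_{M/F}(M^*) = N_{E/F}(E^*)$ by construction, this set is all of $E^*$, so the reciprocity index formula for $EM/E$ gives $[EM:E] = [E^*:N_{EM/E}((EM)^*)] = 1$, whence $M \subseteq E$. As $M/F$ is abelian, maximality of $E^{\mathrm{ab}}$ yields $M \subseteq E^{\mathrm{ab}}$, and therefore $M = E^{\mathrm{ab}}$ and $N_{E/F}(E^*) = N_{E^{\mathrm{ab}}/F}((E^{\mathrm{ab}})^*)$.

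The main obstacle is entirely contained in the deep inputs from local class field theory — the existence theorem, the reciprocity index formula, and the functoriality of the norm-residue symbol — which I would invoke as black boxes from a standard reference such as Serre's \emph{Local Fields} or Neukirch's \emph{Class Field Theory}. Everything else is formal manipulation of norm groups. Indeed, since the statement is precisely the classical norm limitation theorem, an acceptable alternative is simply to cite it directly and then apply the reciprocity isomorphism for $E^{\mathrm{ab}}/F$.
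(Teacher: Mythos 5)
Your proposal is correct. Note that the paper itself offers no proof of this lemma: it simply prefaces the statement with ``The following lemma is well-known class field theory'' and moves on, which is exactly the fallback you mention in your final paragraph. What you have written out is the standard derivation of the norm limitation theorem $N_{E/F}(E^*) = N_{E^{\mathrm{ab}}/F}\big((E^{\mathrm{ab}})^*\big)$ from the existence theorem, the inclusion-reversing correspondence, and the base-change description $N_{EM/E}\big((EM)^*\big) = N_{E/F}^{-1}\big(N_{M/F}(M^*)\big)$, followed by the reciprocity index formula for the abelian extension $E^{\mathrm{ab}}/F$; each step is sound (in particular, openness and finite index of $N_{E/F}(E^*)$ via $(F^*)^{[E:F]}$, and the deduction $[EM:E]=1$ forcing $M \subseteq E$). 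So your argument supplies a complete proof where the paper supplies only a citation-by-reputation; either is acceptable, and yours has the merit of making the dependence on the existence theorem and the functoriality of the reciprocity map explicit.
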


\begin{lemma}
    \label{lem-tot-tamely-ram-exts}
    Let $F$ be a $p$-adic field with residue field of order $q$ and uniformiser $\pi$, and let $e$ be a positive integer coprime\footnote{So that our extensions are tamely ramified.} to $q$. Let $g = \gcd(e, q-1)$. There are $g$ isomorphism classes of totally ramified degree $e$ extensions of $F$, and they are given by 
    $$
    \frac{F[X]}{(X^e - \zeta_{q-1}^j\pi)},\quad j=0,1,\ldots, g-1, 
    $$
    where $\zeta_{q-1}$ is a primitive $(q-1)^\mathrm{st}$ root of unity in $F$. 
\end{lemma}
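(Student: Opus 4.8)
The plan is to classify the totally ramified degree $e$ extensions in two moves: first show that every such extension is \emph{radical}, i.e.\ of the form $F[X]/(X^e-a)$ with $v(a)=1$, and then pin down exactly which two radical extensions are isomorphic. The easy half is that each listed algebra genuinely is a totally ramified degree $e$ extension: the polynomial $X^e-\zeta_{q-1}^j\pi$ has no intermediate coefficients and constant term of valuation $1$, so it is Eisenstein, hence irreducible, and an Eisenstein polynomial always defines a totally ramified extension of degree equal to its degree. So the content is surjectivity of the parametrisation and the count of isomorphism classes.

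For the structural step, I would take an arbitrary totally ramified $E/F$ of degree $e$ and a uniformiser $\pi_E$, and normalise it so that its $e$-th power lands in $F$. Since $E/F$ is totally ramified, $v_E(\pi_E^e)=e=v_E(\pi)$, so $\pi_E^e=\pi w$ for a unit $w\in\co_E^*$; and since the residue field of $E$ is still $\F_q$, the Teichm\"uller lift identifies $\bar w$ with some $\zeta_{q-1}^j\in\mu_{q-1}\subseteq F$. The key input is that the principal units $1+P_E$ form a pro-$p$ group, and because $e$ is coprime to $q$ (hence to $p$) the $e$-th power map is a bijection on them; this lets me write $w\,\zeta_{q-1}^{-j}=v^e$ for some principal unit $v\in E$. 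Replacing $\pi_E$ by $\pi_E/v$ (still a uniformiser) gives a uniformiser whose $e$-th power is exactly $\zeta_{q-1}^j\pi\in F$, so $E\cong F[X]/(X^e-\zeta_{q-1}^j\pi)$. This normalisation is the main obstacle, as it is where tameness is genuinely used.

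For the count, I would use that an $F$-algebra isomorphism $F[X]/(X^e-a_1)\xrightarrow{\ \sim\ }E_2:=F[X]/(X^e-a_2)$ exists precisely when $a_1\in (E_2^*)^e$. Writing $a_i=\zeta_{q-1}^{j_i}\pi$ and using $\alpha_2^e=a_2$ (with $\alpha_2$ a uniformiser of $E_2$) reduces this to the question of whether $\zeta_{q-1}^{\,j_1-j_2}\in(E_2^*)^e$. Here I would compute $(E_2^*)^e\cap\mu_{q-1}$ from the decomposition $E_2^*\cong \alpha_2^{\mathbb Z}\times\mu_{q-1}\times(1+P_{E_2})$: the principal-unit factor is $e$-divisible and contributes nothing, while the $e$-th powers inside the cyclic group $\mu_{q-1}$ of order $q-1$ form the subgroup $\langle\zeta_{q-1}^{g}\rangle$ of index $g=\gcd(e,q-1)$. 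Hence the two extensions are isomorphic iff $g\mid (j_1-j_2)$, so the isomorphism classes are represented exactly by $j=0,1,\ldots,g-1$, giving the claimed count of $g$. The delicate points to get right are the valuation/unit bookkeeping in computing $(E_2^*)^e\cap F^*$ and confirming that distinct residues of $j$ modulo $g$ really do yield non-isomorphic fields, which the index-$g$ computation settles.
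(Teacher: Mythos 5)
Your proof is correct and complete. For comparison: the paper does not argue this lemma directly at all --- its ``proof'' is a one-line citation to Theorem~7.2 of its reference on computing extensions of $p$-adic fields, together with the remark that replacing $X^e+\zeta_{q-1}^j\pi$ by $X^e-\zeta_{q-1}^j\pi$ does not affect the argument. What you have written is, in effect, the standard self-contained argument that the citation encapsulates, and every step checks out: Eisenstein's criterion gives that each listed algebra is a totally ramified degree $e$ field; tameness enters exactly where you say it does, namely that $e$ is invertible on the pro-$p$ group $1+P_E$, so you may rescale a uniformiser $\pi_E$ by a principal unit to arrange $\pi_E^e=\zeta_{q-1}^j\pi$ (using that the residue field of $E$ is still $\F_q$, so the Teichm\"uller representative of the unit part lies in $\mu_{q-1}\subseteq F$); and the isomorphism count correctly reduces to whether $\zeta_{q-1}^{j_1-j_2}\in(E_2^*)^e$, which is legitimate because $X^e-a_1$ is irreducible of degree $[E_2:F]$, so having a root in $E_2$ is equivalent to being isomorphic to $E_2$. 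Your computation $(E_2^*)^e\cap\mu_{q-1}=\langle\zeta_{q-1}^g\rangle$ is also right, resting on $(1+P_{E_2})^e=1+P_{E_2}$ and the directness of the decomposition $E_2^*\cong\alpha_2^{\Z}\times\mu_{q-1}\times(1+P_{E_2})$. The only real difference between the two treatments is that yours is self-contained where the paper defers to the literature; if anything, your writeup makes explicit the one point the paper's citation glosses over, namely why the sign change in the defining polynomial is harmless (it just permutes the classes, since $-1\in\mu_{q-1}$).
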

\begin{proof}
    This is almost the same as \cite[Theorem~7.2]{compute-extensions}. The only difference is that we replace the polynomial $X^e + \zeta_{q-1}^j\pi$ with $X^e - \zeta_{q-1}^j\pi$. This modification does not affect the proof. 
\end{proof}
\begin{definition} 
Retaining the notation of Lemma~\ref{lem-tot-tamely-ram-exts}, we will write \emph{$F\Big(\sqrt[e]{\zeta_{q-1}^j\pi}\Big)$} for the extension 
$$
\frac{F[X]}{(X^e - \zeta_{q-1}^j\pi)},
$$
where formally
$$
\sqrt[e]{\zeta_{q-1}^j\pi} = X + (X^e - \zeta_{q-1}^j\pi).
$$
\end{definition}

\begin{lemma}
    \label{lem-norm-group-(22)}
    If $(L,\p) = (22)$, then 
    $$
    N_{L/k_\p}(L^*) = \{v\pi^{2m} : v \in \co_{k_\p}^*, \quad m \in \Z\}.
    $$
\end{lemma}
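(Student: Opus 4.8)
The plan is to exploit the product structure forced by the splitting symbol $(22)$. First I would observe that $(L,\p) = (22)$ means $L \cong L_1 \times L_2$, where each $L_i/k_\p$ is an unramified quadratic extension (both factors have inertia degree $2$ and ramification index $1$; in fact there is a unique such extension, but we shall not need this). Since $L^* = L_1^* \times L_2^*$ and the norm map on a product algebra is the product of the component norm maps, we have
$$
N_{L/k_\p}(L^*) = N_{L_1/k_\p}(L_1^*)\cdot N_{L_2/k_\p}(L_2^*),
$$
so it suffices to compute the norm group of a single unramified quadratic extension $E/k_\p$ and then multiply the two copies together.

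Next I would compute $N_{E/k_\p}(E^*)$ for $E/k_\p$ unramified of degree $2$. The containment $\{v\pi^{2m} : v \in \co_{k_\p}^*,\ m\in\Z\} \subseteq N_{E/k_\p}(E^*)$ follows from two facts: Lemma~\ref{lem-unram-norm-grp-contains-units} gives $\co_{k_\p}^* \subseteq N_{E/k_\p}(E^*)$, and since $E/k_\p$ is unramified the uniformiser $\pi$ of $k_\p$ remains a uniformiser of $E$, so $N_{E/k_\p}(\pi) = \pi^2$ lies in the norm group. For the reverse containment I would invoke the standard valuation formula $v_\p\big(N_{E/k_\p}(x)\big) = f\cdot v_E(x) = 2\,v_E(x)$, which shows that every norm has even $\p$-valuation. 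Together these give $N_{E/k_\p}(E^*) = \{v\pi^{2m} : v \in \co_{k_\p}^*,\ m \in \Z\}$.

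Finally I would combine the two factors. The set $\{v\pi^{2m} : v\in\co_{k_\p}^*,\ m\in\Z\}$ is already a subgroup of $k_\p^*$, so its product with itself is itself, yielding
$$
N_{L/k_\p}(L^*) = \{v\pi^{2m} : v\in\co_{k_\p}^*,\ m \in \Z\},
$$
as claimed. I do not expect any serious obstacle: the only point requiring mild care is the reverse containment in the single-extension computation, namely confirming that norms from $E$ have even $\p$-valuation, which is precisely the valuation formula for unramified extensions. Everything else is the multiplicativity of the norm over product algebras together with Lemma~\ref{lem-unram-norm-grp-contains-units}.
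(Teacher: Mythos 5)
Your proof is correct and follows the paper's structure almost exactly: decompose $L$ as a product of two unramified quadratic extensions, get the containment $\supseteq$ from Lemma~\ref{lem-unram-norm-grp-contains-units} together with $N_{E/k_\p}(\pi)=\pi^2$, and then establish equality. The one genuine difference is in the final step. The paper closes the argument by citing Lemma~\ref{lem-index-of-norm-group}, i.e.\ the local class field theory fact that $[k_\p^* : N_{E/k_\p}(E^*)] = [E^{\mathrm{ab}}:k_\p] = 2$, and then noting that the candidate subgroup already has index $2$, so the containment must be an equality. You instead prove the reverse containment directly via the valuation formula $v_\p(N_{E/k_\p}(x)) = f\cdot v_E(x) = 2\,v_E(x)$, which shows every norm has even valuation and hence lies in $\{v\pi^{2m}\}$. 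Both routes are valid; yours is more elementary in that it avoids invoking class field theory for this particular symbol, while the paper's index argument is the one that generalises uniformly to the ramified symbols treated later (where a purely valuation-theoretic argument would not pin down the norm group). Your handling of the product structure, including the observation that the candidate set is a subgroup and hence equal to its own square, is also correct.
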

\begin{proof}
    We have $L = L_1 \times L_1$, where $L_1$ is the unique unramified quadratic extension of $k_\p$. By Lemma~\ref{lem-unram-norm-grp-contains-units}, $N_{L_1/k_\p}(L_1^*)$ contains $\co_{k_\p}^*$. We have $N_{L_1/k_\p}(\pi) = \pi^2$, so 
    $$
    N_{L/k_\p}(L^*) = N_{L_1/k_\p}(L_1^*)\supseteq \{v\pi^{2m} : v \in \co_{k_\p}^*, \quad m \in \Z\},
    $$
    and we have equality by Lemma~\ref{lem-index-of-norm-group}. 
\end{proof}
\begin{lemma}
    \label{lem-norm-group-(4)}
    If $(L,\p) = (4)$, then 
    $$
    N_{L/k_\p}(L^*) = \{v\pi^{4m} : v \in \co_{k_\p}^*, \quad m \in \Z\}.
    $$
\end{lemma}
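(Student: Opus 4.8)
The plan is to mirror the structure of the proof of Lemma~\ref{lem-norm-group-(22)}, which handled the $(22)$ case, and adapt it to the totally ramified quartic case. When $(L,\p) = (4)$, the algebra $L$ is itself a single field extension $L/k_\p$ that is totally ramified of degree $4$. So unlike the $(22)$ case, there is no unramified factor to exploit directly, and I must work with the full degree-$4$ totally ramified extension.

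First I would establish the two containments separately. For the inclusion $\supseteq$, I would show that $N_{L/k_\p}(L^*)$ contains every unit of $\co_{k_\p}$ together with $\pi^4$. Since $L/k_\p$ is totally ramified of degree $4$ (and the residue characteristic is odd, hence coprime to $4$, so the extension is tamely ramified), by Lemma~\ref{lem-tot-tamely-ram-exts} I may take $L = k_\p\big(\sqrt[4]{\zeta_{q-1}^j\pi}\big)$ for some $j$. Let $\varpi = \sqrt[4]{\zeta_{q-1}^j\pi}$ be the uniformiser of $L$; then $N_{L/k_\p}(\varpi) = (-1)^{?}\zeta_{q-1}^j\pi$, which has valuation $1$ in $k_\p$, so the norm group contains an element of valuation $1$, and in particular $\pi^4 = N_{L/k_\p}(\varpi)^4 / N_{L/k_\p}(\zeta_{q-1}^j)$ up to a unit lies in the norm group. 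For the units, the cleanest route is to observe that $L$ contains the unramified quadratic (indeed, since $4$ is even, one checks whether $L$ contains an unramified subextension); more robustly, I would instead argue that $N_{L/k_\p}(\co_L^*) = \co_{k_\p}^*$ for a tamely totally ramified extension, using that the norm map on units of a tamely ramified extension is surjective onto $\co_{k_\p}^*$ (this follows because the residue field extension is trivial but the norm on one-units and on the Teichmüller part is surjective in the tame case; concretely $N(\zeta_{q-1}) = \zeta_{q-1}^4$ generates all of $\F_\p^*$ since $\gcd(4,q-1)$-torsion is killed but the full group is hit after combining with one-units). This gives $\{v\pi^{4m} : v \in \co_{k_\p}^*,\ m\in\Z\} \subseteq N_{L/k_\p}(L^*)$.

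For the reverse inclusion $\subseteq$, I would invoke Lemma~\ref{lem-index-of-norm-group}: the largest abelian subextension $L^{\mathrm{ab}}$ of a totally ramified $S_4$-type quartic (more precisely, a generic tamely totally ramified quartic) over $k_\p$ equals $L$ itself when $L/k_\p$ is abelian, but here the relevant point is purely the \emph{index}. Since $[k_\p^* : N_{L/k_\p}(L^*)] = [L^{\mathrm{ab}} : k_\p]$, and the proposed norm group $\{v\pi^{4m}\}$ has index exactly $4$ in $k_\p^*$, it suffices to know $[L^{\mathrm{ab}}:k_\p] = 4$. Because $L/k_\p$ is totally ramified cyclic of degree $4$ (the tame totally ramified extensions of degree $4$ are cyclic, as their Galois closures have cyclic tame inertia), we have $L^{\mathrm{ab}} = L$ and the index is $4$. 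Combined with the $\supseteq$ containment, which already exhibits a subgroup of index $4$, equality follows.

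The main obstacle I anticipate is the surjectivity of the norm map onto $\co_{k_\p}^*$, i.e.\ showing $N_{L/k_\p}(\co_L^*) = \co_{k_\p}^*$ in the tamely totally ramified degree-$4$ case. Unlike the $(22)$ case, I cannot simply cite Lemma~\ref{lem-unram-norm-grp-contains-units}, since $L$ has no unramified factor. Instead I expect to either compute the index directly via Lemma~\ref{lem-index-of-norm-group} and then check that the unit part of the norm group must be \emph{all} of $\co_{k_\p}^*$ (because the valuations already account for the full index $4$, forcing the units to be unconstrained), or to argue the surjectivity on units from the structure of the tame norm map. I consider the index-counting approach the safer one: once I know $[k_\p^* : N_{L/k_\p}(L^*)] = 4$ and that the norm group contains an element of valuation $1$ raised to the fourth power giving valuation $4$ but also contains $\zeta_{q-1}^j\pi$ of valuation $1$, I can pin down both the valuation lattice and the unit part simultaneously. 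I would present the argument in that order to avoid a delicate direct unit computation.
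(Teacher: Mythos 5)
There is a genuine error here: you have misread the splitting symbol. In the paper's convention, $(f_1^{e_1}\ldots f_r^{e_r})$ records inertia degrees $f_i$ with ramification indices $e_i$ as exponents, and an omitted exponent means $e_i = 1$. So $(4)$ denotes the \emph{unramified} quartic field extension of $k_\p$ (inertia degree $4$, ramification index $1$); the totally ramified quartic case is the symbol $(1^4)$, which the paper treats separately in Lemmas~\ref{lem-norm-group-(1^4)-1mod4} and~\ref{lem-norm-group-(1^4)-3mod4}. Your entire argument is aimed at the wrong \'etale algebra. The intended proof is the direct analogue of Lemma~\ref{lem-norm-group-(22)}: $L$ is the unramified quartic extension, so Lemma~\ref{lem-unram-norm-grp-contains-units} gives $\co_{k_\p}^* \subseteq N_{L/k_\p}(L^*)$, the norm of $\pi$ is $\pi^4$, hence the norm group contains $\{v\pi^{4m}\}$, and since this subgroup has index $4 = [L:k_\p] = [L^{\mathrm{ab}}:k_\p]$ (unramified extensions are cyclic), Lemma~\ref{lem-index-of-norm-group} forces equality.

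Beyond the misreading, several of the assertions you make about the totally ramified case are themselves false, which is worth flagging because they would sink the argument even if $(4)$ meant what you thought. The norm map on units of a totally tamely ramified degree-$e$ extension is \emph{not} surjective onto $\co_{k_\p}^*$: its image is $\mu_{q-1}^{e}\cdot U^{(1)}_{k_\p}$, of index $\gcd(e,q-1)$, so for $e=4$ and $q\equiv 1\pmod 4$ the unit part of the norm group is $\co_{k_\p}^{*4}$ rather than $\co_{k_\p}^*$ (consistent with Lemma~\ref{lem-norm-group-(1^4)-1mod4}, where the answer is $\{v^4(-\zeta_{q-1}^j\pi)^m\}$, a group that does not contain all units). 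Likewise, tame totally ramified quartics are not always cyclic: $k_\p(\sqrt[4]{\pi})/k_\p$ is Galois only when $\zeta_4 \in k_\p$, i.e.\ $q\equiv 1 \pmod 4$; when $q \equiv 3 \pmod 4$ the maximal abelian subextension is quadratic and the norm group has index $2$, as in Lemma~\ref{lem-norm-group-(1^4)-3mod4}. In short, the claimed identity $N_{L/k_\p}(L^*) = \{v\pi^{4m} : v\in\co_{k_\p}^*\}$ is simply not true for totally ramified $L$, and the fact that your route appeared to reach it should have been a warning sign.
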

\begin{proof}
    The proof is essentially the same as that of Lemma~\ref{lem-norm-group-(22)}. 
\end{proof}
\begin{lemma}
    \label{lem-norm-group-(1^21^2)}
    If $N(\p)$ is odd and $(L,\p) = (1^21^2)$, then the possibilities for $L$, and its norm group, are as follows.
    \begin{center}
    \begin{tabular}{|c|c|}
        \hline
        $L$ & $N_{L/k_\p}(L^*)$
        \\
        \hline 
        \hline 
        $k_\p(\sqrt{\pi}) \times k_\p(\sqrt{\pi})$ & $\{v^2(-\pi)^m : v \in \co_{k_\p}^*, m \in \Z\}$
        \\
        \hline 
        $k_\p(\sqrt{\pi}) \times k_\p(\sqrt{\zeta_{q-1}\pi})$ & $k_\p^*$
        \\
        \hline 
        $k_\p(\sqrt{\zeta_{q-1}\pi}) \times k_\p(\sqrt{\zeta_{q-1}\pi})$ & $\{v^2(-\zeta_{q-1}\pi)^m : v \in \co_{k_\p}^*, m \in \Z\}$
        \\
        \hline 
    \end{tabular}
\end{center}
\end{lemma}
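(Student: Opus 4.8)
The plan is to reduce the whole computation to the norm group of a single totally ramified quadratic extension, and then to assemble the three rows of the table by taking products.

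First I would enumerate the factors using Lemma~\ref{lem-tot-tamely-ram-exts}. Since $N(\p) = q$ is odd we have $\gcd(2, q-1) = 2$, so there are exactly two totally ramified quadratic extensions of $k_\p$, namely $k_\p(\sqrt{\pi})$ and $k_\p(\sqrt{\zeta_{q-1}\pi})$. A quartic \'etale algebra $L$ with $(L,\p) = (1^21^2)$ is a product $L = L_1\times L_2$ of two totally ramified quadratic extensions, and up to isomorphism the unordered pair $\{L_1, L_2\}$ runs over exactly the three possibilities listed in the table.

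Second, for each quadratic extension $E = k_\p(\sqrt{d})$ with $d \in \{\pi, \zeta_{q-1}\pi\}$ I would compute $N_{E/k_\p}(E^*)$. Since $E/k_\p$ is abelian, Lemma~\ref{lem-index-of-norm-group} gives $[k_\p^* : N_{E/k_\p}(E^*)] = 2$. The norm group visibly contains every square (as $x^2 = N_{E/k_\p}(x)$ for $x \in k_\p^* \subseteq E^*$) and contains $N_{E/k_\p}(\sqrt{d}) = -d$, the constant term of $X^2 - d$. It therefore contains the explicit set $\{v^2(-d)^m : v \in \co_{k_\p}^*, m \in \Z\} = \co_{k_\p}^{*2}\langle -d\rangle$. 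I would then check that this explicit set has index exactly $2$ in $k_\p^*$: because $q$ is odd, $\co_{k_\p}^*/\co_{k_\p}^{*2} \cong \Z/2$ with $\zeta_{q-1}$ a non-square representative, and the assignment $\pi^n w \mapsto \chi\big((-1)^n w\big)$ (with $\chi$ the quadratic residue character on the residue field) is a surjective homomorphism $k_\p^* \to \{\pm 1\}$ whose kernel is precisely $\co_{k_\p}^{*2}\langle -d\rangle$. Two index-$2$ subgroups, one contained in the other, must coincide, so $N_{E/k_\p}(E^*) = \{v^2(-d)^m\}$. This yields rows~1 and~3, using that $N_{(L_1\times L_1)/k_\p}\big((L_1\times L_1)^*\big) = N_{L_1/k_\p}(L_1^*)$.

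Third, for the mixed algebra in row~2 I would use that $N_{(L_1\times L_2)/k_\p}(L^*) = N_{L_1/k_\p}(L_1^*)\cdot N_{L_2/k_\p}(L_2^*)$ as subgroups of $k_\p^*$. The two factors are distinct index-$2$ subgroups, since the norm group determines the quadratic extension and $\pi$, $\zeta_{q-1}\pi$ lie in different classes of $k_\p^*/k_\p^{*2}$; hence their product is all of $k_\p^*$. Concretely, the product contains $-\pi$ and $-\zeta_{q-1}\pi$, hence their quotient $\zeta_{q-1}$ and then $\pi$, so it contains every unit and every power of a uniformiser. This gives $N_{L/k_\p}(L^*) = k_\p^*$, as claimed. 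The whole argument is routine class field theory; the only point needing care is verifying that the explicitly written sets are subgroups of index exactly $2$, which is where I expect the (mild) main obstacle to lie, and which is precisely where odd residue characteristic is used, both to make $\co_{k_\p}^*/\co_{k_\p}^{*2} \cong \Z/2$ and to ensure $\zeta_{q-1}$ is a non-square, while keeping track of the sign in $N_{E/k_\p}(\sqrt d) = -d$.
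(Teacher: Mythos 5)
Your proof follows the same route as the paper's: enumerate the two totally ramified quadratic extensions via Lemma~\ref{lem-tot-tamely-ram-exts}, observe that each norm group contains the explicit set $\co_{k_\p}^{*2}\langle -d\rangle$ (which has index $2$), conclude equality from Lemma~\ref{lem-index-of-norm-group}, and obtain the table by multiplying the factors' norm groups. One small correction: the character $\pi^n w\mapsto\chi\big((-1)^n w\big)$ has kernel $\co_{k_\p}^{*2}\langle -\pi\rangle$ in both cases, so for $d=\zeta_{q-1}\pi$ you must twist it to $\pi^n w\mapsto\chi\big((-\zeta_{q-1})^n w\big)$ (or just note that $-d\notin k_\p^{*2}$ while $[k_\p^*:k_\p^{*2}]=4$); the conclusion that each explicit set has index exactly $2$ is unaffected.
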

\begin{proof}
    Since $N(\p)$ is odd, Lemma~\ref{lem-tot-tamely-ram-exts} tells us that there are exactly two totally ramified quadratic extensions of $k_\p$, namely $k_\p(\sqrt{\pi})$ and $k_\p(\sqrt{\zeta_{q-1}\pi})$. Therefore, the possibilities for $L$ are indeed those listed in the table. It is easy to see that, for $j=0,1$, we have 
    $$
    N_{k_\p\big(\sqrt{\zeta_{q-1}^j\pi}\big)/k_\p}\Big(k_\p\Big(\sqrt{\zeta_{q-1}^j\pi}\Big)^*\Big) \supseteq \{v^2(-\zeta_{q-1}^j\pi)^m : v \in \co_{k_\p}^*, \quad m \in \Z\},
    $$
    and Lemma~\ref{lem-index-of-norm-group} tells us that we have equality in both cases. The result then follows from the fact that 
    $$
    N_{L/k_\p}(L^*) = N_{L_1/k_\p}(L_1^*)\cdot N_{L_2/k_\p}(L_2^*),
    $$
    where $L_1$ and $L_2$ are the quadratic extensions of $k_\p$ with $L = L_1 \times L_2$. 
\end{proof}

\begin{lemma}
    \label{lem-norm-group-(2^2)}
    If $N(\p)$ is odd and $(L,\p) = (2^2)$, then the possibilities for $L$, and the corresponding groups $N_{L/k_\p}(L^*)$, are given by the following table:
    \begin{center}
        \begin{tabular}{|c|c|}
            \hline
            $L$ & $N_{L/k_\p}(L^*)$
            \\
            \hline 
            \hline 
            $k_\p(\sqrt{\zeta_{q-1}}, \sqrt{\pi})$ & $\{v^2\pi^{2m} : v \in \co_{k_\p}^*, m \in \Z\}$
            \\
            \hline 
            $k_\p(\sqrt{\zeta_{q-1}}, \sqrt{\zeta_{q^2-1}\pi})$ & $\{v^2\zeta_{q-1}^m\pi^{2m} : v \in \co_{k_\p}^*, m \in \Z\}$
            \\
            \hline 
        \end{tabular}
    \end{center}
\end{lemma}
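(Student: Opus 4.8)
The plan is to exploit the tower $k_\p \subseteq E \subseteq L$, where $E$ is the maximal unramified subextension of $L$, and to compute $N_{L/k_\p}(L^*)$ by transitivity of the norm, reusing the totally ramified computation already carried out in the proof of Lemma~\ref{lem-norm-group-(1^21^2)}.

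First I would classify the algebras. Since $(L,\p)=(2^2)$, the algebra $L$ is a quartic field extension of $k_\p$ with inertia degree and ramification index both equal to $2$. Its maximal unramified subextension $E/k_\p$ is therefore the unramified quadratic extension of $k_\p$, and $L/E$ is totally ramified of degree $2$. As $N(\p)=q$ is odd, $\zeta_{q-1}$ is a non-square unit, so $E = k_\p(\sqrt{\zeta_{q-1}})$. The residue field of $E$ has order $q^2$ and $\pi$ remains a uniformiser of $E$, so applying Lemma~\ref{lem-tot-tamely-ram-exts} over $E$ with $e=2$ (and $\gcd(2,q^2-1)=2$) yields exactly the two totally ramified quadratic extensions $E(\sqrt{\pi})$ and $E(\sqrt{\zeta_{q^2-1}\pi})$, where $\zeta_{q^2-1}$ is a primitive $(q^2-1)$-st root of unity in $E$. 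These are the two algebras in the table.

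Next I would compute the norm group through the tower, using $N_{L/k_\p} = N_{E/k_\p}\circ N_{L/E}$, so that $N_{L/k_\p}(L^*) = N_{E/k_\p}\big(N_{L/E}(L^*)\big)$. Writing $L = E(\sqrt{\theta})$ with $\theta \in \{\pi,\ \zeta_{q^2-1}\pi\}$, the computation in the proof of Lemma~\ref{lem-norm-group-(1^21^2)}, applied to the totally ramified quadratic extension $L/E$, gives $N_{L/E}(L^*) = \{w^2(-\theta)^m : w \in \co_E^*,\ m \in \Z\}$. I then push this forward under $N_{E/k_\p}$, using three facts: that $N_{E/k_\p}$ maps $\co_E^*$ onto $\co_{k_\p}^*$ (by Lemma~\ref{lem-unram-norm-grp-contains-units} together with the valuation identity $v_\p(N_{E/k_\p}(x)) = 2\,v_E(x)$), so that $N_{E/k_\p}(\{w^2\}) = (\co_{k_\p}^*)^2$; that $N_{E/k_\p}(\pi) = \pi^2$ and $N_{E/k_\p}(-1)=1$; and that $N_{E/k_\p}(\zeta_{q^2-1}) = \zeta_{q-1}$, which holds because $N_{E/k_\p}(\zeta_{q^2-1}) = \zeta_{q^2-1}^{\,1+q}$ has order $q-1$, i.e. is a primitive $(q-1)$-st root of unity. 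Combining these gives $N_{E/k_\p}(-\pi) = \pi^2$ and $N_{E/k_\p}(-\zeta_{q^2-1}\pi) = \zeta_{q-1}\pi^2$, whence $N_{L/k_\p}(L^*)$ equals $\{v^2\pi^{2m} : v \in \co_{k_\p}^*,\ m \in \Z\}$ in the first case and $\{v^2\zeta_{q-1}^m\pi^{2m} : v \in \co_{k_\p}^*,\ m \in \Z\}$ in the second, exactly as claimed. Because the norm is multiplicative through the tower, these are genuine equalities, and no separate index computation via Lemma~\ref{lem-index-of-norm-group} is required.

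The main obstacle I anticipate is the bookkeeping of roots of unity under $N_{E/k_\p}$: one must fix compatible choices of $\zeta_{q-1}$ and $\zeta_{q^2-1}$ and verify that the norm of a generator of $\mu_{q^2-1}$ is the intended generator $\zeta_{q-1}$ of $\mu_{q-1}$, while tracking the signs introduced by the $-\theta$ term. A secondary point needing care is confirming that $E(\sqrt{\pi})$ and $E(\sqrt{\zeta_{q^2-1}\pi})$ are distinct as $k_\p$-algebras, since a priori the nontrivial automorphism of $E/k_\p$ could identify them; this is ruled out by checking that the Frobenius twist of $\zeta_{q^2-1}$ remains a non-square in $E^*$, so that no $k_\p$-isomorphism restricting to Frobenius on $E$ can exist.
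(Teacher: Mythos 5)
Your proposal is correct. The classification step is identical to the paper's: both of you apply Lemma~\ref{lem-tot-tamely-ram-exts} to the maximal unramified subextension $E=k_\p(\sqrt{\zeta_{q-1}})$ to get the two algebras $E(\sqrt{\zeta_{q^2-1}^j\pi})$, $j=0,1$. Where you genuinely diverge is in how equality of the norm group is established. The paper proves only the containment $\supseteq$ directly (noting $\co_{k_\p}^{*}$ becomes a square in $E$, so $\co_{k_\p}^{*2}$ consists of norms, and that $\zeta_{q-1}^j\pi^2$ is the norm of the uniformiser), and then clinches equality with the index formula of Lemma~\ref{lem-index-of-norm-group} applied to the quartic extension $L/k_\p$ --- which tacitly requires checking that $L/k_\p$ is abelian, so that $[L^{\mathrm{ab}}:k_\p]=4$. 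You instead compute $N_{L/E}(L^*)$ exactly (reusing the ramified-quadratic computation over $E$, where Lemma~\ref{lem-index-of-norm-group} is applied only to a quadratic, hence automatically abelian, extension) and push the result forward under $N_{E/k_\p}$ using surjectivity of the norm on units for unramified extensions and $N_{E/k_\p}(\zeta_{q^2-1})=\zeta_{q^2-1}^{1+q}$. This buys you an equality with no index count at the quartic level and no need to verify that $L/k_\p$ is Galois; the cost is the root-of-unity and sign bookkeeping you flag, which you handle correctly (and note that the resulting set $\{v^2\zeta_{q-1}^m\pi^{2m}\}$ is independent of which primitive $(q-1)$-st root of unity the norm lands on, since any two differ by an odd power, hence by a square times the original). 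Your worry about the two algebras being identified by a $k_\p$-isomorphism covering Frobenius is legitimate and not addressed in the paper, though it also follows a posteriori from the fact that the two computed norm groups differ (as $\zeta_{q-1}\notin\co_{k_\p}^{*2}$).
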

\begin{proof}
    By applying Lemma~\ref{lem-tot-tamely-ram-exts} to the maximal unramified subextension 
    $$
    L^{ur} = k_\p(\sqrt{\zeta_{q-1}})
    $$ 
    of $L$, we see that $L$ is one of
    $$
    L_j = L^{ur}\Big(\sqrt{\zeta_{q^2-1}^j\pi}\Big), \quad j=0,1. 
    $$
    Every element of $\co_{k_\p}^*$ is square in $L^{ur}$, and it follows that $\co_{k_\p}^{*2} \subseteq N_{L_j/k_\p}(L_j^*)$ for each $j$. We have $\pi^2\zeta_{q-1}^j \in N_{L_j/k_\p}(L_j^*)$ for $j=0,1$, which means that 
    $$
    \{v^2\zeta_{q-1}^{jm}\pi^{2m}: v \in \co_{k_\p}^*, \quad m \in \Z\} \subseteq N_{L_j/k_\p}(L_j^*),\quad j=0,1.
    $$
    Finally, Lemma~\ref{lem-index-of-norm-group} tells us that these inclusions are in fact equalities. 
\end{proof}
\begin{lemma}
    \label{lem-norm-group-(1^4)-1mod4}
    Let $(L,\p) = (1^4)$ and $q\equiv 1\pmod{4}$. Up to isomorphism, there are four distinct possibilities for $L$, given by $L_j = k_\p\Big(\sqrt[4]{\zeta_{q-1}^j\pi}\Big)$ for $j \in \{0,1,2,3\}$, and
    $$
    N_{L_j/k_\p}(L_j^*) = \{v^4(-\zeta_{q-1}^j\pi)^m : v \in \co_{k_\p}^*, m \in \Z\}.
    $$
\end{lemma}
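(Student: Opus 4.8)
The plan is to follow the same two-step template as the preceding norm-group lemmas: first exhibit the claimed set as a subgroup of the norm group, then show it already has the correct index in $k_\p^*$ via Lemma~\ref{lem-index-of-norm-group}, forcing equality. The enumeration of the $L_j$ is immediate from Lemma~\ref{lem-tot-tamely-ram-exts} applied with $e = 4$: since $N(\p) = q$ is odd, the residue characteristic does not divide $4$, so the extensions are tamely ramified, and because $q \equiv 1 \pmod 4$ we have $g = \gcd(4, q-1) = 4$. Hence there are exactly the four totally ramified quartic extensions $L_j = k_\p(\sqrt[4]{\zeta_{q-1}^j\pi})$, $j \in \{0,1,2,3\}$, as claimed.

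For the containment, write $\beta_j = \zeta_{q-1}^j\pi$ and $\theta = \sqrt[4]{\beta_j}$, so that $\theta$ has minimal polynomial $X^4 - \beta_j$ over $k_\p$. Two sources of norms suffice. First, any $v \in \co_{k_\p}^*$ lies in $k_\p$, so $N_{L_j/k_\p}(v) = v^4$, giving $\co_{k_\p}^{*4} \subseteq N_{L_j/k_\p}(L_j^*)$. Second, the norm of $\theta$ is the product of its conjugates, namely $(-1)^4$ times the constant term $-\beta_j$ of $X^4 - \beta_j$, i.e. $-\beta_j = -\zeta_{q-1}^j\pi$. Multiplying these together shows that $H_j := \{v^4(-\zeta_{q-1}^j\pi)^m : v \in \co_{k_\p}^*,\ m \in \Z\}$ is contained in $N_{L_j/k_\p}(L_j^*)$.

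It remains to check that $H_j$ has index exactly $4$ in $k_\p^*$ and that $N_{L_j/k_\p}(L_j^*)$ does too. For the former, note that $-\zeta_{q-1}^j\pi$ has valuation $1$, so the valuation map sends $H_j$ onto $\Z$ while $H_j \cap \co_{k_\p}^* = \co_{k_\p}^{*4}$; hence $[k_\p^* : H_j] = [\co_{k_\p}^* : \co_{k_\p}^{*4}]$. Decomposing $\co_{k_\p}^* \cong \mu_{q-1} \times (1 + \mathfrak{m})$, the fourth-power map is an automorphism of the pro-$p$ group $1 + \mathfrak{m}$ (as $4$ is a unit in $\Z_p$ for odd $p$) and has image of index $\gcd(4, q-1) = 4$ on the cyclic group $\mu_{q-1}$, so $[k_\p^* : H_j] = 4$. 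For the latter, I would use that $q \equiv 1 \pmod 4$ forces $\zeta_4 \in k_\p$ (lift a primitive fourth root of unity from $\F_q$ by Hensel's lemma), so $L_j = k_\p(\sqrt[4]{\beta_j})$ is a Kummer extension and hence abelian — in fact cyclic of degree $4$. Then $L_j^{\mathrm{ab}} = L_j$, and Lemma~\ref{lem-index-of-norm-group} gives $[k_\p^* : N_{L_j/k_\p}(L_j^*)] = [L_j : k_\p] = 4$. Since $H_j \subseteq N_{L_j/k_\p}(L_j^*)$ and both have index $4$, they coincide.

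The only genuinely delicate point — and the place where the hypothesis $q \equiv 1 \pmod 4$ is really used — is the assertion that $L_j/k_\p$ is abelian, which is what pins down $[L_j^{\mathrm{ab}} : k_\p] = 4$; without $\zeta_4 \in k_\p$ the extension would fail to be Galois and the abelian index in Lemma~\ref{lem-index-of-norm-group} would drop, which is precisely why the case $q \equiv 3 \pmod 4$ must be handled by a separate argument.
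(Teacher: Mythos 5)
Your proof is correct and follows essentially the same route as the paper's: classification via Lemma~\ref{lem-tot-tamely-ram-exts}, the containment $\{v^4(-\zeta_{q-1}^j\pi)^m\}\subseteq N_{L_j/k_\p}(L_j^*)$ obtained from fourth powers of units together with $N_{L_j/k_\p}\big(\sqrt[4]{\zeta_{q-1}^j\pi}\big)=-\zeta_{q-1}^j\pi$, and equality by the index count of Lemma~\ref{lem-index-of-norm-group}. The paper leaves the two index-$4$ verifications implicit (that $H_j$ has index $4$ in $k_\p^*$, and that $\zeta_4\in k_\p$ makes $L_j/k_\p$ cyclic so $L_j^{\mathrm{ab}}=L_j$), and you have correctly supplied both.
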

\begin{proof}
    The classification of extensions comes from Lemma~\ref{lem-tot-tamely-ram-exts}. Clearly $N_{L_j/k_\p}(L_j^*)$ contains all fourth powers in $k_\p$, and we have $N_{L_j/k_\p}\Big(\sqrt[4]{\zeta_{q-1}^j\pi}\Big) = - \zeta_{q-1}^j\pi$, so 
    $$
    \{v^4(-\zeta_{q-1}^j\pi)^m : v \in \co_{k_\p}^*, m \in \Z\} \subseteq N_{L_j/k_\p}(L_j^*),
    $$
    and we get equality from Lemma~\ref{lem-index-of-norm-group}.
\end{proof}
\begin{lemma}
    \label{lem-norm-group-(1^4)-3mod4}
    Let $(L,\p) = (1^4)$ and $q\equiv 3\pmod{4}$. Up to isomorphism, there are two distinct possibilities for $L$, given by $L_j = k_\p\Big(\sqrt[4]{\zeta_{q-1}^j\pi}\Big)$ for $j \in \{0,1\}$, and 
    \begin{align*}
    N_{L_j/k_\p}(L_j^*) &= \{v^2(-\zeta_{q-1}^j\pi)^m : v \in \co_{k_\p}^*, m \in \Z\}
    \\
    &= \{v^4(-\zeta_{q-1}^j\pi)^m : v \in \co_{k_\p}^*, m \in \Z\}
    \end{align*}
\end{lemma}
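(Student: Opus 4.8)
The plan is to mirror the proof of Lemma~\ref{lem-norm-group-(1^4)-1mod4} and then exploit the arithmetic of the unit group forced by the congruence $q \equiv 3 \pmod{4}$. Applying Lemma~\ref{lem-tot-tamely-ram-exts} with $e = 4$ gives $g = \gcd(4, q-1) = 2$, since $q \equiv 3 \pmod 4$ means $q - 1 \equiv 2 \pmod 4$; this produces exactly the two extensions $L_j = k_\p\big(\sqrt[4]{\zeta_{q-1}^j\pi}\big)$ for $j \in \{0,1\}$, as claimed. Writing $a_j = \zeta_{q-1}^j\pi$, the same computation as before shows $N_{L_j/k_\p}\big(\sqrt[4]{a_j}\big) = -a_j$, and every element of $k_\p^{*4}$ is a norm, so
$$
\{v^4(-a_j)^m : v \in \co_{k_\p}^*,\ m \in \Z\} \subseteq N_{L_j/k_\p}(L_j^*).
$$

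The key point, and the reason the two descriptions in the statement coincide, is the identity $\co_{k_\p}^{*2} = \co_{k_\p}^{*4}$, which holds precisely because $q \equiv 3 \pmod 4$. Indeed, decomposing $\co_{k_\p}^* \cong \mu_{q-1} \times (1 + \pi\co_{k_\p})$, on the pro-$p$ factor $1 + \pi\co_{k_\p}$ (with $p$ odd) both squaring and fourth-powering are isomorphisms, while on $\mu_{q-1}$ the images of the squaring and fourth-power maps are $\mu_{(q-1)/\gcd(2,q-1)}$ and $\mu_{(q-1)/\gcd(4,q-1)}$ respectively, and $\gcd(2,q-1) = \gcd(4,q-1) = 2$. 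Hence $\{v^4(-a_j)^m\} = \{v^2(-a_j)^m\}$, which is the second displayed equality in the statement and lets me replace fourth powers by squares in the inclusion above.

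It remains to upgrade the inclusion to an equality of subgroups. First I would compute the index $[k_\p^* : \{v^2(-a_j)^m\}]$ directly: using $a_j$ as a uniformiser we have $k_\p^* \cong \langle a_j\rangle \times \co_{k_\p}^*$, and since $-1 \notin \co_{k_\p}^{*2}$ when $q \equiv 3 \pmod 4$, a short bookkeeping computation gives index $2$. On the other side, Lemma~\ref{lem-index-of-norm-group} gives $[k_\p^* : N_{L_j/k_\p}(L_j^*)] = [L_j^{\mathrm{ab}} : k_\p]$, and I would show this also equals $2$: the splitting field of $X^4 - a_j$ is obtained by adjoining $\zeta_4$, which lies in the unramified quadratic extension of $k_\p$ (as $q \equiv 3 \pmod 4$) and hence not in the totally ramified field $L_j$, so $L_j/k_\p$ is not Galois. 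Its maximal abelian subextension then has degree dividing $4$, is at least $2$ since it contains $k_\p(\sqrt{a_j})$, and is not $4$ since $L_j$ is not abelian, so it is exactly $2$. With both indices equal to $2$ and the containment already in hand, the norm group equals $\{v^2(-a_j)^m\}$. The main obstacle is really just the identity $\co_{k_\p}^{*2} = \co_{k_\p}^{*4}$ together with the non-Galois argument pinning the index at $2$; everything else is routine manipulation with uniformisers.
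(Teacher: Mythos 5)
Your proof is correct and follows essentially the same route as the paper's: classification of the two extensions via Lemma~\ref{lem-tot-tamely-ram-exts}, the explicit norms $N_{L_j/k_\p}\big(\sqrt[4]{\zeta_{q-1}^j\pi}\big)=-\zeta_{q-1}^j\pi$ giving the containment, the identity $\co_{k_\p}^{*2}=\co_{k_\p}^{*4}$ from $q\equiv 3\pmod 4$, and Lemma~\ref{lem-index-of-norm-group} together with the fact that $L_j/k_\p$ is nonabelian to pin both indices at $2$. The paper states this in one line by deferring to the proof of Lemma~\ref{lem-norm-group-(1^4)-1mod4}; you have simply supplied the details (the non-Galois argument via $\zeta_4$ and the explicit index count) that the paper leaves implicit.
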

\begin{proof}
    The proof is essentially the same as that of Lemma~\ref{lem-norm-group-(1^4)-1mod4}, except that the subgroup has index $2$, because the extensions are nonabelian. Since $q\equiv 3\pmod{4}$, we have $\co_{k_\p}^{*2} = \co_{k_\p}^{*4}$.
\end{proof}
\subsection{Mass computations}
\label{subsec-masses}
\begin{definition}
    \label{def-aut-and-disc-of-symbol}
    Let $\sigma = (f_1^{e_1}\ldots f_r^{e_r})$ be the splitting symbol of an \'etale algebra $L$ over a $p$-adic field $F$ with residue field of size $q$. Define 
    $$
    \Disc_\p(\sigma) = q^{\sum f_i(e_i-1)}
    $$
    and 
    $$
    \#\Aut(\sigma) = \Big(\prod_i f_i \Big)\cdot \#\{\varphi \in S_r : f_{\varphi(i)}^{e_{\varphi(i)}} = f_i^{e_i} \text{ for all $i$}\}. 
    $$
\end{definition}
Note that $\Sigma_{1,\p}^\sigma$ is just the set of all \'etale algebras with splitting symbol $\sigma$. 
\begin{lemma}
    \label{lem-mass-formula-in-terms-of-symbol}
    Let $\sigma$ be a splitting symbol of degree $n$, and let $\p$ be any finite prime of $k$. Then 
    $$
    \m_\p(\Sigma_{1,\p}^\sigma) = \frac{1}{\Disc_\p(\sigma)\#\Aut(\sigma)}.
    $$
\end{lemma}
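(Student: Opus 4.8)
The plan is to reduce the statement to a single mass formula for field extensions with prescribed ramification index and inertia degree, and then reassemble the answer from the multiplicative structure of \'etale algebras. Recall first that, since $1$ is a norm from every algebra, $\Sigma_{1,\p}^\sigma$ is simply the set of all degree $n$ \'etale $k_\p$-algebras with splitting symbol $\sigma = (f_1^{e_1}\ldots f_r^{e_r})$. Both ingredients of the pre-mass are multiplicative across the decomposition $L = L_1\times\ldots\times L_r$ into fields: $\Disc_\p(L/k_\p) = \prod_i \Disc_\p(L_i/k_\p)$ by Remark~\ref{remark-disc-def}, while if $L \cong \prod_m M_m^{c_m}$ with the $M_m$ pairwise non-isomorphic fields, then automorphisms cannot mix non-isomorphic factors, so $\#\Aut(L/k_\p) = \prod_m \big((\#\Aut(M_m/k_\p))^{c_m}\, c_m!\big)$. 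It is therefore natural to introduce, for a pair $(e,f)$, the quantity $w_{e,f} = \sum_{M} \frac{1}{\Disc_\p(M/k_\p)\,\#\Aut(M/k_\p)}$, where $M$ ranges over isomorphism classes of field extensions of $k_\p$ with ramification index $e$ and inertia degree $f$ (a finite sum, since $k_\p$ has only finitely many extensions of each degree).

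Next I would carry out the combinatorial reduction. Group the blocks $f_i^{e_i}$ of $\sigma$ by the pair $(e_i,f_i)$, writing the distinct pairs as $(e^{(j)},f^{(j)})$ with multiplicities $n_j$, so $\sum_j n_j = r$. Choosing an isomorphism class of $L$ with symbol $\sigma$ amounts to choosing, independently for each $j$, a multiset of $n_j$ field extensions of type $(e^{(j)},f^{(j)})$, and by the multiplicativity above the weight factors over $j$. For a fixed type of multiplicity $n$, setting $a_M = \frac{1}{\Disc_\p(M/k_\p)\#\Aut(M/k_\p)}$, the contribution is $\sum_{(c_M):\sum c_M = n} \prod_M \frac{a_M^{c_M}}{c_M!} = \frac{1}{n!}\big(\sum_M a_M\big)^n = \frac{w_{e,f}^{\,n}}{n!}$ by the multinomial theorem. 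Collecting the factors gives $\m_\p(\Sigma_{1,\p}^\sigma) = \prod_j \frac{w_{e^{(j)},f^{(j)}}^{\,n_j}}{n_j!}$. Comparing with the definitions, $\Disc_\p(\sigma) = \prod_j \big(q^{f^{(j)}(e^{(j)}-1)}\big)^{n_j}$ and $\#\Aut(\sigma) = \prod_j (f^{(j)})^{n_j}\cdot\prod_j n_j!$ (the permutation factor being exactly $\prod_j n_j!$), so the desired identity is equivalent to the single-type formula $w_{e,f} = \frac{1}{q^{f(e-1)}\,f}$.

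To prove this last formula I would pass to subfields of a fixed algebraic closure $\overline{k_\p}$. Since a degree $ef$ separable extension $M$ has $ef$ embeddings into $\overline{k_\p}$ realising $ef/\#\Aut(M/k_\p)$ distinct subfields, one has $w_{e,f} = \frac{1}{ef}\sum_{M \subseteq \overline{k_\p}} \Disc_\p(M/k_\p)^{-1}$, the sum over subfields of type $(e,f)$. Each such $M$ contains the unique unramified subextension $M_0$ of degree $f$, with $M/M_0$ totally ramified of degree $e$; the discriminant tower formula over the unramified base $M_0$ gives $v_\p(\Disc(M/k_\p)) = f\cdot v_{M_0}(\Disc(M/M_0))$. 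The relevant $M$ are precisely the totally ramified degree $e$ subextensions of $\overline{k_\p}/M_0$, and Serre's mass formula for totally ramified extensions, applied to $M_0$ (residue field of size $q^f$), gives $\sum_M (q^f)^{-v_{M_0}(\Disc(M/M_0))} = e\cdot(q^f)^{-(e-1)}$. Hence $w_{e,f} = \frac{1}{ef}\cdot e\,q^{-f(e-1)} = \frac{1}{q^{f(e-1)}\,f}$, as required.

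The combinatorial bookkeeping is routine, and the only genuine input is Serre's mass formula; this is what makes the identity hold at \emph{every} finite prime rather than only at the tamely ramified ones covered by Lemma~\ref{lem-tot-tamely-ram-exts}, since it absorbs all wild contributions to the discriminant uniformly. I therefore expect the main obstacle (or rather, the one nontrivial ingredient) to be invoking Serre's formula correctly over the unramified base $M_0$; the remaining care is purely in the wreath-product count of $\#\Aut$ for products and in the unramified-base discriminant tower formula.
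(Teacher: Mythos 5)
Your argument is correct, but it is doing strictly more work than the paper does: the paper's entire proof is to observe that $\m_\p(\Sigma_{1,\p}^\sigma)$ is by definition the sum of $\frac{1}{\Disc_\p(L)\#\Aut(L/k_\p)}$ over all quartic \'etale algebras with symbol $\sigma$, and then to cite Proposition~2.1 of Bhargava's mass formula paper (together with its appendix for the passage from $\Q_p$ to a general $p$-adic base field). What you have written is, in effect, a self-contained proof of that cited proposition: the reduction from \'etale algebras to fields via multiplicativity of $\Disc_\p$ and the wreath-product structure of $\Aut$, the multinomial identity $\sum_{\sum c_M = n}\prod_M a_M^{c_M}/c_M! = \frac{1}{n!}(\sum_M a_M)^n$ handling repeated types, the matching of $\prod_j n_j!$ with the permutation factor in $\#\Aut(\sigma)$, and finally the single-type identity $w_{e,f} = \frac{1}{f\,q^{f(e-1)}}$ obtained by passing to subfields of $\overline{k_\p}$ (each isomorphism class contributing $ef/\#\Aut(M/k_\p)$ subfields), using the discriminant tower formula over the unramified subextension $M_0$, and invoking Serre's mass formula over $M_0$ with residue field of size $q^f$. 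All of these steps check out, including the wild case, which is exactly why the statement holds at every finite prime and not just the tamely ramified ones. The trade-off is clear: the paper's citation is shorter and defers the content to the literature, while your version makes the lemma independent of that reference at the cost of importing Serre's mass formula as the one nontrivial input -- which is also the key ingredient in the proof you would find by following the paper's citation.
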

\begin{proof}
    By definition of $\Sigma_{\alpha,\p}^\sigma$ and $\tilde{m}_\p$, we have
    $$
    \m_\p(\Sigma_{1,\p}^\sigma) = \sum_{\substack{[L : k_\p] = n \text{ \'etale} \\ (L,\p) = \sigma}} \frac{1}{\Disc_\p(L)}\cdot \frac{1}{\#\Aut(L/k_\p)}.   
    $$ 
    Therefore, the result is essentially \cite[Proposition~2.1]{bhargava-mass-formula}, adapted to the case of general $p$-adic fields. The appendix of \cite{bhargava-mass-formula} explains how to make this adaptation. 
\end{proof}

\begin{lemma}
    \label{lem-masses-of-splitting-symbols}
    Let $\p$ be any finite prime of $k$. The pre-masses of the sets $\Sigma_{1,\p}^\sigma$ are given by the following table: 
    \begin{center}
        \begin{tabular}{|c|c|}
            \hline 
            $\sigma$ & $\m_\p(\Sigma_{1,\p}^\sigma)$
            \\
            \hline
            $ (1111) $ & $ \frac{1}{24} $
            \\
            \hline
            $ (112) $ & $ \frac{1}{4} $
            \\
            \hline
            $ (13) $ & $ \frac{1}{3} $
            \\
            \hline
            $ (22) $ & $ \frac{1}{8} $
            \\
            \hline
            $ (4) $ & $ \frac{1}{4} $
            \\
            \hline
            $ (1^211) $ & $ \frac{1}{2 \, q} $
            \\
            \hline
            $ (1^22) $ & $ \frac{1}{2 \, q} $
            \\
            \hline
            $ (1^21^2) $ & $ \frac{1}{2 \, q^{2}} $
            \\
            \hline
            $ (2^2) $ & $ \frac{1}{2 \, q^{2}} $
            \\
            \hline
            $ (1^31) $ & $ \frac{1}{q^{2}} $
            \\
            \hline
            $ (1^4) $ & $ \frac{1}{q^{3}} $
            \\
            \hline
        \end{tabular}
    \end{center}
\end{lemma}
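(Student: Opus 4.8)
The plan is to invoke Lemma~\ref{lem-mass-formula-in-terms-of-symbol}, which reduces each pre-mass to the purely combinatorial quantity $\frac{1}{\Disc_\p(\sigma)\#\Aut(\sigma)}$. Since both $\Disc_\p(\sigma)$ and $\#\Aut(\sigma)$ are given explicitly in Definition~\ref{def-aut-and-disc-of-symbol} in terms of the exponents $e_i$ and inertia degrees $f_i$ appearing in $\sigma$, the whole lemma becomes a finite check: for each of the eleven symbols in the table, I read off the $(f_i,e_i)$ data and substitute.

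First I would compute the discriminant exponents. By definition $\Disc_\p(\sigma) = q^{\sum_i f_i(e_i-1)}$, so the power of $q$ depends only on the ramified factors. For the unramified symbols $(1111),(112),(13),(22),(4)$ every $e_i=1$, giving $\Disc_\p(\sigma)=1$. For the rest one simply sums $f_i(e_i-1)$: for instance $(1^211)$ and $(1^22)$ give exponent $1$, while $(1^21^2),(2^2),(1^31)$ give exponent $2$, and $(1^4)$ gives exponent $3$.

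Next I would compute $\#\Aut(\sigma) = \bigl(\prod_i f_i\bigr)\cdot S(\sigma)$, where $S(\sigma)$ is the number of permutations of the factors preserving the symbol. The factor $\prod_i f_i$ records the automorphisms internal to the factors, and $S(\sigma)$ records the exchanges of isomorphic factors. The one point demanding care is the symmetry count $S(\sigma)$: two blocks may be swapped only when their full symbols $f_i^{e_i}$ coincide. Thus in $(1^211)$ only the two $1^1$ blocks may be swapped (so $S=2$), while the $1^2$ block is rigid; in $(22)$ and $(1^21^2)$ the two identical blocks give $S=2$; and in $(1111)$ all four blocks are interchangeable, giving $S=4!=24$.

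Finally I would assemble the two computations into $\m_\p(\Sigma_{1,\p}^\sigma)=\frac{1}{\Disc_\p(\sigma)\#\Aut(\sigma)}$ for each row, recovering the table. I expect no genuine obstacle, as the result is mechanical once Lemma~\ref{lem-mass-formula-in-terms-of-symbol} is in hand; the only place an error could creep in is the bookkeeping for $S(\sigma)$, where one must carefully distinguish blocks with equal inertia degree but unequal ramification (for example $1^2$ versus $1^1$, or $1^2$ versus $2^1$) and treat them as non-interchangeable.
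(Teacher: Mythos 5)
Your proposal is correct and follows exactly the paper's route: the paper's proof is the one-line observation that the table follows immediately from Lemma~\ref{lem-mass-formula-in-terms-of-symbol}, with the substitution of $\Disc_\p(\sigma)$ and $\#\Aut(\sigma)$ from Definition~\ref{def-aut-and-disc-of-symbol} left implicit. Your explicit bookkeeping (in particular the care with $S(\sigma)$ for symbols like $(1^211)$) matches the intended computation and all eleven entries check out.
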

\begin{proof}
    This follows immediately from Lemma~\ref{lem-mass-formula-in-terms-of-symbol}. 
\end{proof}
Write $\Sigma_{\p}^\triv$ for the set of isomorphism classes of quartic \'etale $k_\p$-algebras with trivial splitting symbols. It follows from Lemma~\ref{lem-triv-symbols-have-full-norm-group} that $\Sigma_\p^\triv \subseteq \Sigma_{\mathcal{A},\p}$ for all finitely generated subgroups $\mathcal{A}\subseteq k^*$. 

\begin{lemma}
    \label{lem-mass-triv}
    For all finite primes $\p$ of $k$, we have 
    $$
    \m_\p(\Sigma_{\p}^\triv) = \frac{5q^2 + 8q + 8}{8q^2}. 
    $$
\end{lemma}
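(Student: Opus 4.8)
The plan is to exploit the additivity of the pre-mass over splitting symbols, which reduces the claim to a finite sum of quantities already computed in the previous lemma. First I would observe that, since every element of $k_\p^*$ is a norm from any étale algebra with trivial splitting symbol by Lemma~\ref{lem-triv-symbols-have-full-norm-group}, the set $\Sigma_\p^\triv$ consists of \emph{all} quartic étale $k_\p$-algebras whose splitting symbol lies in $\triv$. Since $\Sigma_{1,\p}^\sigma$ is by definition the set of all quartic étale algebras with splitting symbol $\sigma$, this identifies $\Sigma_\p^\triv$ with the disjoint union $\bigsqcup_{\sigma \in \triv} \Sigma_{1,\p}^\sigma$.

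Next I would invoke the additivity of the pre-mass. Because the sets $\Sigma_{1,\p}^\sigma$ partition $\Sigma_\p^\triv$ according to splitting symbol, the defining sum in Definition~\ref{defi-pre-mass} breaks up as
$$
\m_\p(\Sigma_\p^\triv) = \sum_{\sigma \in \triv} \m_\p(\Sigma_{1,\p}^\sigma).
$$
The six summands on the right are exactly the entries of the table in Lemma~\ref{lem-masses-of-splitting-symbols} indexed by $\sigma \in \{(1111), (112), (13), (1^211), (1^22), (1^31)\}$.

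Finally I would substitute those values and combine the fractions. The symbol-independent terms contribute $\tfrac{1}{24} + \tfrac{1}{4} + \tfrac{1}{3} = \tfrac{5}{8}$, the symbols $(1^211)$ and $(1^22)$ each contribute $\tfrac{1}{2q}$, for a total of $\tfrac{1}{q}$, and $(1^31)$ contributes $\tfrac{1}{q^2}$. Placing these over the common denominator $8q^2$ gives $\tfrac{5q^2 + 8q + 8}{8q^2}$, as claimed. There is no substantive obstacle here: all the genuine work was carried out in Lemma~\ref{lem-masses-of-splitting-symbols}, so the only points requiring care are the bookkeeping of which symbols are trivial and the final arithmetic.
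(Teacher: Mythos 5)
Your proposal is correct and matches the paper's argument, which simply cites Lemma~\ref{lem-masses-of-splitting-symbols}; you have filled in the same decomposition by splitting symbol and the same arithmetic ($\tfrac{1}{24}+\tfrac14+\tfrac13+\tfrac{1}{2q}+\tfrac{1}{2q}+\tfrac{1}{q^2} = \tfrac{5q^2+8q+8}{8q^2}$) that the paper leaves implicit.
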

\begin{proof}
    This follows immediately from Lemma~\ref{lem-masses-of-splitting-symbols}.
\end{proof}
We fix some notation for use in the following few lemmas. As always, let $\alpha \in k^*$. Let $\p$ be a finite prime of $k$, and let $q = N(\p)$. Write $r = \ord_\p(\alpha)$, and fix an arbitrary element $\pi \in \p\setminus \p^2$. Then $\alpha = u\pi^r$ for a unique $u \in k^*$, and moreover $u \in \co_{k_\p}^*$. The notations $r,\pi,$ and $u$ will be used without introduction for the rest of the current subsection.
\begin{lemma}
    \label{lem-mass-(22)}
    For all finite primes $\p$, we have 
    $$
    \m_\p(\Sigma_{\alpha,\p}^{(22)}) = \begin{cases}
        \frac{1}{8} \quad \text{if $2 \mid r$},
        \\
        0 \quad \text{otherwise}. 
    \end{cases}
    $$
\end{lemma}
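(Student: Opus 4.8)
The plan is to exploit the fact that, up to isomorphism, there is exactly one quartic \'etale $k_\p$-algebra with splitting symbol $(22)$, so that $\Sigma_{\alpha,\p}^{(22)}$ is either a singleton or empty; deciding which reduces to a single membership test in a norm group that Lemma~\ref{lem-norm-group-(22)} has already computed. First I would observe that the symbol $(22)$ forces $L \cong L_1 \times L_1$, where $L_1$ is the unique unramified quadratic extension of $k_\p$ (uniqueness of unramified extensions of each degree holds for every finite prime). Hence there is a unique isomorphism class of \'etale algebras with this symbol, and $\Sigma_{1,\p}^{(22)} = \{L\}$.

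Next I would invoke Lemma~\ref{lem-norm-group-(22)}, which gives
$$
N_{L/k_\p}(L^*) = \{v\pi^{2m} : v \in \co_{k_\p}^*, \ m \in \Z\}.
$$
Since $v$ ranges over all units, this set is precisely the collection of elements of $k_\p^*$ of even valuation. Writing $\alpha = u\pi^r$ with $u \in \co_{k_\p}^*$ and $r = \ord_\p(\alpha)$, it follows immediately that $\alpha \in N_{L/k_\p}(L^*)$ if and only if $r$ is even, i.e. $2 \mid r$.

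Finally I would split into the two cases. If $2 \mid r$, then $L \in \Sigma_{\alpha,\p}^{(22)}$, so $\Sigma_{\alpha,\p}^{(22)} = \{L\} = \Sigma_{1,\p}^{(22)}$, and Lemma~\ref{lem-masses-of-splitting-symbols} yields $\m_\p(\Sigma_{\alpha,\p}^{(22)}) = \frac{1}{8}$. If $2 \nmid r$, then $L \notin \Sigma_{\alpha,\p}^{(22)}$, so $\Sigma_{\alpha,\p}^{(22)} = \emptyset$ and its pre-mass is $0$. This proves the stated formula.

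The argument is entirely routine, so there is no genuine obstacle; the only point requiring a little care is that the lemma is asserted for \emph{all} finite primes $\p$, including those lying over $2$. I would therefore note explicitly that both the uniqueness of $L$ and the norm-group description of Lemma~\ref{lem-norm-group-(22)} are valid without any parity hypothesis on $N(\p)$, so the conclusion holds uniformly across all finite primes.
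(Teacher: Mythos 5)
Your proposal is correct and follows essentially the same route as the paper: it uses Lemma~\ref{lem-norm-group-(22)} to show that $\Sigma_{\alpha,\p}^{(22)}$ equals $\Sigma_{1,\p}^{(22)}$ when $2\mid r$ and is empty otherwise, then reads off the pre-mass from Lemma~\ref{lem-masses-of-splitting-symbols}. You merely spell out the intermediate details (uniqueness of the \'etale algebra with symbol $(22)$ and the even-valuation criterion) that the paper leaves implicit, and your remark that everything holds uniformly for primes over $2$ is a correct and worthwhile observation.
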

\begin{proof}
    Lemma~\ref{lem-norm-group-(22)} tells us that 
    $$
    \Sigma_{\alpha,p}^{\sigma} = \begin{cases}
        \Sigma_{1,p}^{\sigma} \quad \text{if $2\mid r$},
        \\
        \varnothing \quad \text{otherwise},
    \end{cases}
    $$
    so the result follows from Lemma~\ref{lem-masses-of-splitting-symbols}.
\end{proof}
\begin{lemma}
    \label{lem-mass-(4)}
    For all finite primes $\p$, we have 
    $$
    \tilde{m}_\p(\Sigma_{\alpha,\p}^{(4)}) = \begin{cases}
        \frac{1}{4} \quad \text{if $4 \mid r$},
        \\
        0 \quad \text{otherwise}. 
    \end{cases}
    $$
\end{lemma}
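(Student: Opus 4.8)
The plan is to follow exactly the template of Lemma~\ref{lem-mass-(22)}, substituting the norm-group description for splitting symbol $(4)$ in place of the one for $(22)$. First I would note that the symbol $(4)$ singles out a unique isomorphism class of quartic \'etale algebra, namely the (unramified) quartic field extension $L/k_\p$, so that $\Sigma_{1,\p}^{(4)}$ consists of this single algebra. This is what makes the computation tractable: rather than summing over a family, I only need to decide membership of one algebra.

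Next I would invoke Lemma~\ref{lem-norm-group-(4)}, which tells us that $N_{L/k_\p}(L^*) = \{v\pi^{4m} : v \in \co_{k_\p}^*,\ m \in \Z\}$. The single substantive step is then the valuation comparison: writing $\alpha = u\pi^r$ with $u \in \co_{k_\p}^*$, and observing that the displayed norm group is precisely the set of elements of $k_\p^*$ whose $\p$-adic valuation is divisible by $4$, I would conclude that $\alpha \in N_{L/k_\p}(L^*)$ if and only if $4 \mid r$ (when $4\mid r$ one simply takes $v = u$ and $m = r/4$). Hence $\Sigma_{\alpha,\p}^{(4)} = \Sigma_{1,\p}^{(4)}$ when $4 \mid r$, and $\Sigma_{\alpha,\p}^{(4)} = \varnothing$ otherwise.

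Finally I would read off the value $\m_\p(\Sigma_{1,\p}^{(4)}) = \tfrac{1}{4}$ from Lemma~\ref{lem-masses-of-splitting-symbols}, which immediately gives the two cases in the statement. The main obstacle is, frankly, minimal: all the real work has already been done in establishing the norm group in Lemma~\ref{lem-norm-group-(4)}, and the remaining argument reduces to the elementary fact that $\alpha$ and any candidate norm can only agree when their valuations match modulo $4$, since the unit parts are absorbed freely into $\co_{k_\p}^*$. This mirrors the $(22)$ case verbatim, with the divisibility condition $2\mid r$ replaced by $4 \mid r$.
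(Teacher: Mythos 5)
Your proposal is correct and follows exactly the paper's argument: the paper proves this lemma by noting it is ``essentially the same'' as Lemma~\ref{lem-mass-(22)}, i.e.\ by using Lemma~\ref{lem-norm-group-(4)} to deduce that $\Sigma_{\alpha,\p}^{(4)}$ equals $\Sigma_{1,\p}^{(4)}$ when $4\mid r$ and is empty otherwise, then reading the pre-mass $\tfrac14$ off Lemma~\ref{lem-masses-of-splitting-symbols}. Your valuation argument for the membership criterion is the right (and only) substantive step, and it is carried out correctly.
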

\begin{proof}
    The proof is essentially the same as that of Lemma~\ref{lem-mass-(22)}, using Lemmas~\ref{lem-norm-group-(4)} and~\ref{lem-masses-of-splitting-symbols}.
\end{proof}
\begin{lemma}
    \label{lem-mass-(1^21^2)}
    If $q$ is odd, then we have 
    $$
    \m_\p(\Sigma_{\alpha,\p}^{(1^21^2)}) = \begin{cases}
        \frac{1}{2q^2} \quad \text{if $2 \mid r$ and $u \in \co_{k_\p}^{* 2}$},
        \\
        \frac{1}{4q^2} \quad \text{if $2 \mid r$ and $u \not \in \co_{k_\p}^{* 2}$},
        \\
        \frac{3}{8q^2} \quad \text{if $2\nmid r$}. 
    \end{cases}
    $$
\end{lemma}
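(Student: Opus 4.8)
The plan is to decompose $\Sigma_{\alpha,\p}^{(1^21^2)}$ according to the three étale algebras classified in Lemma~\ref{lem-norm-group-(1^21^2)}, namely
$$
L_1 = k_\p(\sqrt{\pi})\times k_\p(\sqrt{\pi}),\quad L_2 = k_\p(\sqrt{\pi})\times k_\p(\sqrt{\zeta_{q-1}\pi}),\quad L_3 = k_\p(\sqrt{\zeta_{q-1}\pi})\times k_\p(\sqrt{\zeta_{q-1}\pi}),
$$
and to decide for each whether $\alpha = u\pi^r$ lies in its norm group. First I record the weights. Each of these algebras has splitting symbol $(1^21^2)$, so $\Disc_\p(L_i/k_\p) = q^2$ by Lemma~\ref{lem-disc-from-symbol} and Remark~\ref{remark-disc-def}. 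Computing automorphism groups, $L_2$ is a product of two non-isomorphic ramified quadratics and has $\#\Aut = 4$, while $L_1$ and $L_3$ are each the square of a single quadratic field and have $\#\Aut = 8$; these give per-algebra contributions $\frac{1}{4q^2}$, $\frac{1}{8q^2}$, and $\frac{1}{8q^2}$ respectively, which sum to the value $\frac{1}{2q^2}$ recorded for $\Sigma_{1,\p}^{(1^21^2)}$ in Lemma~\ref{lem-masses-of-splitting-symbols}, providing a useful consistency check.

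Next I determine membership. The algebra $L_2$ has norm group $k_\p^*$ by Lemma~\ref{lem-norm-group-(1^21^2)}, so $L_2 \in \Sigma_{\alpha,\p}^{(1^21^2)}$ always. For $L_1$ and $L_3$ the norm groups are $\{v^2(-\pi)^m\}$ and $\{v^2(-\zeta_{q-1}\pi)^m\}$; comparing $\p$-adic valuations against $\alpha = u\pi^r$ forces the exponent $m$ to equal $r$, after which membership reduces to a condition on the unit part. Concretely, $L_1 \in \Sigma_{\alpha,\p}^{(1^21^2)}$ iff $(-1)^r u \in \co_{k_\p}^{*2}$, and $L_3 \in \Sigma_{\alpha,\p}^{(1^21^2)}$ iff $(-\zeta_{q-1})^r u \in \co_{k_\p}^{*2}$. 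Since $q$ is odd, Hensel's lemma gives $\co_{k_\p}^*/\co_{k_\p}^{*2} \cong \Z/2\Z$, with $\zeta_{q-1}$ representing the nontrivial (nonsquare) class.

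Finally I run the case analysis, writing $[\,\cdot\,]$ for the class in $\co_{k_\p}^*/\co_{k_\p}^{*2}$. When $2 \mid r$, the factors $(-1)^r$ and $(-\zeta_{q-1})^r$ are both squares, so both the $L_1$ and $L_3$ conditions collapse to "$u \in \co_{k_\p}^{*2}$"; hence either all three algebras lie in $\Sigma_{\alpha,\p}^{(1^21^2)}$ (when $u$ is a square, mass $\frac{1}{2q^2}$) or only $L_2$ does (when $u$ is not, mass $\frac{1}{4q^2}$). When $2\nmid r$, the two conditions become $[u] = [-1]$ and $[u] = [-1]+[\zeta_{q-1}]$, which differ by the nonsquare class $[\zeta_{q-1}]$, so exactly one of them holds for any $u$; thus $L_2$ together with precisely one of $L_1, L_3$ contributes, giving mass $\frac{1}{4q^2}+\frac{1}{8q^2} = \frac{3}{8q^2}$. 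Adding the weights in each case yields the three stated values.

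The main obstacle is the odd-$r$ case: the point is that twisting by $-\zeta_{q-1}$ rather than $-1$ shifts the square-class condition by the nonsquare $\zeta_{q-1}$, so that $L_1$ and $L_3$ satisfy complementary conditions and exactly one always contributes — independently of $u$ and, notably, of $q \bmod 4$, which is why the stated answer is uniform in $q$. Verifying this complementarity (and checking that the $\pm 1$ twists are harmless when $r$ is even) is the crux; the weight bookkeeping is routine given the earlier lemmas.
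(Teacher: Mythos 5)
Your proof is correct and follows essentially the same route as the paper: decompose by the three algebras $L_1,L_2,L_3$ of Lemma~\ref{lem-norm-group-(1^21^2)}, compute the per-algebra pre-masses $\frac{1}{8q^2},\frac{1}{4q^2},\frac{1}{8q^2}$, and determine membership from the norm groups. The only difference is that you spell out explicitly the ``some casework'' that the paper leaves implicit (the complementarity of the square-class conditions for $L_1$ and $L_3$ when $r$ is odd), and your analysis of that point is accurate.
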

\begin{proof}
    Lemma~\ref{lem-norm-group-(1^21^2)} tells us that $\Sigma_{1,\p}^{(1^21^2)} = \{L_1, L_2, L_3\}$, where the fields $L_j$ and their norm groups are given by the following table:
    \begin{center}
    \begin{tabular}{|c|c|c|}
        \hline
        $j$ & $L_j$ & $N_{L_j/k_\p}(L_j^*)$
        \\
        \hline \hline
        $1$ & $k_\p(\sqrt{\pi})\times k_\p(\sqrt{\pi})$ & $\{v^2(-\pi)^m : v \in \co_{k_\p}^*, m \in \Z\}$
        \\
        \hline 
        $2$ & $k_\p(\sqrt{\pi}) \times k_\p(\sqrt{\zeta_{q-1}\pi})$ & $k_\p^*$
        \\
        \hline
        $3$ & $k_\p(\sqrt{\zeta_{q-1}\pi}) \times k_\p(\sqrt{\zeta_{q-1}\pi})$ & $\{v^2(-\zeta_{q-1}\pi)^m : v \in \co_{k_\p}^*, m \in \Z\}$
        \\
        \hline
    \end{tabular}
\end{center}
    We have 
    $$
    \#\Aut(L_j/k_\p) = \begin{cases}
        8 \quad \text{if $j=1,3$},
        \\
        4 \quad \text{otherwise},
    \end{cases}
    $$
    and
    $$
    \Disc_\p(L_j) = q^2 \quad \text{for all $j$},
    $$
    by Lemma~\ref{lem-disc-from-symbol}. It follows that 
    $$
    \m_\p(\{L_j\}) = \begin{cases}
        \frac{1}{8q^2} \quad \text{if $j=1,3$},
        \\
        \frac{1}{4q^2} \quad \text{otherwise}.
    \end{cases}
    $$ 
    Some casework establishes that 
    $$
    \Sigma_{\alpha,\p}^{(1^21^2)} = \begin{cases}
        \{L_1,L_2,L_3\} \quad \text{if $2 \mid r$ and $u \in \co_{k_\p}^{* 2}$},
        \\
        \{L_2\} \quad \text{if $2 \mid r$ and $u \not \in \co_{k_\p}^{* 2}$},
        \\
        \{L_2,L_j\} \text{ for some $j\in\{1,3\}$} \quad \text{if $2\nmid r$}. 
    \end{cases}
    $$
    The result then follows by adding together the relevant pre-masses $\m_\p(\{L_j\})$. 
\end{proof}
The following three lemmas are proved in essentially the same way as Lemma~\ref{lem-mass-(1^21^2)}, using Lemmas~\ref{lem-norm-group-(2^2)}, \ref{lem-norm-group-(1^4)-1mod4}, and \ref{lem-norm-group-(1^4)-3mod4} respectively for the norm groups. 
\begin{lemma}
    \label{lem-mass-(2^2)}
    If $q$ is odd, then we have 
    $$
    \m_\p(\Sigma_{\alpha,\p}^{(2^2)}) = \begin{cases}
        \frac{1}{2q^2} \quad \text{if $4 \mid r$ and $u \in \co_{k_\p}^{* 2}$},
        \\
        0 \quad \text{if $4 \mid r$ and $u \not \in \co_{k_\p}^{* 2}$},
        \\
        \frac{1}{4q^2} \quad \text{if $r \equiv 2\pmod{4}$},
        \\
        0 \quad \text{if $2\nmid r$}.
    \end{cases}
    $$
\end{lemma}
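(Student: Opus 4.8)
The plan is to follow the template of Lemma~\ref{lem-mass-(1^21^2)}: list the quartic \'etale algebras with splitting symbol $(2^2)$, record the pre-mass of each, and then use casework to decide which of them contain $\alpha$ in their norm group. Since $q$ is odd, Lemma~\ref{lem-norm-group-(2^2)} gives exactly two such algebras, namely $L_0 = k_\p(\sqrt{\zeta_{q-1}}, \sqrt{\pi})$ and $L_1 = k_\p(\sqrt{\zeta_{q-1}}, \sqrt{\zeta_{q^2-1}\pi})$, with norm groups $\{v^2\pi^{2m}\}$ and $\{v^2\zeta_{q-1}^m\pi^{2m}\}$ respectively, where $v$ ranges over $\co_{k_\p}^*$ and $m$ over $\Z$.

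First I would compute the pre-mass of each singleton. Both algebras have $\Disc_\p(L_j) = q^2$ by Lemma~\ref{lem-disc-from-symbol} (the residue characteristic is odd, so $p \nmid 2$), and $\#\Aut(L_j/k_\p) = 4$, so that $\m_\p(\{L_j\}) = \frac{1}{4q^2}$ for $j = 0,1$; as a consistency check, these sum to $\m_\p(\Sigma_{1,\p}^{(2^2)}) = \frac{1}{2q^2}$ from Lemma~\ref{lem-masses-of-splitting-symbols}. Next I would convert the membership conditions $\alpha \in N_{L_j/k_\p}(L_j^*)$ into conditions on $r = \ord_\p(\alpha)$ and the unit $u$ via $\alpha = u\pi^r$. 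Comparing valuations shows that both require $2 \mid r$; matching unit parts then yields $\alpha \in N_{L_0/k_\p}(L_0^*)$ iff $u \in \co_{k_\p}^{*2}$, and $\alpha \in N_{L_1/k_\p}(L_1^*)$ iff $u \in \zeta_{q-1}^{r/2}\co_{k_\p}^{*2}$.

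Because $q$ is odd, $\co_{k_\p}^*/\co_{k_\p}^{*2}$ has order $2$ with $\zeta_{q-1}$ representing the nontrivial class, so $\zeta_{q-1}^{r/2}$ is a square exactly when $4 \mid r$. Feeding this into the two membership conditions and summing the relevant singleton pre-masses then produces the four cases in the statement.

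The step needing the most care is $r \equiv 2 \pmod 4$. Here $\zeta_{q-1}^{r/2}$ is a nonsquare, so the $L_0$-condition ($u$ a square) and the $L_1$-condition ($u$ a nonsquare) are complementary; thus exactly one of the two algebras lies in $\Sigma_{\alpha,\p}^{(2^2)}$ whichever square class $u$ occupies, giving pre-mass $\frac{1}{4q^2}$ uniformly. This mirrors the $2 \nmid r$ case of Lemma~\ref{lem-mass-(1^21^2)} and is the only point where the answer does not split further according to the square class of $u$; the remaining cases ($4 \mid r$ with $u$ a square or a nonsquare, and $2 \nmid r$) are then immediate.
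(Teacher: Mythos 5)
Your proposal is correct and matches the paper's intended argument: the paper proves this lemma by declaring it "essentially the same" as Lemma~\ref{lem-mass-(1^21^2)}, i.e.\ exactly the route you take — enumerate the two algebras from Lemma~\ref{lem-norm-group-(2^2)}, note each has pre-mass $\frac{1}{4q^2}$, and translate the norm-group membership into conditions on $r$ and the square class of $u$ via the parity of $r/2$. Your handling of the $r\equiv 2\pmod 4$ case (the two membership conditions are complementary square classes, so exactly one algebra survives) is the right key observation and agrees with the stated values.
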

\begin{lemma}
    \label{lem-mass-(1^4)-1mod4}
    If $q \equiv 1\pmod{4}$, then we have 
    $$
    \m_\p(\Sigma_{\alpha,\p}^{(1^4)}) = \begin{cases}
        \frac{1}{q^3} \quad \text{if $4 \mid r$ and $u \in \co_{k_\p}^{* 4}$},
        \\
        0 \quad \text{if $4 \mid r$ and $u \not \in \co_{k_\p}^{* 4}$},
        \\
        \frac{1}{2q^3} \quad \text{if $r \equiv 2\pmod{4}$ and $u \in \co_{k_\p}^{* 2}$},
        \\
        0 \quad \text{if $r \equiv 2 \pmod{4}$ and $u \not \in \co_{k_\p}^{* 2}$},
        \\
        \frac{1}{4q^3} \quad \text{if $2\nmid r$}.
    \end{cases}
    $$
\end{lemma}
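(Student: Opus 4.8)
The plan is to follow the template of Lemma~\ref{lem-mass-(1^21^2)}. By Lemma~\ref{lem-norm-group-(1^4)-1mod4}, the set $\Sigma_{1,\p}^{(1^4)}$ consists of the four fields $L_j = k_\p(\sqrt[4]{\zeta_{q-1}^j\pi})$, $j \in \{0,1,2,3\}$, with norm groups $N_{L_j/k_\p}(L_j^*) = \{v^4(-\zeta_{q-1}^j\pi)^m : v \in \co_{k_\p}^*, m \in \Z\}$. Since $q \equiv 1 \pmod 4$, the field $k_\p$ contains a primitive fourth root of unity, so each $L_j/k_\p$ is a cyclic Kummer extension and $\#\Aut(L_j/k_\p) = 4$; moreover $\Disc_\p(L_j) = q^3$ by Lemma~\ref{lem-disc-from-symbol}. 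Hence each singleton has pre-mass $\m_\p(\{L_j\}) = \frac{1}{4q^3}$, consistent with the entry $\m_\p(\Sigma_{1,\p}^{(1^4)}) = \frac{1}{q^3}$ of Lemma~\ref{lem-masses-of-splitting-symbols}. It therefore suffices to count, in each case, how many of the $L_j$ lie in $\Sigma_{\alpha,\p}^{(1^4)}$: the stated answers $\frac{1}{q^3}, \frac{1}{2q^3}, \frac{1}{4q^3}, 0$ correspond to $4, 2, 1, 0$ such fields respectively.

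Next I would reduce membership to a congruence. Writing $\alpha = u\pi^r$, the element $\alpha$ lies in $N_{L_j/k_\p}(L_j^*)$ exactly when $u\pi^r = v^4(-\zeta_{q-1}^j\pi)^m$ for some unit $v$ and integer $m$. Comparing $\p$-adic valuations forces $m = r$, so the condition becomes $u \in (-\zeta_{q-1}^j)^r\,\co_{k_\p}^{*4}$. Because $q$ is odd, raising to the fourth power is an automorphism of the principal units $1 + \p$, so $\co_{k_\p}^*/\co_{k_\p}^{*4} \cong \F_\p^*/\F_\p^{*4} \cong \Z/4\Z$, generated by the class of $\zeta_{q-1}$ (here I use $q \equiv 1 \pmod 4$). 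Identifying this quotient with $\Z/4\Z$, writing $a$ for the class of $u$ and $c$ for the class of $-1 = \zeta_{q-1}^{(q-1)/2}$, the membership condition for $L_j$ becomes the linear congruence $rj \equiv a - rc \pmod 4$.

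Finally I would solve this congruence as $j$ ranges over $\{0,1,2,3\}$. When $4 \mid r$ the left side vanishes, giving four solutions if $a \equiv 0$, i.e. $u \in \co_{k_\p}^{*4}$, and none otherwise. When $r \equiv 2 \pmod 4$ the map $j \mapsto rj$ has image $\{0,2\}$, producing exactly two solutions precisely when $a - rc$ is even; since $rc$ is then even, this happens iff $a$ is even, i.e. $u \in \co_{k_\p}^{*2}$, and there are no solutions otherwise. When $r$ is odd, multiplication by $r$ is a bijection of $\Z/4\Z$, so there is exactly one solution regardless of $u$. Multiplying the number of solutions by $\frac{1}{4q^3}$ yields the five stated values.

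The only real subtlety is the bookkeeping in $\co_{k_\p}^*/\co_{k_\p}^{*4}$: one must verify that this quotient is cyclic of order $4$, that the even classes correspond exactly to $\co_{k_\p}^{*2}$, and that the shift by the class $c$ of $-1$ does not disturb the final counts (it does not, since $c$ is absorbed into the parity and divisibility analysis). Everything else is the same routine valuation-matching and pre-mass addition already used in Lemma~\ref{lem-mass-(1^21^2)}.
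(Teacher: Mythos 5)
Your proposal is correct and follows exactly the route the paper intends: the paper's own "proof" of this lemma is just the remark that it is "proved in essentially the same way as Lemma~\ref{lem-mass-(1^21^2)}, using Lemma~\ref{lem-norm-group-(1^4)-1mod4} for the norm groups," and you have carried out precisely that template — each $L_j$ has pre-mass $\frac{1}{4q^3}$, and the casework reduces to the congruence $rj \equiv a - rc \pmod 4$ in $\co_{k_\p}^*/\co_{k_\p}^{*4} \cong \Z/4\Z$. Your count of solutions in each case matches the stated values, so the argument is complete.
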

\begin{lemma}
    \label{lem-mass-(1^4)-3mod4}
    If $q \equiv 3\pmod{4}$, then we have 
    $$
    \m_\p(\Sigma_{\alpha,\p}^{(1^4)}) = \begin{cases}
        \frac{1}{q^3} \quad \text{if $2\mid r$ and $u \in \co_{k_\p}^{* 2}$},
        \\
        0 \quad \text{if $2 \mid r$ and $u \not \in \co_{k_\p}^{* 2}$},
        \\
        \frac{1}{2q^3} \quad \text{if $2\nmid r$}.
    \end{cases}
    $$
\end{lemma}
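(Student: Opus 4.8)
The plan is to follow the template of Lemma~\ref{lem-mass-(1^21^2)}: enumerate the étale algebras with splitting symbol $(1^4)$, record their individual pre-mass contributions, and then decide by a short square-class computation which of them lie in $\Sigma_{\alpha,\p}^{(1^4)}$. First I would invoke Lemma~\ref{lem-norm-group-(1^4)-3mod4}, which for $q\equiv 3\pmod 4$ produces exactly two candidates $L_j = k_\p\big(\sqrt[4]{\zeta_{q-1}^j\pi}\big)$, $j\in\{0,1\}$, together with their norm groups $N_{L_j/k_\p}(L_j^*) = \{v^2(-\zeta_{q-1}^j\pi)^m : v\in\co_{k_\p}^*,\ m\in\Z\}$. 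Since each $L_j$ is a totally tamely ramified quartic field, Lemma~\ref{lem-disc-from-symbol} gives $\Disc_\p(L_j) = q^3$, and an automorphism count shows $\#\Aut(L_j/k_\p) = 2$: an automorphism sends $\sqrt[4]{\zeta_{q-1}^j\pi}$ to $\zeta\sqrt[4]{\zeta_{q-1}^j\pi}$ for a fourth root of unity $\zeta\in L_j$, and because $q\equiv 3\pmod 4$ the only such roots are $\pm 1$. Hence $\m_\p(\{L_j\}) = \frac{1}{2q^3}$ for each $j$, which is consistent with the total $\m_\p(\Sigma_{1,\p}^{(1^4)}) = \frac{1}{q^3}$ from Lemma~\ref{lem-masses-of-splitting-symbols}.

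The heart of the argument is deciding membership in $\Sigma_{\alpha,\p}^{(1^4)}$. Writing $\alpha = u\pi^r$ and comparing valuations, an equality $\alpha = v^2(-\zeta_{q-1}^j\pi)^m$ forces $m = r$, so $\alpha\in N_{L_j/k_\p}(L_j^*)$ reduces to the condition $u \equiv (-\zeta_{q-1}^j)^r \pmod{\co_{k_\p}^{*2}}$. Here I would use the defining feature of the congruence $q\equiv 3\pmod 4$: both $-1$ and $\zeta_{q-1}$ are nonsquares in $\co_{k_\p}^*$, so their product $-\zeta_{q-1}$ is a square. Consequently $(-\zeta_{q-1}^1)^r$ is a square for every $r$, whereas $(-\zeta_{q-1}^0)^r = (-1)^r$ is a square exactly when $r$ is even. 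Tracking these two square classes against that of $u$ then gives: when $2\mid r$, both $L_0$ and $L_1$ lie in $\Sigma_{\alpha,\p}^{(1^4)}$ if $u\in\co_{k_\p}^{*2}$ and neither lies in it otherwise; when $2\nmid r$, exactly one of $L_0,L_1$ lies in $\Sigma_{\alpha,\p}^{(1^4)}$ (namely $L_1$ if $u$ is a square and $L_0$ if it is not). Summing the relevant copies of $\frac{1}{2q^3}$ yields the three stated values $\frac{1}{q^3}$, $0$, and $\frac{1}{2q^3}$.

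The only place demanding care is this square-class bookkeeping in the final step; everything else is mechanical. The simplification that makes it painless — and that distinguishes this case from Lemma~\ref{lem-mass-(1^4)-1mod4} — is that $\co_{k_\p}^{*2} = \co_{k_\p}^{*4}$ when $q\equiv 3\pmod 4$, so the norm-group conditions never involve a genuine fourth-power obstruction and collapse to conditions on squares alone, keeping the partition of $\{L_0,L_1\}$ into those two nonoverlapping parity-and-square cases.
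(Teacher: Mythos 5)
Your proposal is correct and follows exactly the route the paper intends: the paper's own ``proof'' of this lemma is just the remark that it is proved in the same way as Lemma~\ref{lem-mass-(1^21^2)}, using Lemma~\ref{lem-norm-group-(1^4)-3mod4} for the norm groups, which is precisely what you do. Your discriminant and automorphism counts ($\Disc_\p(L_j)=q^3$, $\#\Aut(L_j/k_\p)=2$, consistent with the total $\frac{1}{q^3}$ from Lemma~\ref{lem-masses-of-splitting-symbols}) and the square-class bookkeeping (both $-1$ and $\zeta_{q-1}$ nonsquares when $q\equiv 3\pmod 4$, so $-\zeta_{q-1}$ is a square) are all accurate.
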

We now have complete descriptions of the pre-masses $\m_\p(\Sigma_{\alpha,\p})$, and hence the masses $m_\p(\Sigma_{\alpha,\p})$, for finite primes $\p$ not lying over $2$. For infinite primes, the masses are given by the following two lemmas.
\begin{lemma}
    \label{lem-mass-real-arch}
    Let $f:k \to \C$ be a real embedding, and let $\p_f$ be the prime corresponding to $f$. Then 
    $$
    m_{\p_f}(\Sigma_{\alpha, \p_f}) = \begin{cases}
        \frac{5}{12} \quad \text{if $f(\alpha) > 0$},
        \\
        \frac{7}{24} \quad \text{if $f(\alpha) < 0$}. 
    \end{cases}
    $$
\end{lemma}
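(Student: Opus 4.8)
The plan is to enumerate the finitely many quartic \'etale $\R$-algebras, compute the norm group of each, and then read off which of them lie in $\Sigma_{\alpha,\p_f}$ according to the sign of $f(\alpha)$, finally summing the reciprocal automorphism counts as prescribed by the infinite-prime case of Definition~\ref{def-mass}.

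First I would note that $k_{\p_f} = \R$, and that every quartic \'etale $\R$-algebra is isomorphic to exactly one of
$$
\R^4, \quad \R^2 \times \C, \quad \C^2,
$$
corresponding to the splitting symbols $(1111)$, $(112)$, $(22)$, with $\C$ playing the role of the (unique) unramified quadratic extension. Applying the automorphism count of Definition~\ref{def-aut-and-disc-of-symbol}, or computing directly, their automorphism groups have orders $24$, $4$, and $8$ respectively; for instance $\Aut(\C^2/\R)$ is generated by complex conjugation in each factor together with the swap of the two factors, giving order $8$.

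Next I would compute the norm groups using the multiplicativity $N_{L_1 \times L_2 / \R} = N_{L_1/\R}\cdot N_{L_2/\R}$ together with the two facts $N_{\R/\R}(\R^*) = \R^*$ and $N_{\C/\R}(\C^*) = \R_{>0}$. Since any factor of $\R$ already contributes all of $\R^*$, both $\R^4$ and $\R^2\times\C$ have full norm group $\R^*$, and so lie in $\Sigma_{\alpha,\p_f}$ regardless of $\alpha$. By contrast, $\C^2$ has norm group $\R_{>0}$, so it lies in $\Sigma_{\alpha,\p_f}$ if and only if $f(\alpha) > 0$.

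Finally I would conclude by summing: when $f(\alpha) > 0$ all three algebras qualify, giving
$$
m_{\p_f}(\Sigma_{\alpha,\p_f}) = \tfrac{1}{24} + \tfrac{1}{4} + \tfrac{1}{8} = \tfrac{5}{12},
$$
while when $f(\alpha) < 0$ only $\R^4$ and $\R^2\times\C$ qualify, giving $\tfrac{1}{24} + \tfrac{1}{4} = \tfrac{7}{24}$. The only substantive point — essentially the crux of the lemma — is the norm computation $N_{\C/\R}(\C^*) = \R_{>0}$, since this is exactly what makes the sign of $f(\alpha)$ the deciding factor and separates the two cases; everything else is a short finite bookkeeping of automorphism orders.
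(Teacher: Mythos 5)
Your proof is correct and follows essentially the same route as the paper: enumerate the three quartic \'etale $\R$-algebras $\R^4$, $\R^2\times\C$, $\C\times\C$, observe that only $\C\times\C$ has norm group $\R_{>0}$ rather than $\R^*$, and sum the masses $\tfrac{1}{24}$, $\tfrac{1}{4}$, $\tfrac{1}{8}$ accordingly. The only (immaterial) difference is a labelling convention: the paper treats $\C/\R$ as totally ramified and writes the splitting symbols as $(1111)$, $(111^2)$, $(1^21^2)$, whereas you call $\C$ unramified and write $(112)$, $(22)$; since you compute the automorphism orders and norm groups directly, this does not affect the result.
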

\begin{proof}
    Since $f$ is a real embedding, we have $k_{\p_f} \cong \R$. The algebraic extensions of $\R$ are precisely $\R$ and $\C$. The extension $\C/\R$ is totally ramified, so the possible quartic splitting symbols are $(1111),(111^2), (1^21^2)$. Each of these corresponds to a unique \'etale algebra, and the consequent norm groups and masses are given by the following table. 
    \begin{center}
        \begin{tabular}{|c|c|c|c|}
            \hline
            $\sigma$ & $L$ & $N_{L/k_\p}(L^*)$ & $m_\infty(\{L\})$
            \\
            \hline
            \hline
            $(1111)$ & $\R \times \R \times \R \times \R$ & $\R^*$ & $\frac{1}{24}$
            \\
            \hline
            $(111^2)$ & $\R\times\R\times\C$ & $\R^*$ & $\frac{1}{4}$
            \\
            \hline 
            $(1^21^2)$ & $\C\times\C$ & $\R_{>0}$ & $\frac{1}{8}$
            \\
            \hline
        \end{tabular}
    \end{center} 
    The masses in the table are computed using Definition~\ref{def-mass}. 
\end{proof}
\begin{lemma}
    \label{lem-mass-imaginary-arch}
    Let $g:k\to \C$ be a complex embedding, and let $\p_g$ be the prime corresponding to $g$. Then 
    $$
    m_{\p_g}(\Sigma_{\alpha,\p_g}) = \frac{1}{24}. 
    $$
\end{lemma}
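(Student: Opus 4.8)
The plan is to exploit the fact that a complex embedding gives an algebraically closed completion, which collapses the problem to a single étale algebra. First I would observe that since $g$ is a complex embedding, the completion $k_{\p_g}$ is isomorphic to $\C$. As $\C$ is algebraically closed, its only finite-degree extension is $\C$ itself, so up to isomorphism there is exactly one quartic étale $k_{\p_g}$-algebra, namely
$$
L = \C \times \C \times \C \times \C,
$$
with splitting symbol $(1111)$.

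Next I would determine $\Sigma_{\alpha,\p_g}$. The norm map $N_{\C/\C}$ is the identity, so by multiplicativity $N_{L/k_{\p_g}}(L^*) = \C^* = k_{\p_g}^*$. In particular $\alpha \in N_{L/k_{\p_g}}(L^*)$ regardless of which $\alpha$ we chose, and hence $\Sigma_{\alpha,\p_g} = \{L\}$ is a singleton. This is the step where the value of $\alpha$ drops out entirely, which is why the answer is independent of $\alpha$.

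I would then compute $\#\Aut(L/k_{\p_g})$. Because $\Aut(\C/\C)$ is trivial, every $k_{\p_g}$-algebra automorphism of $\C^4$ must permute the four factors, so $\Aut(L/k_{\p_g}) \cong S_4$, giving $\#\Aut(L/k_{\p_g}) = 24$. Finally, since $\p_g$ is an infinite prime, the definition of mass (Definition~\ref{def-mass}) yields
$$
m_{\p_g}(\Sigma_{\alpha,\p_g}) = \sum_{L' \in \Sigma_{\alpha,\p_g}} \frac{1}{\#\Aut(L'/k_{\p_g})} = \frac{1}{24}.
$$

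There is no real obstacle here: the entire content is that algebraic closedness of $\C$ forces a unique quartic étale algebra with full norm group and automorphism group $S_4$. The only point that warrants a line of care is confirming that the automorphisms are exactly the factor permutations, i.e. that no extra automorphisms arise from $\Aut(\C/\C)$, which is immediate.
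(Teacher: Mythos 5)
Your proof is correct and follows essentially the same route as the paper: algebraic closedness of $\C$ forces $\Sigma_{\alpha,\p_g}$ to be the singleton $\{\C^4\}$ with automorphism group $S_4$, and the mass formula for infinite primes gives $\frac{1}{24}$. The extra remark justifying that the automorphisms are exactly the factor permutations is a harmless elaboration of what the paper states without comment.
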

\begin{proof}
    Since $k_{\p_g} \cong \C$ is algebraically closed, it has only one quartic \'etale algebra, up to isomorphism, and that \'etale algebra is isomorphic to $L = \C\times \C \times \C \times \C$. We have 
    $$
    \#\Aut(L/\C) = 24,
    $$
    so 
    $$
    m_{\p_g}(\{L\}) = \frac{1}{24}.
    $$
    We have $N_{L/\C}(L^*) = \C^*$, so $\Sigma_{\alpha,\p_g} = L$ for all $\alpha$.
\end{proof}
\begin{lemma}
    \label{lem-masses-of-odd-q}
    Let $\alpha \in k^*$ and let $\p$ be a finite prime of $k$ with $N(\p)$ odd. The values of $m_{\alpha,\p}$ are given by Tables~\ref{table-q-1mod4} and \ref{table-q-3mod4}. 
\end{lemma}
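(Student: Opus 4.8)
The plan is to assemble the pre-mass computations of the preceding lemmas. Since the sets $\Sigma_{\alpha,\p}^\sigma$ partition $\Sigma_{\alpha,\p}$ as $\sigma$ ranges over the eleven quartic splitting symbols, we have
$$
\m_\p(\Sigma_{\alpha,\p}) = \sum_\sigma \m_\p(\Sigma_{\alpha,\p}^\sigma),
\qquad
m_{\alpha,\p} = \frac{q-1}{q}\,\m_\p(\Sigma_{\alpha,\p}),
$$
so it suffices to sum the symbol-by-symbol contributions and then rescale. I split the symbols into the six trivial ones and the five non-trivial ones $(22)$, $(4)$, $(1^21^2)$, $(2^2)$, $(1^4)$.

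First I handle the trivial symbols in bulk. By Lemma~\ref{lem-triv-symbols-have-full-norm-group} every \'etale algebra with a trivial symbol has norm group equal to $k_\p^*$, so $\Sigma_{\alpha,\p}^\sigma = \Sigma_{1,\p}^\sigma$ for each trivial $\sigma$, and their combined contribution is the $\alpha$-independent quantity $\m_\p(\Sigma_\p^\triv) = \frac{5q^2+8q+8}{8q^2}$ of Lemma~\ref{lem-mass-triv}. The five non-trivial contributions are then read off directly: Lemma~\ref{lem-mass-(22)} for $(22)$, Lemma~\ref{lem-mass-(4)} for $(4)$, Lemma~\ref{lem-mass-(1^21^2)} for $(1^21^2)$, Lemma~\ref{lem-mass-(2^2)} for $(2^2)$, and Lemma~\ref{lem-mass-(1^4)-1mod4} or Lemma~\ref{lem-mass-(1^4)-3mod4} for $(1^4)$ according to whether $q \equiv 1$ or $3 \pmod 4$.

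Next I organize the tabulation. Each non-trivial contribution depends only on the residue of $r = \ord_\p(\alpha)$ modulo $4$ and on the class of $u$ in $\co_{k_\p}^*$ modulo squares or fourth powers. I therefore split first on $q \bmod 4$, which selects Table~\ref{table-q-1mod4} or Table~\ref{table-q-3mod4} and determines whether the symbol $(1^4)$ supplies four or two \'etale algebras. For $q \equiv 1 \pmod 4$ I subdivide by $r \bmod 4$ and by the class of $u$ in $\co_{k_\p}^*/\co_{k_\p}^{*4} \cong \Z/4$; for $q \equiv 3 \pmod 4$ the identity $\co_{k_\p}^{*2} = \co_{k_\p}^{*4}$ collapses the fourth-power condition into the square condition, so I need only $r \bmod 2$ together with the square class of $u$. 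In each resulting case I add $\m_\p(\Sigma_\p^\triv)$ to the relevant non-trivial pre-masses, multiply by $\frac{q-1}{q}$, and simplify to a rational function of $q$, which I match against the corresponding table entry.

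The work is entirely bookkeeping rather than conceptual, and the only real pitfall is keeping the side conditions on $u$ consistent across the five non-trivial symbols. A convenient sanity check is the case $4 \mid r$ with $u \in \co_{k_\p}^{*4}$: here $\alpha \in k_\p^{*4}$, hence is a norm from every quartic \'etale algebra via the diagonal embedding, and correspondingly every non-trivial symbol contributes its maximal pre-mass. I verify that the case divisions are exhaustive and mutually exclusive, so that each table entry is produced by exactly one combination of the lemma outputs.
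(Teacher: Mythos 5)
Your proposal is the paper's own proof, expanded: the paper simply notes that $m_{\alpha,\p} = \sum_\sigma m_\p(\Sigma_{\alpha,\p}^\sigma)$ and that the result ``follows easily'' from Lemmas~\ref{lem-mass-triv}--\ref{lem-mass-(1^4)-3mod4}, which is exactly the decomposition and bookkeeping you describe. One small correction to your case organization: for $q \equiv 3 \pmod 4$ you claim that only $r \bmod 2$ and the square class of $u$ are needed, but the contributions from $(4)$ (Lemma~\ref{lem-mass-(4)}) and $(2^2)$ (Lemma~\ref{lem-mass-(2^2)}) distinguish $4 \mid r$ from $r \equiv 2 \pmod 4$, which is why the $\bar r = 0$ and $\bar r = 2$ columns of Table~\ref{table-q-3mod4} differ; only the collapse of the fourth-power condition on $u$ into the square condition is justified by $\co_{k_\p}^{*2} = \co_{k_\p}^{*4}$. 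Carrying out the sum with the lemmas as stated forces the correct four-column case division, so the method is sound and the slip is only in your description of it.
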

\begin{proof}
    Since 
    $$
    m_{\alpha,\p} = \sum_\sigma m_\p(\Sigma_{\alpha,\p}^\sigma),
    $$
    the result follows easily from Lemmas~\ref{lem-mass-triv}-\ref{lem-mass-(1^4)-3mod4}. 
\end{proof}

\subsection{Primes lying over $2$}
\label{subsec-primes-over-2}

Let $\p$ be a prime with even norm. The methods above do not apply, because in the wildly ramified case there are many more extensions. However, we have implemented an algorithm in MAGMA that computes $\widetilde{m}_\p(\Sigma_{\alpha,\p})$ for any number field $k$ and any finite prime $\p$ of $k$. This has two applications:
\begin{enumerate}
    \item The algorithm allows us to verify the results from Section~\ref{subsec-masses} for specific primes $\p$ with odd norm. Simultaneously, agreement with our results provides evidence that the algorithm works correctly. 
    \item The algorithm allows us to compute explicit values for $m_{\alpha,\p}(\Sigma_{\alpha,\p})$ for primes $\p$ lying over $2$. In particular, we used the algorithm to compute the values in Table~\ref{table-q-2}.
\end{enumerate}

Our code is in the GitHub repository at \url{https://github.com/Sebastian-Monnet/S4-quartics-prescribed-norms}. The most important function in our code is \newline{\tt{ComputePreMassOfNormSet}}. This takes as input a number {\tt{alpha}}, a $p$-adic field {\tt{BaseField}}, and a list {\tt{Symbols}} of splitting symbols. The function then explicitly computes the set of quartic \'etale algebras of {\tt{BaseField}} such that {\tt{alpha}} is a norm and returns the pre-mass of that set. In our notation, the function computes 
$$
\tilde{m}_\p\Big(\bigcup_{\sigma \in S}\Sigma_{\alpha,\p}^\sigma\Big),
$$ 
given a set $S$ of splitting symbols. 

The MAGMA file in the repository executes two functions: {\tt{PrintPreMassesFor2()}} and {\tt{TestUpTo(20,3)}}. The first of these functions prints the value of $m_2(\Sigma_{\alpha,2})$ for $\alpha = 2^ru$, as $r$ ranges over $\{0,1,2,3\}$ and $u$ ranges over $\{1,3,5,\ldots, 15\}$. We will see in the proof of Lemma~\ref{lem-masses-2} that this is precisely the set of values we need to compute in order to know $m_{\alpha,2}$ for all $\alpha\in\Q^*$. The second function checks, for each prime $p \in \{5,7,11,13,17,19\}$, that the values in Tables~\ref{table-q-1mod4} and~\ref{table-q-3mod4} are correct for all extensions $k_\p/\Q_p$ of degree  at most $3$. The function returns {\tt{true}} if the results agree with those we have proved, and {\tt{false}} otherwise.

\begin{lemma}
    \label{lem-fourth-powers-in-Q2}
    Let $\p$ lie over $2$ and let $\alpha \in \co_{k_\p}^*$. Suppose that $\alpha \equiv 1 \pmod{8\pi}$, for a uniformiser $\pi$ of $k_\p$. Then $\alpha \in \co_{k_\p}^{*4}$.
\end{lemma}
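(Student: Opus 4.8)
The plan is to work inside the filtration of principal units of $\co_{k_\p}^*$ and show that the fourth-power map is surjective onto a sufficiently deep layer. Write $v_\p$ for the normalised valuation on $k_\p$ with $v_\p(\pi) = 1$, let $e = v_\p(2)$ be the absolute ramification index, and set $U^{(n)} = 1 + \pi^n\co_{k_\p}$ for $n \geq 1$. The hypothesis $\alpha \equiv 1 \pmod{8\pi}$ says exactly that $v_\p(\alpha - 1) \geq v_\p(8\pi) = 3e + 1$, i.e.\ $\alpha \in U^{(3e+1)}$. I note at the outset that the naive Hensel's lemma applied to $f(X) = X^4 - \alpha$ at $X = 1$ does \emph{not} suffice: there $v_\p(f(1)) \geq 3e+1$ while $v_\p(f'(1)) = v_\p(4) = 2e$, and the inequality $v_\p(f(1)) > 2v_\p(f'(1)) = 4e$ fails for every $e \geq 1$. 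So a genuinely group-theoretic argument is needed.

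The key step is the following claim: \emph{for every $n > e$, the squaring map $x \mapsto x^2$ carries $U^{(n)}$ onto $U^{(n+e)}$.} To see this, write $2 = w\pi^e$ with $w \in \co_{k_\p}^*$ and expand, for $x = 1 + \pi^n t$ with $t \in \co_{k_\p}$,
$$
x^2 = 1 + 2\pi^n t + \pi^{2n}t^2 = 1 + \pi^{n+e}\big(wt + \pi^{n-e}t^2\big).
$$
Since $n > e$, the term $\pi^{n-e}t^2$ has positive valuation, so $x^2 \in U^{(n+e)}$ and, modulo $\pi^{n+e+1}$, we have $x^2 \equiv 1 + \pi^{n+e}wt$. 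Thus the induced map on successive quotients $U^{(m)}/U^{(m+1)} \to U^{(m+e)}/U^{(m+e+1)}$ is, for each $m \geq n$, multiplication by $\bar w$ on the residue field $\F_\p$, which is a bijection. Because $\co_{k_\p}$ is complete and $\bigcap_m U^{(m)} = \{1\}$, a standard successive-approximation argument promotes these graded bijections to surjectivity of $U^{(n)} \to U^{(n+e)}$: one lifts a given target layer by layer and passes to the limit.

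Applying the claim twice yields the result. Squaring gives a surjection $U^{(2e+1)} \to U^{(3e+1)}$ (valid since $2e+1 > e$), and squaring again gives a surjection $U^{(e+1)} \to U^{(2e+1)}$ (valid since $e+1 > e$); composing, the fourth-power map $x \mapsto x^4$ maps $U^{(e+1)}$ onto $U^{(3e+1)}$. As $\alpha \in U^{(3e+1)}$, there exists $\beta \in U^{(e+1)} \subseteq \co_{k_\p}^*$ with $\beta^4 = \alpha$, which is exactly the assertion $\alpha \in \co_{k_\p}^{*4}$.

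The main obstacle is the second paragraph: one must track valuations carefully to see that squaring lands in the layer $n+e$ rather than $2n$, which is precisely what forces the hypothesis $n > e$ (that is, $n > e/(p-1)$ with $p=2$), and one must then justify the passage from the graded residue-field bijections to an honest surjection via completeness. An alternative route is to use the $2$-adic logarithm and exponential, which for $n > e$ give an isomorphism $U^{(n)} \cong \pi^n\co_{k_\p}$ under which the fourth-power map becomes multiplication by $4$; the same numerology $3e+1 = (e+1) + 2e$ then finishes the proof, at the cost of verifying convergence of $\exp$ and $\log$ at $p = 2$.
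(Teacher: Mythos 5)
Your proof is correct, but it takes a genuinely different route from the paper. You work with the filtration $U^{(n)} = 1 + \pi^n\co_{k_\p}$ and prove the standard fact that for $n > e/(p-1)$ (here $p=2$, so $n>e$) the squaring map carries $U^{(n)}$ onto $U^{(n+e)}$, via the graded bijections $t \mapsto \bar w t$ on $U^{(m)}/U^{(m+1)}$ together with a completeness/successive-approximation argument; iterating twice gives $(U^{(e+1)})^4 \supseteq U^{(3e+1)} \ni \alpha$. The paper instead stays entirely within Hensel's lemma: it substitutes $X \mapsto 2X+1$ into $X^4 - \alpha$ and divides by $8$, obtaining $g(X) = 2X^4 + 4X^3 + 3X^2 + X + \tfrac{1-\alpha}{8} \in \co_{k_\p}[X]$ with $\lvert g(0)\rvert_\p < 1 = \lvert g'(0)\rvert_\p$, so the naive form of Hensel applies directly. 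One remark on your framing: you are right that Hensel applied crudely to $X^4-\alpha$ at $X=1$ fails for every $e \geq 1$, but your conclusion that ``a genuinely group-theoretic argument is needed'' is overstated --- the paper's change of variables is precisely the recentring that rescues Hensel, and it yields a two-line proof. What your approach buys in exchange is structural information and generality: it identifies the exact layer $U^{(3e+1)} \subseteq (U^{(e+1)})^4$, isolates the role of the bound $n > e/(p-1)$, and transfers verbatim to $p$-th powers for any $p$; the cost is the extra work of justifying the passage from graded bijections to an honest surjection (or, in your alternative, the convergence of $\exp$ and $\log$ at $p=2$).
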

\begin{proof}
    Let $f(X) = (2X+1)^4 - \alpha$. Then we have 
    $$
    f(X) = 16X^4 + 32X^3 +24X^2 + 8X + 1 - \alpha,
    $$
    so $g(X) := \frac{1}{8}f(X) \in \co_{k_\p}[X]$, and we have 
    $$
    \lvert g(0) \rvert _\p = \Big\lvert \frac{1 - \alpha}{8} \Big\rvert _\p < 1,
    $$
    whereas 
    $$
    \lvert g'(0) \rvert _\p= \lvert 1 \rvert _\p = 1. 
    $$
    By Hensel's Lemma, the polynomial $g(X)$ has a root in $\co_{k_\p}$, hence $f(X)$ does too.
\end{proof}
\begin{lemma}
    \label{lem-masses-2}
    When $k=\Q$, the masses $m_{\alpha,2}$ are given by Table~\ref{table-q-2}.
\end{lemma}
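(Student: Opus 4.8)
The plan is to reduce the determination of $m_{\alpha,2}$ to finitely many cases and then invoke the MAGMA computation described above. The crucial point is that the mass depends on $\alpha$ only through its class in $\Q_2^*/\Q_2^{*4}$. Indeed, for any quartic \'etale algebra $L/\Q_2$, restricting the norm to the diagonal copy of $\Q_2$ gives $N_{L/\Q_2}(x) = x^4$, so $\Q_2^{*4} \subseteq N_{L/\Q_2}(L^*)$; hence the condition $\alpha \in N_{L/\Q_2}(L^*)$ depends only on $\alpha \bmod \Q_2^{*4}$. It follows that the set $\Sigma_{\alpha,2}$, and therefore the mass $m_{\alpha,2} = \frac{1}{2}\m_2(\Sigma_{\alpha,2})$ (using $N(\p)=2$), depends only on the class of $\alpha$ in $\Q_2^*/\Q_2^{*4}$.

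Next I would fix representatives for this quotient. Writing $\alpha = 2^r u$ with $u \in \Z_2^*$, the relation $2^4 \in \Q_2^{*4}$ shows that only $r \bmod 4$ is relevant, while Lemma~\ref{lem-fourth-powers-in-Q2}, applied with uniformiser $\pi = 2$ so that $8\pi = 16$, shows that any unit congruent to $1 \bmod 16$ lies in $\Q_2^{*4}$. Thus the class of $u$ depends only on $u \bmod 16$, and every $\alpha \in \Q^*$ is congruent modulo $\Q_2^{*4}$ to exactly one $2^r u$ with $r \in \{0,1,2,3\}$ and $u \in \{1,3,5,\ldots,15\}$. These $32$ pairs index the rows and columns of Table~\ref{table-q-2}; that they are pairwise inequivalent (and so exhaust $\Q_2^*/\Q_2^{*4}$) follows because the reduction map induces a surjection $(\Z/16)^* \to \Z_2^*/\Z_2^{*4}$ between two groups of order $8$.

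Finally, it remains to evaluate $\m_2(\Sigma_{\alpha,2})$ for each of these $32$ representatives. Since $\Q_2$ admits only finitely many extensions of degree $4$, there are finitely many quartic \'etale $\Q_2$-algebras, and the function \texttt{ComputePreMassOfNormSet} enumerates them, tests whether $\alpha$ is a norm from each, and returns the weighted sum defining the pre-mass; multiplying by $\frac{1}{2}$ produces the entries of Table~\ref{table-q-2}. The main obstacle is justifying the correctness of this computation in the wildly ramified setting, where the explicit norm-group descriptions of Section~\ref{subsec-norms} are no longer available; I would rely on standard local-field algorithms for enumerating extensions and computing local norm groups, and on the independent consistency check performed by \texttt{TestUpTo}, which verifies that the same code reproduces the values of Tables~\ref{table-q-1mod4} and~\ref{table-q-3mod4} that we established by hand for small odd primes.
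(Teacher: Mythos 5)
Your proposal is correct and follows essentially the same route as the paper's proof: reduce to the $32$ representatives $2^r u$ of $\Q_2^*/\Q_2^{*4}$ via Lemma~\ref{lem-fourth-powers-in-Q2} and the observation that $\Q_2^{*4}\subseteq N_{L/\Q_2}(L^*)$ for every quartic \'etale algebra $L$, then delegate the finite computation to the MAGMA code and halve the pre-masses. The extra details you supply (the diagonal-embedding argument for $N_{L/\Q_2}(x)=x^4$ and the order-$8$ count showing the representatives are pairwise inequivalent) are correct elaborations of what the paper leaves implicit.
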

\begin{proof}
    Lemma~\ref{lem-fourth-powers-in-Q2} implies that a system of representatives for $\Q_2^*/\Q_2^{*4}$ is given by $2^ru$, as $r$ ranges over $\{0,1,2,3\}$ and $u$ ranges over $\{1,3,5,\ldots, 15\}$. Since $\Q_2^{*4}\subseteq N_{L/\Q_2}(L^*)$ for any quartic \'etale algebra $L/\Q_2$, it suffices to compute 
    $$
    m_{2}(\Sigma_{\alpha,2})
    $$
    for each such $\alpha = 2^ru$. The corresponding pre-masses are outputted by the code in our repository, and we obtain the masses by halving the pre-masses.  
\end{proof}

\begin{proof}[Proof of Theorem~\ref{thm-values-of-masses}]
    The result is immediate from Lemmas~\ref{lem-masses-of-odd-q}, \ref{lem-mass-real-arch}, \ref{lem-mass-imaginary-arch}, and \ref{lem-masses-2}. 
\end{proof}
\section{Proportion of Extensions with Prescribed Norms}
\label{sec-proportion}
The goal of this section is to prove Theorem~\ref{thm-prop-subgroup-between-0-1}. Recall that, given a finite prime $\p$ of $k$, we often write $q$ for the norm $N(\p)$ of $\p$.  
\begin{lemma}
    \label{lem-mass-upper-bound}
    Let $\alpha \in k^*$, and let $\p$ be a finite prime of $k$ such that the following two conditions hold:
    \begin{enumerate}
        \item $q$ is odd,
        \item $4 \mid \ord_\p(\alpha)$. 
    \end{enumerate}
    Then $m_{\alpha,\p} < 1 + \frac{1}{q^2}$. 
\end{lemma}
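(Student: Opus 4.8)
The plan is to sidestep the case analysis that the per-symbol mass computations would otherwise force, by exploiting a simple monotonicity. First I would observe that $1 = N_{L/k_\p}(1)$ lies in $N_{L/k_\p}(L^*)$ for \emph{every} quartic \'etale algebra $L/k_\p$, so that $\Sigma_{1,\p}$ is the set of \emph{all} quartic \'etale $k_\p$-algebras, and hence $\Sigma_{\alpha,\p}\subseteq\Sigma_{1,\p}$. Since the pre-mass $\m_\p(\Sigma_\p)=\sum_{L\in\Sigma_\p}\frac{1}{\Disc_\p(L/k_\p)\#\Aut(L/k_\p)}$ is a sum of strictly positive terms over the algebras in the set, it is monotone under inclusion, and the same holds for $m_\p=\frac{q-1}{q}\m_\p$. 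This gives $m_{\alpha,\p}\le m_{1,\p}$, so it suffices to prove $m_{1,\p}<1+\frac{1}{q^2}$. Note that this reduction uses neither hypothesis: the conditions that $q$ be odd and $4\mid\ord_\p(\alpha)$ are stronger than the bound needs, and I only invoke oddness of $q$ so that the explicit per-symbol values are available.

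Next I would compute $m_{1,\p}$ exactly. By definition $\Sigma_{1,\p}^\sigma$ is the set of all quartic \'etale algebras with splitting symbol $\sigma$, so summing the values of $\m_\p(\Sigma_{1,\p}^\sigma)$ from Lemma~\ref{lem-masses-of-splitting-symbols} over all eleven quartic splitting symbols gives
\[
\m_\p(\Sigma_{1,\p})=1+\frac{1}{q}+\frac{2}{q^2}+\frac{1}{q^3}=\frac{q^3+q^2+2q+1}{q^3},
\]
where the five $q$-independent contributions $\tfrac{1}{24}+\tfrac14+\tfrac13+\tfrac18+\tfrac14$ collapse to $1$. Multiplying by $\frac{q-1}{q}$ via the relation $m_\p(\Sigma_\p)=\frac{q-1}{q}\,\m_\p(\Sigma_\p)$ then yields
\[
m_{1,\p}=\frac{(q-1)(q^3+q^2+2q+1)}{q^4}.
\]

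Finally I would expand the numerator as $q^4+q^2-q-1$ and rewrite
\[
m_{1,\p}=1+\frac{1}{q^2}-\frac{q+1}{q^4},
\]
whence $m_{\alpha,\p}\le m_{1,\p}<1+\frac{1}{q^2}$, since the subtracted term is positive. There is essentially no obstacle here: the only idea is the monotonicity reduction, after which everything is a short rational-function computation, the only subtlety being to include all eleven symbols correctly. Should one instead prefer to stay within the case-by-case framework of Section~\ref{subsec-masses}, one could bound each $\m_\p(\Sigma_{\alpha,\p}^\sigma)$ for the overramified symbols by its maximal value using Lemmas~\ref{lem-mass-(22)}--\ref{lem-mass-(1^4)-3mod4} (here the hypothesis $4\mid\ord_\p(\alpha)$ ensures each attains that maximum), add the trivial contribution from Lemma~\ref{lem-mass-triv}, and recover the same bound, at the cost of more bookkeeping across the classes $q\equiv1$ and $q\equiv3\pmod 4$.
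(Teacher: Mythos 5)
Your proof is correct, but it takes a genuinely different route from the paper's. The paper's proof is a one-line appeal to Tables~\ref{table-q-1mod4} and~\ref{table-q-3mod4}: under the hypotheses one is in the column $\bar r=0$, and each of the finitely many entries there is checked to be less than $1+\frac{1}{q^2}$ (e.g.\ $\frac{(q^3+q^2+2q+1)(q-1)}{q^4}=1+\frac{1}{q^2}-\frac{q+1}{q^4}$, and similarly for the other rows). You instead observe that $\Sigma_{\alpha,\p}\subseteq\Sigma_{1,\p}$ and that masses are monotone under inclusion, reducing everything to the single computation $m_{1,\p}=\frac{(q-1)(q^3+q^2+2q+1)}{q^4}=1+\frac{1}{q^2}-\frac{q+1}{q^4}$, which follows from summing the eleven entries of Lemma~\ref{lem-masses-of-splitting-symbols}. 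Your sum and the final expansion are correct. What your approach buys is generality and robustness: since Lemma~\ref{lem-masses-of-splitting-symbols} holds for every finite prime, your argument proves the bound $m_{\alpha,\p}<1+\frac{1}{q^2}$ for \emph{all} $\alpha$ and all finite $\p$, with neither hypothesis needed; the paper's hypotheses are artifacts of its reliance on the tables (which cover only odd $q$) and the $\bar r=0$ column. What the paper's route buys is that it comes for free once the tables are assembled, and it exhibits the exact values rather than just a bound. One minor quibble: in your closing aside about the case-by-case alternative, the parenthetical claim that $4\mid\ord_\p(\alpha)$ makes each overramified pre-mass attain its maximum is not quite right (e.g.\ for $(1^21^2)$ with $u\notin\co_{k_\p}^{*2}$ one gets $\frac{1}{4q^2}$ rather than $\frac{1}{2q^2}$), but since you are bounding above by the maxima anyway, this does not affect the argument.
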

\begin{proof}
    This is immediate from Tables~\ref{table-q-1mod4} and~\ref{table-q-3mod4}.
\end{proof}
Let $\alpha \in k^*$. In the following lemma, we write $S$ for the set of primes $\p$ of $k$ such that at least one of the following holds:
\begin{enumerate}
    \item $\p$ is finite and $q$ is even,
    \item $\p$ is finite and $4 \nmid \ord_\p(\alpha)$,
    \item $\p$ is infinite.
\end{enumerate}
Then $S$ is a finite set, so we can explicitly compute the mass $m_{\alpha,\p}$ for every $\p \in S$. 
\begin{lemma}
    \label{lem-lim-finite}
    Let $\alpha \in k^*$, and let $S$ be the set of primes defined above. We have an explicit finite bound
    $$
    \lim_{X\to\infty}\frac{N(X;\alpha)}{X} \leq \frac{1}{2}\operatornamewithlimits{Res}_{s=1}\zeta_k(s)\cdot \zeta_{k,S}(2)\cdot\prod_{\p \in S}m_{\alpha,\p},
    $$
    where $\zeta_{k,S}(s) = \prod_{p\not \in S}(1 - (N\p)^{-s})^{-1}$ is the partial zeta function.
\end{lemma}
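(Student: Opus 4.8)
The plan is to start from the exact Euler product for the limit and bound the contribution of the primes outside $S$ by a convergent tail. Specializing Theorem~\ref{thm-counting-function-subgroup} to $\mathcal{A}=\langle\alpha\rangle$ gives the identity
$$
\lim_{X\to\infty}\frac{N(X;\alpha)}{X}=\frac{1}{2}\operatornamewithlimits{Res}_{s=1}\zeta_k(s)\prod_\p m_{\alpha,\p},
$$
so it suffices to establish that $\prod_\p m_{\alpha,\p}$ converges and is at most $\zeta_{k,S}(2)\prod_{\p\in S}m_{\alpha,\p}$. Since $S$ is finite, I would factor the product as $\prod_\p m_{\alpha,\p}=\big(\prod_{\p\in S}m_{\alpha,\p}\big)\big(\prod_{\p\notin S}m_{\alpha,\p}\big)$, where the first factor is an explicitly computable finite product, leaving the tail $\prod_{\p\notin S}m_{\alpha,\p}$ as the only thing to control.

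By definition of $S$, every $\p\notin S$ is finite, has odd norm $q$, and satisfies $4\mid\ord_\p(\alpha)$, so the hypotheses of Lemma~\ref{lem-mass-upper-bound} are met and we obtain $m_{\alpha,\p}<1+q^{-2}$. The key elementary input is then the inequality
$$
1+\frac{1}{q^2}<\frac{1}{1-q^{-2}},
$$
valid for all $q>1$ because the difference of the two sides equals $\frac{1}{q^2(q^2-1)}>0$. Taking the product over all $\p\notin S$ yields
$$
\prod_{\p\notin S}m_{\alpha,\p}<\prod_{\p\notin S}\frac{1}{1-(N\p)^{-2}}=\zeta_{k,S}(2),
$$
and multiplying by the finite factor $\prod_{\p\in S}m_{\alpha,\p}$ gives exactly the asserted bound.

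The only genuine subtlety—and indeed the whole point of the lemma—is the convergence of the infinite product, which is what guarantees the limit is finite. This comes for free from the estimate above: each $m_{\alpha,\p}$ is positive (as recorded in the proof of Theorem~\ref{thm-counting-function-subgroup}), and for $\p\notin S$ it is dominated termwise by $(1-(N\p)^{-2})^{-1}$, whose product over $\p\notin S$ converges to $\zeta_{k,S}(2)\le\zeta_k(2)<\infty$. Hence the tail product converges, the full Euler product converges, and the displayed inequality follows.
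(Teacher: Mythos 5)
Your proof is correct and follows essentially the same route as the paper: apply Theorem~\ref{thm-counting-function-subgroup}, split the Euler product over $S$ and its complement, and bound each factor outside $S$ via Lemma~\ref{lem-mass-upper-bound} together with the comparison $1+q^{-2}\leq(1-q^{-2})^{-1}$ to obtain $\zeta_{k,S}(2)$. No gaps.
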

\begin{proof}
    It is easy to see that $S$ is a finite set, so we have
    $$
    \prod_{\p \in S} m_{\alpha,\p} < \infty.
    $$
    By Theorem~\ref{thm-counting-function-subgroup}, we have 
    $$
    \lim_{X\to\infty}\frac{N(X;\alpha)}{X} = \frac{1}{2}\operatornamewithlimits{Res}_{s=1}\zeta_k(s) \prod_{\p \in S}m_{\alpha,\p} \cdot \prod_{\p \not \in S}m_{\alpha,\p},
    $$
    so it suffices to show that 
    $$
    \prod_{\p\not\in S}m_{\alpha,\p} \leq \zeta_{k,S}(2).
    $$
    Note that $S$ is precisely the set of primes to which Lemma~\ref{lem-mass-upper-bound} does not apply, so we have 
    \begin{align*}
        \prod_{\p\not\in S}m_{\alpha,\p} &\leq \prod_{\p\not\in S} \Big(1 + \frac{1}{N(\p)^2}\Big)
        \\
        &\leq \prod_{\p \not \in S} \Big(1 + \frac{1}{N(\p)^2} + \frac{1}{N(\p)^4} + \ldots \Big)
        \\
        &= \zeta_{k,S}(2). 
    \end{align*}
\end{proof}

\begin{lemma}
    \label{lem-subgroup-all-masses-positive}
    Let $\mathcal{A}\subseteq k^*$ be a finitely generated subgroup and let $\p$ be any prime of $k$. Then $m_{\mathcal{A},\p} > 0$.
\end{lemma}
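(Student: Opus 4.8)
The plan is to reduce the positivity of $m_{\mathcal{A},\p}$ to the nonemptiness of the set $\Sigma_{\mathcal{A},\p}$, and then to exhibit an explicit element of that set.

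First I would recall from Definition~\ref{def-mass} that $m_{\mathcal{A},\p} = m_\p(\Sigma_{\mathcal{A},\p})$ is, at both finite and infinite primes, a sum over $L \in \Sigma_{\mathcal{A},\p}$ of the strictly positive quantities $\frac{1}{\#\Aut(L/k_\p)}$ (weighted at finite primes by the strictly positive factors $\frac{1}{\Disc_\p(L/k_\p)}$ and the overall positive constant $\frac{N(\p)-1}{N(\p)}$). Every summand is therefore strictly positive, so $m_{\mathcal{A},\p} > 0$ precisely when the index set $\Sigma_{\mathcal{A},\p}$ is nonempty, whereas an empty sum would give $m_{\mathcal{A},\p} = 0$. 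Thus the entire problem reduces to producing a single quartic \'etale algebra $L/k_\p$ with $\mathcal{A} \subseteq N_{L/k_\p}(L^*)$.

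For this I would take the totally split algebra $L = k_\p \times k_\p \times k_\p \times k_\p$. Since $N_{k_\p/k_\p}$ is the identity map and the norm group of a product of \'etale algebras is the product of the norm groups of the factors (as used in the proof of Lemma~\ref{lem-norm-group-(1^21^2)}), we get $N_{L/k_\p}(L^*) = k_\p^*$, whence $\mathcal{A} \subseteq k_\p^* = N_{L/k_\p}(L^*)$ and $L \in \Sigma_{\mathcal{A},\p}$. At a finite prime this is precisely the observation, already recorded via Lemma~\ref{lem-triv-symbols-have-full-norm-group}, that $\Sigma_\p^\triv \subseteq \Sigma_{\mathcal{A},\p}$, and $\Sigma_\p^\triv$, which contains $L$, is nonempty; at an infinite prime the same split witness works ($\R \times \R \times \R \times \R$ when $k_\p \cong \R$, and the unique quartic algebra $\C \times \C \times \C \times \C$ when $k_\p \cong \C$), consistent with the strictly positive values already found in Lemmas~\ref{lem-mass-real-arch} and~\ref{lem-mass-imaginary-arch}.

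There is no genuine obstacle in this argument: every term defining the mass is positive, so all that is needed is the existence of one admissible algebra, and the split algebra always qualifies because its norm group is the full group $k_\p^*$ and hence contains $\mathcal{A}$. The only point deserving a word of care is treating the infinite primes uniformly with the finite ones, and this is dispatched by the very same split-algebra witness.
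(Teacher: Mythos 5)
Your proof is correct and follows essentially the same route as the paper: the paper also bounds $m_{\mathcal{A},\p}$ below by the mass of algebras with full norm group (via $m_\p(\Sigma_{\mathcal{A},\p}) \geq m_\p(\Sigma_\p^{\triv}) > 0$ at finite primes, and Lemmas~\ref{lem-mass-real-arch} and~\ref{lem-mass-imaginary-arch} at infinite primes), of which your single split witness $k_\p^4$ is a special case.
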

\begin{proof}
    If $\p$ is archimedean, the result follows from Lemmas~\ref{lem-mass-real-arch} and \ref{lem-mass-imaginary-arch}. 
    If $\p$ is finite, then 
    $$
    m_\p(\Sigma_{\mathcal{A},\p}) \geq m_\p(\Sigma_\p^\triv) = \frac{q-1}{q}\cdot\frac{5q^2 + 8q + 8}{8q^2},
    $$
    by Lemma~\ref{lem-mass-triv}. 
\end{proof}
\begin{lemma}
    \label{lem-subgroup-counting-function-lim-positive}
    For any finitely generated subgroup $\mathcal{A}\subseteq k^*$, we have 
    $$
    \lim_{X\to\infty} \frac{N(X;\mathcal{A})}{X} > 0. 
    $$
\end{lemma}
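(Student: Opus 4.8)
The plan is to apply Theorem~\ref{thm-counting-function-subgroup}, which gives
$$
\lim_{X\to\infty}\frac{N(X;\mathcal{A})}{X} = \frac{1}{2}\operatornamewithlimits{Res}_{s=1}\big(\zeta_k(s)\big)\prod_\p m_{\mathcal{A},\p},
$$
and to show the right-hand side is strictly positive. The factor $\frac{1}{2}\operatorname{Res}_{s=1}\zeta_k(s)$ is positive by the analytic class number formula, so everything reduces to proving that the Euler product $\prod_\p m_{\mathcal{A},\p}$ is positive. Each individual factor is positive by Lemma~\ref{lem-subgroup-all-masses-positive}, so the only danger is that infinitely many factors are bounded away from $1$ and the product degenerates to $0$.

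To rule this out, I would fix a finite generating set $\alpha_1,\dots,\alpha_n$ of $\mathcal{A}$ and let $S$ be the finite set of primes appearing in the proof of Lemma~\ref{lem-acceptable}, namely those $\p$ that are infinite, have even norm, or satisfy $4\nmid v_\p(\alpha_i)$ for some $i$. For $\p\in S$ there is nothing to do, since a finite product of the positive numbers $m_{\mathcal{A},\p}$ is positive. The heart of the matter is a sufficiently strong lower bound for $\p\notin S$.

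For such $\p$, Lemma~\ref{lem-acceptable-precursor} together with Lemma~\ref{lem-sigma-of-subgroup-eq-intersection} shows that $\Sigma_{\mathcal{A},\p}$ contains every quartic \'etale algebra $L/k_\p$ with $v_\p(\Disc(L/k_\p))<2$. By Definition~\ref{def-aut-and-disc-of-symbol} these are precisely the algebras whose splitting symbol lies in $\{(1111),(112),(13),(22),(4),(1^211),(1^22)\}$, and summing the corresponding rows of Lemma~\ref{lem-masses-of-splitting-symbols} gives $\m_\p(\Sigma_{\mathcal{A},\p})\ge 1+\frac1q$. Hence
$$
m_{\mathcal{A},\p}=\frac{q-1}{q}\,\m_\p(\Sigma_{\mathcal{A},\p})\ge \frac{q-1}{q}\Big(1+\frac1q\Big)=1-\frac{1}{N(\p)^2}.
$$
It is essential here to capture the \emph{unramified} symbols $(22)$ and $(4)$, not merely the trivial symbols: the crude bound $m_{\mathcal{A},\p}\ge m_\p(\Sigma_\p^\triv)$ only yields a factor tending to $\frac58$, whose product over all $\p$ would vanish. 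This is the step I expect to be the main obstacle, and it is exactly where the acceptability input from Section~\ref{subsec-acceptable} does the essential work.

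Finally, since the Euler product for the Dedekind zeta function gives $\prod_\p\big(1-N(\p)^{-2}\big)=\zeta_k(2)^{-1}$, a positive real number, the decreasing partial products of $\big(1-N(\p)^{-2}\big)$ stay $\ge\zeta_k(2)^{-1}$. Combining this with the positivity of the finite factor $\prod_{\p\in S}m_{\mathcal{A},\p}$ shows that every partial product of $\prod_\p m_{\mathcal{A},\p}$ is bounded below by $\big(\prod_{\p\in S}m_{\mathcal{A},\p}\big)\zeta_k(2)^{-1}>0$, whence $\prod_\p m_{\mathcal{A},\p}>0$ and the claimed strict inequality follows.
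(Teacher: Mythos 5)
Your proposal is correct and follows essentially the same route as the paper: both reduce to positivity of the Euler product via Theorem~\ref{thm-counting-function-subgroup}, isolate a finite bad set $S$ of primes, and bound $m_{\mathcal{A},\p}$ below at the remaining primes by including the unramified symbols $(22)$ and $(4)$ alongside the symbols with full norm group, then compare with a value of the Dedekind zeta function. The only (immaterial) difference is that the paper also keeps the trivial symbol $(1^31)$, obtaining the bound $1-N(\p)^{-3}$ and comparing with $\zeta_k(3)^{-1}$ rather than your $1-N(\p)^{-2}$ and $\zeta_k(2)^{-1}$.
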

\begin{proof}
    Let $\alpha_1,\ldots, \alpha_n$ be a finite generating set for $\mathcal{A}$, so that $\Sigma_{\mathcal{A},\p} = \cap_i \Sigma_{\alpha_i,\p}$ by Lemma \ref{lem-sigma-of-subgroup-eq-intersection}. Let $S$ be the set of primes of $k$ such that at least one of the following conditions holds:
    \begin{enumerate}
        \item $\p$ is finite and $q$ is even,
        \item $\p$ is finite and $q = 3$,
        \item $4\nmid \ord_\p(\alpha_i)$ for some $i$,
        \item $\p$ is infinite. 
    \end{enumerate}
    It is easy to see that $S$ is a finite set. Moreover, if $\p \not \in S$, then 
    $$
    \Sigma_{\mathcal{A},\p} \supseteq \Sigma_{\p}^\triv \cup \Sigma_{1,\p}^{(22)} \cup \Sigma_{1,\p}^{(4)},
    $$
    by Lemmas~\ref{lem-triv-symbols-have-full-norm-group}, \ref{lem-norm-group-(22)}, and \ref{lem-norm-group-(4)}.
    Lemmas~\ref{lem-mass-triv},~\ref{lem-mass-(22)}, and~\ref{lem-mass-(4)} tell us that, for all finite $\p$, we have
    $$
    \m_\p(\Sigma_{\p}^\triv \cup \Sigma_{1,\p}^{(22)} \cup \Sigma_{1,\p}^{(4)}) = \frac{q^2 + q + 1}{q^2}.
    $$
    It follows that 
    $$
    m_\p(\Sigma_{\p}^\triv \cup \Sigma_{1,\p}^{(22)} \cup \Sigma_{1,\p}^{(4)}) =  1 - \frac{1}{q^3}. 
    $$ 
    Taking the product over all finite primes $\p$ of $k$, we have 
    $$
    \prod_{\p\nmid \infty} \Big(1 - \frac{1}{N(\p)^3}\Big) = \zeta_k(3)^{-1},
    $$
    which is greater than $0$. Since $S$ is finite, it follows that 
    $$
    \prod_{\p \not \in S} m_{\mathcal{A},\p} \geq \prod_{\p \not \in S} \Big(1 - \frac{1}{N(\p)^3}\Big) > 0.
    $$
    Since $S$ is finite, Lemma~\ref{lem-subgroup-all-masses-positive} implies that 
    $$
    \prod_{\p \in S} m_{\mathcal{A},\p} > 0. 
    $$
    The result now follows from Theorem~\ref{thm-counting-function-subgroup}.
\end{proof}
\begin{lemma}
    \label{lem-subgroup-counting-function-lim-finite}
    For any finitely generated subgroup $\mathcal{A}\subseteq k^*$, we have 
    $$
    \lim_{X\to\infty} \frac{N(X;\mathcal{A})}{X} < \infty . 
    $$
\end{lemma}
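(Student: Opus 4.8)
The plan is to reduce the finiteness statement for a general finitely generated subgroup to the single-element case already settled in Lemma~\ref{lem-lim-finite}. The guiding observation is that demanding more elements be norms can only shrink the family of fields being counted, so $N(X;\mathcal{A})$ is dominated by $N(X;\alpha)$ for any single $\alpha\in\mathcal{A}$.

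Concretely, I would first fix a finite generating set $\alpha_1,\ldots,\alpha_n$ of $\mathcal{A}$ and put $\alpha=\alpha_1$; any element of $\mathcal{A}$ works here, including $\alpha=1$ in the degenerate case where $\mathcal{A}$ is trivial. Since $\alpha\in\mathcal{A}$, every $S_4$-quartic extension $K/k$ with $\mathcal{A}\subseteq N_{K/k}(K^*)$ satisfies in particular $\alpha\in N_{K/k}(K^*)$. Restricting to $K$ of bounded discriminant, this yields the pointwise inequality
$$
N(X;\mathcal{A}) \leq N(X;\alpha) \quad \text{for all } X>0.
$$

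Next I would note that both limits $\lim_{X\to\infty}N(X;\mathcal{A})/X$ and $\lim_{X\to\infty}N(X;\alpha)/X$ exist, by Theorem~\ref{thm-counting-function-subgroup} applied to $\mathcal{A}$ and to $\langle\alpha\rangle$ respectively. The inequality above therefore passes to the limit, giving
$$
\lim_{X\to\infty}\frac{N(X;\mathcal{A})}{X} \leq \lim_{X\to\infty}\frac{N(X;\alpha)}{X}.
$$
By Lemma~\ref{lem-lim-finite} the right-hand side is bounded above by the explicit finite quantity $\tfrac{1}{2}\operatornamewithlimits{Res}_{s=1}\zeta_k(s)\cdot\zeta_{k,S}(2)\cdot\prod_{\p\in S}m_{\alpha,\p}$, which completes the argument.

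There is essentially no hard step remaining, since the substantive work was carried out in Lemma~\ref{lem-lim-finite}; the main point is simply to exploit monotonicity in $\mathcal{A}$. If one instead prefers to argue at the level of masses, the same conclusion follows from Lemma~\ref{lem-sigma-of-subgroup-eq-intersection}, which gives $\Sigma_{\mathcal{A},\p}\subseteq\Sigma_{\alpha,\p}$ and hence $m_{\mathcal{A},\p}\leq m_{\alpha,\p}$ for every $\p$; comparing the Euler products of Theorem~\ref{thm-counting-function-subgroup} termwise then delivers the bound. The only place calling for a word of care in that variant is the termwise comparison of infinite products, which is legitimate because every mass is positive and the product for $\alpha$ converges by Lemma~\ref{lem-lim-finite}; the counting-function argument above sidesteps this subtlety altogether.
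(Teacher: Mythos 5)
Your argument is correct and is essentially identical to the paper's proof, which likewise observes that $N(X;\mathcal{A}) \leq N(X;\alpha)$ for any $\alpha \in \mathcal{A}$ and then invokes the explicit finite bound of Lemma~\ref{lem-lim-finite}. The additional remarks about the mass-level variant are fine but not needed.
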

\begin{proof}
    This follows immediately from Lemma~\ref{lem-lim-finite}, since $N(X;\mathcal{A}) \leq N(X;\alpha)$ for all $\alpha \in \mathcal{A}$. 
\end{proof}
The following lemma is elementary.
\begin{lemma}[Elementary analysis lemma]
    \label{lem-analysis-lem}
    Let $(a_n)$ and $(b_n)$ be sequences of real numbers, and suppose that $A = \prod_{n=1}^\infty a_n$ and $B = \prod_{n=1}^\infty b_n$ are conditionally convergent products. Then the product
    $$
    \prod_{n=1}^\infty \frac{a_n}{b_n}
    $$
    is conditionally convergent to $\frac{A}{B}$. 
\end{lemma}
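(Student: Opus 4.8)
The plan is to reduce the statement about infinite products to an elementary statement about the sequences of partial products, and then invoke the ordinary limit laws. Write $P_N = \prod_{n=1}^N a_n$, $Q_N = \prod_{n=1}^N b_n$, and $R_N = \prod_{n=1}^N \frac{a_n}{b_n}$ for the respective partial products. By the definition of convergence of an infinite product, the hypotheses give $P_N \to A$ and $Q_N \to B$ as $N \to \infty$, where crucially $B \neq 0$: convergence of an infinite product is taken to mean that the partial products tend to a finite, nonzero limit. In particular $Q_N \neq 0$ for every $N$, and each individual factor $b_n$ is nonzero, so the quotient $\frac{a_n}{b_n}$ is well-defined for all $n$.

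First I would record the purely algebraic identity $R_N = P_N/Q_N$, which holds for each $N$ since a finite product factors through division. Next, because $Q_N \to B$ with $B \neq 0$, the standard limit law for quotients of convergent sequences (equivalently, continuity of $(x,y) \mapsto x/y$ at points with $y \neq 0$) yields $R_N = P_N/Q_N \to A/B$. This shows that the partial products $R_N$ converge to $A/B$, which is precisely the assertion that $\prod_{n=1}^\infty \frac{a_n}{b_n}$ converges to $\frac{A}{B}$. Since we never reorder or regroup the factors—the index $n$ runs in the same order throughout—no rearrangement issues arise, and whether the input products converge conditionally or absolutely is irrelevant to the argument.

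I do not expect a genuine obstacle here; the only point requiring care is the bookkeeping around the definition of convergence of an infinite product. The single fact doing all the work is that convergence entails a nonzero limit $B$, which simultaneously guarantees that the factors $b_n$ (and hence the quotients $\frac{a_n}{b_n}$) are defined and licenses the application of the quotient limit law. Everything beyond that is just the continuity of division.
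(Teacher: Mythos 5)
Your proof is correct. The paper itself gives no proof of this lemma (it simply labels it ``elementary'' and moves on), and your argument --- passing to partial products, noting $R_N = P_N/Q_N$ with $Q_N \to B \neq 0$, and invoking the quotient limit law --- is exactly the standard argument the author is implicitly relying on, including the key observation that convergence of an infinite product entails a nonzero limit so that the division is licensed.
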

\begin{lemma}
    \label{lem-product-for-prop}
    We have 
    $$
    \lim_{X\to\infty} \frac{N(X;\mathcal{A})}{N(X)} = \prod_{\p} \frac{m_{\mathcal{A},\p}}{m_{1,\p}}.
    $$
\end{lemma}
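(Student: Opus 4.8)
The plan is to reduce everything to Theorem~\ref{thm-counting-function-subgroup}, applied twice: once to $\mathcal{A}$ itself and once to the trivial subgroup $\{1\}$. First I would observe that every $S_4$-quartic extension $K/k$ satisfies $1 \in N_{K/k}(K^*)$ trivially, so that $N(X) = N(X;\{1\})$, and correspondingly $m_{1,\p} = m_\p(\Sigma_{1,\p})$ is the mass of the set of \emph{all} quartic \'etale $k_\p$-algebras. Theorem~\ref{thm-counting-function-subgroup} then supplies the two asymptotic formulas
$$
\lim_{X\to\infty}\frac{N(X;\mathcal{A})}{X} = \frac{1}{2}\operatornamewithlimits{Res}_{s=1}\big(\zeta_k(s)\big)\prod_\p m_{\mathcal{A},\p}, \qquad \lim_{X\to\infty}\frac{N(X)}{X} = \frac{1}{2}\operatornamewithlimits{Res}_{s=1}\big(\zeta_k(s)\big)\prod_\p m_{1,\p}.
$$

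Next I would record that both limits are positive and finite. For $N(X;\mathcal{A})$ this is exactly the content of Lemmas~\ref{lem-subgroup-counting-function-lim-positive} and~\ref{lem-subgroup-counting-function-lim-finite}; for $N(X)$ it is the special case $\mathcal{A}=\{1\}$ of those same two lemmas. Since the common prefactor $\tfrac{1}{2}\operatornamewithlimits{Res}_{s=1}\big(\zeta_k(s)\big)$ is a fixed positive real number, it follows that each of the infinite products $\prod_\p m_{\mathcal{A},\p}$ and $\prod_\p m_{1,\p}$ converges to a positive, finite value, and so in particular each is conditionally convergent in the sense required by Lemma~\ref{lem-analysis-lem}.

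With convergence in hand, I would divide the two asymptotic formulas. The prefactor cancels, and because the denominator $\lim_{X\to\infty} N(X)/X$ is nonzero, the ratio of the limits equals the limit of the ratio, giving
$$
\lim_{X\to\infty}\frac{N(X;\mathcal{A})}{N(X)} = \frac{\prod_\p m_{\mathcal{A},\p}}{\prod_\p m_{1,\p}}.
$$
Finally I would invoke Lemma~\ref{lem-analysis-lem}, enumerating the primes $\p$ and setting $a_n = m_{\mathcal{A},\p}$ and $b_n = m_{1,\p}$, to rewrite this quotient of products as the product of quotients $\prod_\p \tfrac{m_{\mathcal{A},\p}}{m_{1,\p}}$, which is the desired identity.

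The only genuine obstacle is justifying the manipulation of infinite products: one cannot split $\big(\prod_\p m_{\mathcal{A},\p}\big)\big/\big(\prod_\p m_{1,\p}\big)$ into $\prod_\p\big(m_{\mathcal{A},\p}/m_{1,\p}\big)$ without first knowing both products converge to nonzero values. That convergence is precisely what the positivity and finiteness lemmas provide, and Lemma~\ref{lem-analysis-lem} then legitimizes the rearrangement; everything else is bookkeeping.
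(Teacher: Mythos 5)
Your proposal is correct and follows essentially the same route as the paper: apply Theorem~\ref{thm-counting-function-subgroup} to both $\mathcal{A}$ and $\{1\}$, use Lemmas~\ref{lem-subgroup-counting-function-lim-positive} and~\ref{lem-subgroup-counting-function-lim-finite} to verify the convergence hypotheses, and then invoke Lemma~\ref{lem-analysis-lem} to turn the quotient of products into the product of quotients. The paper's proof is just a terser statement of exactly this argument.
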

\begin{proof}
    This follows immediately from Theorem~\ref{thm-counting-function-subgroup} and Lemma~\ref{lem-analysis-lem}, whose hypotheses are satisfied by Lemmas~\ref{lem-subgroup-counting-function-lim-finite} and~\ref{lem-subgroup-counting-function-lim-positive}.
\end{proof}
\begin{proof}[Proof of Theorem~\ref{thm-prop-subgroup-between-0-1}]
    For each finitely generated subgroup $\mathcal{A}\subseteq k^*$, Lemmas~\ref{lem-subgroup-counting-function-lim-positive} and~\ref{lem-subgroup-counting-function-lim-finite} tell us that 
    $$
    0 < \lim_{X\to\infty} \frac{N(X;\mathcal{A})}{X} < \infty. 
    $$
    It follows by the algebra of limits that 
    $$
    \Bigg(\lim_{X\to\infty}\frac{N(X;\mathcal{A})}{X}\Bigg) / \Bigg(\lim_{X\to\infty}\frac{N(X;\{1\})}{X}\Bigg) = \lim_{X\to\infty}\frac{N(X;\mathcal{A})}{N(X;\{1\})}.
    $$
    Lemma~\ref{lem-subgroup-counting-function-lim-positive} tells us that the left-hand side is a ratio of two positive numbers, so we have 
    $$
    \lim_{X\to\infty}\frac{N(X;\mathcal{A})}{N(X;\{1\})} > 0.
    $$
    Clearly 
    $$
    \lim_{X\to\infty}\frac{N(X;\mathcal{A})}{N(X;\{1\})} \leq 1,
    $$
    with equality if and only if 
    $$
    \lim_{X\to\infty}\frac{N(X;\mathcal{A})}{X}=\lim_{X\to\infty}\frac{N(X;\{1\})}{X}.
    $$
    By Lemma~\ref{lem-product-for-prop}, this equality holds if and only if $m_{\mathcal{A},\p} = m_{1,\p}$ for all $\p$, and this is the case if and only if $\Sigma_{\mathcal{A},\p} = \Sigma_{1,\p}$ for all $\p$, hence if and only if $\mathcal{A}\subseteq N_{L/k_\p}(L^*)$ for every $\p$ and every quartic \'etale algebra $L/k_\p$. 
    
    Suppose that this is the case. Taking $L$ to be the unramified quartic extension of $k_\p$ shows that $4\mid v_\p(\alpha)$ for all $\alpha \in \mathcal{A}$ and for each prime $\p$ of $k$. Let $T$ be the set of primes $\p$ of $k$ with $N(\p)$ odd. Let $\alpha \in \mathcal{A}$ and let $\p \in T$. Write $\pi$ for a uniformiser of $\co_{k_\p}$, so that $\alpha = \pi^ru$, where $4\mid r$ and $u \in \co_{k_\p}^*$. Taking $L$ to be a totally ramified quartic field extension of $k_\p$, Lemmas~\ref{lem-norm-group-(1^4)-1mod4} and \ref{lem-norm-group-(1^4)-3mod4} tell us that $u \in \co_{k_\p}^{*4}$. Therefore, $\alpha$ is in $k_\p^{*4}$ for all $\p \in T$, so \cite[Theorem~9.1.11]{neukirch-cohomology-of-nfs} tells us that $\alpha \in k^{*4}$. We have shown that 
    $$
    \lim_{X\to\infty} \frac{N(X;\mathcal{A})}{N(X;\{1\})} = 1 \implies \mathcal{A}\subseteq k^{*4}.
    $$
    The converse implication is trivial, so we are done. 
\end{proof}
\appendix
\section{Tables of Masses}
To use Tables~\ref{table-q-1mod4} and \ref{table-q-3mod4}, let $r = \ord_\p(\alpha)$ and choose $\pi \in \p\setminus \p^2$. Then set $u = \pi^{-r}\alpha$. Let 
$$
\bar{r} = r\pmod{4}, \quad \bar u = u\pmod{\p} \in \F_\p.
$$
Then $m_{\alpha,\p}$ is the entry in the table corresponding to $\bar{r}$ and the status of $\bar{u}$ as a fourth power or a square. 

To use Table~\ref{table-q-2}, write $\alpha = 2^ru$ for an odd integer $u$. Let 
$$
    \bar{r} = r\pmod{4},\quad \bar{u} = u\pmod{16}.
$$
Then $m_{\alpha,2}$ is the corresponding entry of the table. 
\begin{table}[H]
    \caption{$q \equiv 1\pmod{4}$}
    \label{table-q-1mod4}
    \begin{center}
        \begin{tabular}{|c||c|c|c|c|}
        \hline \diagbox{$\bar{u}$}{$\bar{r}$} & 0 & 1 & 2 & 3 \\ \hline\hline
        $\F_\p^{* 4}$& $\frac{{\left(q^{3} + q^{2} + 2 \, q + 1\right)} {\left(q - 1\right)}}{q^{4}}$ & $\frac{{\left(5 \, q^{3} + 8 \, q^{2} + 11 \, q + 2\right)} {\left(q - 1\right)}}{8 \, q^{4}}$ & $\frac{{\left(q^{2} + q + 2\right)} {\left(3 \, q + 1\right)} {\left(q - 1\right)}}{4 \, q^{4}}$ & $\frac{{\left(5 \, q^{3} + 8 \, q^{2} + 11 \, q + 2\right)} {\left(q - 1\right)}}{8 \, q^{4}}$ 
        \\
        \hline
        $\F_\p^{* 2} \setminus \F_\p^{* 4}$& $\frac{{\left(q^{2} + q + 2\right)} {\left(q - 1\right)}}{q^{3}}$ & $\frac{{\left(5 \, q^{3} + 8 \, q^{2} + 11 \, q + 2\right)} {\left(q - 1\right)}}{8 \, q^{4}}$ & $\frac{{\left(q^{2} + q + 2\right)} {\left(3 \, q + 1\right)} {\left(q - 1\right)}}{4 \, q^{4}}$ & $\frac{{\left(5 \, q^{3} + 8 \, q^{2} + 11 \, q + 2\right)} {\left(q - 1\right)}}{8 \, q^{4}}$ 
        \\
        \hline
        $\F_\p^* \setminus \F_\p^{* 2}$& $\frac{{\left(4 \, q^{2} + 4 \, q + 5\right)} {\left(q - 1\right)}}{4 \, q^{3}}$ & $\frac{{\left(5 \, q^{3} + 8 \, q^{2} + 11 \, q + 2\right)} {\left(q - 1\right)}}{8 \, q^{4}}$ & $\frac{{\left(3 \, q^{2} + 4 \, q + 6\right)} {\left(q - 1\right)}}{4 \, q^{3}}$ & $\frac{{\left(5 \, q^{3} + 8 \, q^{2} + 11 \, q + 2\right)} {\left(q - 1\right)}}{8 \, q^{4}}$ 
        \\
        \hline
        \end{tabular}\end{center}
\end{table}
\begin{table}[H]
    \caption{$q\equiv 3\pmod{4}$}
    \label{table-q-3mod4}
    \begin{center}
        \begin{tabular}{|c||c|c|c|c|}
        \hline \diagbox{$\bar{u}$}{$\bar{r}$} & 0 & 1 & 2 & 3 \\ \hline\hline
        $\F_\p^{* 2}$& $\frac{{\left(q^{3} + q^{2} + 2 \, q + 1\right)} {\left(q - 1\right)}}{q^{4}}$ & $\frac{{\left(5 \, q^{3} + 8 \, q^{2} + 11 \, q + 4\right)} {\left(q - 1\right)}}{8 \, q^{4}}$ & $\frac{{\left(3 \, q^{3} + 4 \, q^{2} + 7 \, q + 4\right)} {\left(q - 1\right)}}{4 \, q^{4}}$ & $\frac{{\left(5 \, q^{3} + 8 \, q^{2} + 11 \, q + 4\right)} {\left(q - 1\right)}}{8 \, q^{4}}$ 
        \\
        \hline
        $\F_\p^* \setminus \F_\p^{* 2}$& $\frac{{\left(4 \, q^{2} + 4 \, q + 5\right)} {\left(q - 1\right)}}{4 \, q^{3}}$ & $\frac{{\left(5 \, q^{3} + 8 \, q^{2} + 11 \, q + 4\right)} {\left(q - 1\right)}}{8 \, q^{4}}$ & $\frac{{\left(3 \, q^{2} + 4 \, q + 6\right)} {\left(q - 1\right)}}{4 \, q^{3}}$ & $\frac{{\left(5 \, q^{3} + 8 \, q^{2} + 11 \, q + 4\right)} {\left(q - 1\right)}}{8 \, q^{4}}$ 
        \\
        \hline
        \end{tabular}\end{center} 
\end{table}
\begin{table}[H]
    \caption{Table defining $m_{\alpha,2}$.}
    \label{table-q-2}
\begin{center}
    \begin{tabular}{|c||c | c | c | c|}
    \hline
    \diagbox{$\bar{u}$}{$\bar{r}$}& 0 & 1 & 2 & 3
    \\ \hline \hline
    1& $17/16$& $6523/8192$& $3791/4096$& $6523/8192$
    \\
    \hline
    3& $65/64$& $6523/8192$& $115/128$& $6523/8192$
    \\
    \hline
    5& $535/512$& $6523/8192$& $469/512$& $6523/8192$
    \\
    \hline
    7& $4159/4096$& $6523/8192$& $3679/4096$& $6523/8192$
    \\
    \hline
    9& $2175/2048$& $6523/8192$& $3791/4096$& $6523/8192$
    \\
    \hline
    11& $65/64$& $6523/8192$& $115/128$& $6523/8192$
    \\
    \hline
    13& $535/512$& $6523/8192$& $469/512$& $6523/8192$
    \\
    \hline
    15& $4159/4096$& $6523/8192$& $3679/4096$& $6523/8192$
    \\
    \hline
    \end{tabular} 
    \end{center}
\end{table}

\printbibliography
\end{document}